\documentclass{article}
\usepackage[utf8]{inputenc}
\usepackage{pb-diagram}
\usepackage{amsmath, amscd}
\usepackage{tikz-cd}
\usepackage{amsthm}
\usepackage{mathtools}
\usepackage{amsfonts}
\usepackage{amssymb}
\usepackage{graphicx}
\usepackage{xcolor}
\usepackage{hyperref}
\usepackage{soul}
\usepackage{imakeidx}

\newtheorem{theorem}{Theorem}[section]
\newtheorem{definition}{Definition}[section]

\newtheorem{lemma}[theorem]{Lemma}
\newtheorem{cor}[theorem]{Corollary}

\newtheorem{prop}[theorem]{Proposition}

\theoremstyle{remark}
\newtheorem{remark}[theorem]{Remark}

\newcommand{\be}{\begin{enumerate}}
\newcommand{\ee}{\end{enumerate}}
\newcommand{\beq}{\begin{equation}}
\newcommand{\eeq}{\end{equation}}

\newcommand{\Th}{\textrm{Th}}

\newcommand{\Lrings}{\mathcal{L}_{\text{rings}}}

\def\N{{\mathbb{N}}}
\def\Z{{\mathbb{Z}}}

\def\Q{{\mathbb{Q}}}
\def\Spec{\text{Spec}}

\def\MA{{\mathbb{A}}}
\def\MB{{\mathbb{B}}}

\def\D{{\mathcal{D}}}

\def\barA{\hat{A}}
\def\barB{\hat{B}}
\def\bara{\hat{a}}

\def\barx{\hat{x}}
\def\bary{\hat{y}}

\def\Ext{\text{Ext}}

{}
{}
{}

\title{Arbitrary models of the complete first-order theories of FDZ-rings }

\author{Mahmood Sohrabi \footnote{Stevens Institute of Technology.}} 
\date{}

\makeindex
\begin{document}

\maketitle

\begin{abstract}
In this paper, we study arbitrary models of the first-order theory of a  ring $A$ where the additive group $A$ is a finitely generated abelian group. Following an earlier paper by this author, Alexei G. Myasnikov and Francis Oger~\cite{MOS}, we call these rings the \emph{FDZ-rings} or \emph{FDZ-algebras}. The rings considered are not necessarily unitary, commutative, or associative. We provide criteria for such rings to be quasi finitely axiomatizable (QFA) or bi-interpretable with the ring of integers $\Z$. We shall also describe all rings elementarily equivalent to such a ring $A$ given certain constraints on $A$.
\end{abstract}

\tableofcontents

\maketitle

\section{Introduction}
In this paper, we extend the results of \cite{MOS} and continue the program of developing a structure theory for arbitrary models of finite dimensional rings over $\mathbb{Z}$ (FDZ-rings). Our motivation, at least partly, comes from the model theory of finitely generated nilpotent groups: via the various types of ``Mal'cev-like" correspondences between rings and nilpotent groups, understanding elementary equivalence classes on the ring side provides a route toward describing all groups elementarily equivalent to a given finitely generated nilpotent group. 

After recalling the relevant notions of first-order rigidity and quasi-finite axiomatizability, we focus on bi-interpretability with the ring of integers. We provide conditions that guaranty an FDZ-ring cannot be bi-interpretable with $\mathbb{Z}$, and we also offer sufficient conditions to ensure bi-interpretability. For the class of super tame FDZ-rings, we provide a complete description of arbitrary models of their complete first-order theory. Finally, we attempt such a characterization for all FDZ-rings whose ``maximal" scalar ring is bi-interpretable with the ring of integers $\Z$. These models will be called \emph{abelian deformations} of a tensor completion of $A$ over a ring elementarily equivalent to $\Z$. We describe these using certain symmetric 2-cocycles.

\subsection{First-order rigidity and quasi-finite axiomatizability}
We begin by fixing some notation and recalling some model-theoretic definitions (we refer to \cite{hodges} for details). For the remainder of this section, $\mathcal{L}$ is a countable language and $T$ is a complete $\mathcal{L}$-theory with infinite models.
\begin{definition}
A finitely generated $\mathcal{L}$-structure $\MA$ is called \emph{first-order rigid} if, for any finitely generated $\mathcal{L}$-structure $\MB$, first-order equivalence $\MA \equiv \MB$ implies isomorphism $\MA \simeq \MB$.
\end{definition}

\begin{definition}	Fix a finite signature. An infinite finitely generated  structure is Quasi Finitely Axiomatizable (QFA)\index{QFA model} if there exists a first-order sentence $\phi$ of the signature such that 
	\begin{itemize}
		\item $\MA\models \phi$
		\item If $\MB$ is a finitely generated structure in the same signature and $\MB\models \phi$ then $\MA\cong \MB$.
\end{itemize} \end{definition}

\begin{definition}
Suppose that structures $\mathbb A, \mathbb B$ in finite signatures are given, as well as
interpretations of $\mathbb A$ in~$\mathbb B$, and vise versa. Then an isomorphic copy~$\widetilde{\mathbb A}$ of~$\mathbb A$ can be defined in~$\mathbb A$ by ``decoding''~$\mathbb A$ from the copy of~$\mathbb B$ defined in~$\mathbb A$.
Similarly, an isomorphic copy $\widetilde{\mathbb B}$ of~$\mathbb B$ can be defined in~$\mathbb B$. An isomorphism $\Phi: {\mathbb A}\cong \widetilde{\mathbb A}$ can be viewed as a relation on~$\mathbb A$, and similarly for an isomorphism $\mathbb B \cong \widetilde{\mathbb B}$.
We say that $\mathbb A$ and $\mathbb B$ are \emph{bi-interpretable} (with parameters) if there
 exist isomorphisms that are first-order definable. \end{definition}
 
See (\cite{Nies2007}, Sec. 5) or \cite{Rich} for further details. For a careful and detailed study of various notions (weak and strong) of bi-interpretability, we refer the reader to~\cite{AKNS}. Here, we are most interested in bi-interpretability with the ring of integers $\Z$, where the above notions coincide.
\begin{theorem}(\cite{Nies2007}) A finitely generated structure $\mathbb A$ that is bi-interpretable with the ring of integers $\Z$ is QFA.\end{theorem}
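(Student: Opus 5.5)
The plan is to follow Nies' argument: encode the bi-interpretation into a single sentence $\phi$ true in $\MA$, together with a finite fragment of arithmetic strong enough to force any finitely generated model of $\phi$ to be isomorphic to $\MA$. Fix the data witnessing bi-interpretability with parameters $\bar p$ in $\MA$:
\begin{itemize}
\item formulas $\Gamma$ interpreting a copy $\Z^{\MA}$ of $\Z$ in $\MA$;
\item formulas $\Delta$ interpreting a copy of $\MA$ in $\Z$;
\item a definable isomorphism $\Phi(x,\bar y,\bar p)$ from $\MA$ onto the copy $\widetilde{\MA}$ of $\MA$ obtained by composing the two interpretations inside $\MA$.
\end{itemize}
Since $\MA$ is finitely generated, say by $\bar a$, the copy of $\MA$ inside $\Z$ is given by tuples of integers $\bar n$ coding $\bar a$. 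Every element of $\MA$ is then a term in $\bar a$, and the corresponding term codes are computable, hence definable in $\Z$, uniformly in a code for the term.

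Next I choose a finite set of axioms. Let $Q$ be a finite fragment of arithmetic, for instance Robinson's $Q$ together with the axioms of discretely ordered commutative rings, chosen so that every model of $Q$ has $\N$ as an initial segment and proves the needed facts about term-evaluation codes. The sentence $\phi$, with $\bar p$ existentially quantified, says the following.
\begin{itemize}
\item $\Gamma(\bar p)$ defines a structure satisfying $Q$.
\item $\Delta$, applied inside that structure, defines a structure in the signature, and $\Phi(\cdot,\cdot,\bar p)$ is an isomorphism from the ambient structure onto it.
\item The ambient structure is generated by the elements $\Phi$-corresponding to the codes of $\bar a$, in the following sense: every element $x$ is the value of some term $t$, coded by an element $c$ of the interpreted ring, with $\Phi(x)$ equal to the $\Delta$-evaluation of $c$ at the code of $\bar a$.
\end{itemize}
Clearly $\MA\models\phi$.

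Now suppose $\MB$ is finitely generated and $\MB\models\phi$, witnessed by parameters $\bar q$. Let $R=\Gamma(\MB,\bar q)$, a model of $Q$ containing a standard part $\N$. The main step is to show $R$ is standard, i.e.\ $R\cong\Z$. This uses finite generation of $\MB$ by some $\bar b$. Each element of $R$ is an equivalence class of tuples from $\MB$, hence definable from $\bar b,\bar q$ via $\Phi$ and the generation clause. Finite generation means each element of $\MB$ is a standard term in $\bar b$, so only countably many elements of $R$ arise.

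To exclude nonstandard elements I would argue as Nies does. Via $\Phi$, $\MB$ is isomorphic to the structure $\Delta(R)$, in which the codes of elements of $\MB$ are values of standard terms in finitely many fixed elements of $R$. In a nonstandard model the $\Delta$-structure contains elements coded by nonstandard term-codes $c$. The generation clause of $\phi$ says these equal $\Phi$-images of elements of $\MB$. Every such element is a standard term in $\bar b$, so there are only standard many distinct values. Moreover, the $\Delta$-evaluation map is injective on a suitable canonical-form set, which I would add to $\phi$, for example codes in normal form relative to the multiplication table of $\MA$. Together these force every code to be standard, so $R=\N\cup-\N\cong\Z$.

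This is the step I expect to be the main obstacle: choosing the finite axioms so that the "every element is a standard term" property transfers into standardness of $R$. If a direct injectivity argument fails, the fallback is to express in $\phi$ that $\Gamma(\bar p)$ is generated as a ring by $1$ and the elements coded by $\bar a$. Then $R$, being interpretable in the finitely generated $\MB$, would be a finitely generated model of $Q$. Finally I would invoke QFA of $\Z$, or the fact that a finitely generated discretely ordered ring satisfying the finite fragment is $\Z$, to conclude $R\cong\Z$.

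Once $R\cong\Z$, we have $\Delta(R)\cong\Delta(\Z)\cong\MA$, since $\Delta$ is the same interpretation and the parameters $\bar n$ become standard. Then $\Phi$ gives $\MB\cong\Delta(R)\cong\MA$, completing the proof.
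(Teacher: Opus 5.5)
Your proposal has a genuine gap at exactly the step you flag: you never show that $R=\Gamma(\MB,\bar q)$ is standard. (The paper itself only cites Nies for this statement, so I am comparing against his argument.)

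Countability of $\MB$, together with a bijection between the canonical codes in $R$ and $\Delta(R)\cong\MB$, gives no contradiction. Countable nonstandard models of any finite fragment exist, and nothing forces the codes to be standard. Each $\Phi(b_i)$ is the evaluation at the code of $\bar a$ of some $c_i\in R$. The $c_i$ may be nonstandard, and then the standard terms in $\bar b$ have nonstandard canonical codes as well. ``Only standard many distinct values'' says nothing beyond countability, so ``together these force every code to be standard'' is a non sequitur. Normal forms relative to a multiplication table also only make sense for rings; least codes would work in general, but they do not help either.

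The fallback fails too:
\begin{itemize}
\item A structure interpretable in a finitely generated structure need not be finitely generated: $\Q$ is interpretable in $\Z$.
\item The codes of $\bar a$ are standard integers, so ``$R$ is generated by $1$ and these codes'' just says $R=\Z$, which is not first-order.
\item Finitely generated discretely ordered rings need not be $\Z$; take $\Z[x]$ with $x$ infinite.
\end{itemize}

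The missing idea is an internal pigeonhole (diagonal) argument, carried out in the ambient structure. Let $k$ be the dimension of $\Gamma$. Let $\mathrm{ev}(m,c)$ be the arithmetically defined value in $\Delta(R)$ of the term coded by $m$, at the tuple of elements of $\Delta(R)$ coded by $c$. Sequence coding is needed because the number of generators of $\MB$ is not bounded in advance, while one sentence can only quantify over tuples of fixed length. Define $F_c(m_1,\ldots,m_k)$ as the element of $R$ with $\Gamma$-representative $(\Phi^{-1}(\mathrm{ev}(m_1,c)),\ldots,\Phi^{-1}(\mathrm{ev}(m_k,c)))$. This is definable uniformly in $c\in R$.

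Put the following into $\phi$:
\begin{itemize}
\item the discretely-ordered-ring axioms for the definable order;
\item the recursive clauses for $\mathrm{ev}$ and for sequence coding;
\item the sentence: for all $c\in R$ and all $N\ge 1$ there is $x\in R$ with $0\le x\le N^k$ and $x\ne F_c(\bar m)$ for every $\bar m\in[0,N)^k$.
\end{itemize}
The last sentence holds in $\MA$ by finite pigeonhole, since $\Gamma(\MA,\bar p)\cong\Z$.

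In $\MB$, take $c$ coding $\Phi(\bar b)$. Every element of $\MB$ is $s(\bar b)$ for a standard term $s$, and $\Phi$ is an isomorphism. Also $\mathrm{ev}$ is correct on standard codes, by external induction on terms. Hence every $r\in R$ equals $F_c(\bar m)$ for some standard $\bar m$.

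Now suppose some $N\in R$ exceeded all standard integers. The $x$ supplied by the sentence would equal $F_c(\bar m_0)$ with $\bar m_0$ standard. Then $\bar m_0\in[0,N)^k$, a contradiction. Thus $R\cong\Z$, and your final paragraph finishes the proof, provided $\Delta$ is taken parameter-free (possible, since every integer is $0$-definable). With this argument, neither your generation clause nor the canonical forms are needed.
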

\subsection{Preliminaries on FDZ-rings}

We call a \emph{ring} any structure $(A,+,\cdot)$\ such that $(A,+)$ is an
abelian group and $(x,y)\mapsto x\cdot y$\ is bilinear. A ring is said to be a 
\emph{scalar ring} if it is commutative, associative, and unitary. We denote
by $%
\mathbb{N}
^{\ast }$\ the set of strictly positive integers.

The reader is referred to [2] for the concepts of mathematical logic that
are used here. For each language $\mathcal{L}$, two $\mathcal{L}$-structures 
$\mathbb A$ and  $\mathbb B$ are said to be \emph{elementarily equivalent} if they satisfy the same
first-order sentences.

Any language $\mathcal{L}$ consists of relation symbols, constant symbols
, and function symbols. The $\mathcal{L}$-structures can be interpreted in a
language without function symbols, replacing each function with the relation
associated with its graph. The notions of elementary equivalence for the two
languages coincide.

For a pair of structures, for example, a scalar ring $R$ and a $R$-module $M$, it
is convenient to use a language without function symbols and to interpret each
operation, for example, the action of $R$ on $M$, as a relation on the
disjoint union of the two structures. We obtain the same notion of
elementary equivalence if we interpret each pair as a multi-based model,
possibly with function symbols (see [3]).

According to the definition given by Andr\'{e} Nies in [6], for each finite
language $\mathcal{L}$ with function symbols, a finitely generated $\mathcal{%
L}$-structure is said to be \emph{quasi-finitely axiomatizable} if it is
characterized among finitely generated $\mathcal{L}$-structures by one
sentence.

For each language $\mathcal{L}$ containing $+$ and each $\mathcal{L}$
-structure $M$ such that $(M,+)$ is an abelian group, we say that $M$ is 
\emph{finite dimensional over }$%
\mathbb{Z}
$ or \emph{FDZ}\ if $(M,+)$\ is finitely generated. We say that $M$ is \emph{%
FDZ-finitely axiomatizable} if it is characterized among FDZ $\mathcal{L}$%
-structures by one sentence.

We note that for each structure $M$ and each finite sequence $\overline{x}$
that generates $M$ (resp. $(M,+)$), the structure $(M,\overline{x})$ is
quasi-finitely (resp. FDZ-finitely) axiomatizable if and only if there
exists a formula $\varphi (\overline{u})$ satisfied by $\overline{x}$ in $M$
such that for each finitely generated (resp. FDZ) structure $N$ and each $%
\overline{y}$\ which satisfies $\varphi $ in $N$, the map $\overline{x}%
\rightarrow \overline{y}$ induces an isomorphism from $M$ to $N$.\bigskip

In~\cite{MOS} the following statements have been proved:

\begin{theorem}\label{MOS-1} Consider an FDZ scalar ring $R$ and a finite
sequence $\overline{r}$\ which generates $(R,+)$. Then $(R,\overline{r})$ is
FDZ-finitely axiomatizable.\end{theorem}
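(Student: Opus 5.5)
We propose to prove Theorem~\ref{MOS-1} by writing down an explicit \emph{presentation formula} $\varphi(\overline{u})$ and then arguing by elementary abelian-group theory that, among FDZ structures, it pins down $(R,\overline{r})$. Write $\overline{r}=(r_1,\dots,r_m)$. Fix three pieces of data from $R$:
\begin{itemize}
\item the structure constants $r_ir_j=\sum_k c_{ijk}r_k$ with $c_{ijk}\in\Z$;
\item a finite generating set $v_1,\dots,v_s$ of the relation lattice $K=\{a\in\Z^m:\sum a_ir_i=0\}$;
\item an integer expression $1_R=\sum e_ir_i$ of the unit.
\end{itemize}
Let $\varphi(\overline{u})$ be the conjunction of the following.
\begin{enumerate}
\item[(a)] The universal axioms saying the structure is a commutative, associative ring with unit $\sum e_iu_i$.
\item[(b)] The multiplication table $u_iu_j=\sum_k c_{ijk}u_k$.
\item[(c)] The relations $\sum_i v_{\ell,i}u_i=0$ for $\ell=1,\dots,s$.
\item[(d)] Finitely many ``local'' counting sentences, described below.
\end{enumerate}

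If $N$ is FDZ and $\overline{y}$ satisfies (a)--(c), then the map $r_i\mapsto y_i$ extends to a ring homomorphism $f:R\to N$, because (b) and (c) make it well defined and multiplicative. Its image is the subgroup $S=\langle\overline{y}\rangle$. It remains to force two things: that $f$ is injective and that $S=N$. Both are statements about the inclusion $S\le N$ of finitely generated abelian groups, and neither is directly first order. So the strategy is to reduce them to finitely many statements modulo powers of primes, which are first order. For a fixed modulus $q$, the sentence ``every $x$ is congruent mod $qN$ to some $\sum a_iy_i$ with $0\le a_i<q$, and exactly $|R/qR|$ of these are pairwise incongruent mod $qN$'' is first order. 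It expresses $S+qN=N$ together with $|N/qN|=|R/qR|$.

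For (d) we take finitely many $q$: for each prime $p$ dividing $|\mathrm{Tor}(R)|$, a power $p^{k}$ exceeding the exponent of the $p$-part of $\mathrm{Tor}(R)$, together with one auxiliary prime $p_0\nmid|\mathrm{Tor}(R)|$. For such a $q$ we have $|R/qR|=|\mathrm{Tor}(R)\otimes\Z/q|\cdot q^{\mathrm{rk}R}$. Counting with $q=p_0$ then gives $\mathrm{rk}N=\mathrm{rk}R$ and $p_0\nmid|\mathrm{Tor}N|$. Combined with $S+p_0N=N$, this gives $\mathrm{rk}\,S=\mathrm{rk}\,N$, so $\ker f$ is finite, hence contained in $\mathrm{Tor}(R)$. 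The prime-power counts then show $f$ induces bijections $R/p^kR\to N/p^kN$, which forces $\ker f\cap \mathrm{Tor}(R)_p=0$ for each relevant $p$. Hence $f$ is injective, and $N/S$ is a finite group of order prime to $p_0$ and to all the chosen $p$.

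The main obstacle is the last step: excluding a nontrivial finite quotient $N/S$ whose order involves primes we did not control. Here we will use the ring structure rather than the group alone. Since $S\cong R$ is a unital subring of $N$ of finite index $d$, we get $dN\subseteq S$, so $N$ is an overring of $R$ inside $R\otimes\Q$ contained in $\frac1d R$. The idea is to add to $\varphi$ a sentence excluding every element of the form $x$ with $\ell x=s\in S$, $x\notin S$, where $\ell$ is prime. Because $s$ ranges over an infinite set, this cannot be said naively. Instead we want to use the finitely many ring-theoretic constraints on such overrings. First try: an overring $N\supsetneq R$ of finite index in $\frac1dR$ must contain an element $x\notin R$ with $\ell x\in R$ and $x^2$, $xr_i$ expressible back in terms of $x$ and $R$. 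From this we hope to produce a uniform bound on the primes $\ell$ that can occur (those dividing the discriminant or conductor data of $R$). That would make finitely many extra prime-local sentences of the above type suffice. If a uniform bound cannot be obtained this way, the fallback is to use the definability of $\Z\cdot 1$ (or of a finite-index subring) inside FDZ scalar rings, which lets one quantify over integer coefficients and express $N=S$ directly. We expect this step to carry most of the work.
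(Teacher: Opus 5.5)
The paper does not prove this statement itself; it is imported from \cite{MOS}. So your proposal has to stand on its own, and as written it does not. Apart from the step you flag as open, the step you treat as settled is also wrong.

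\textbf{Injectivity.} The count at $p_0$ only gives $\dim_{\mathbb{F}_{p_0}}(N/p_0N)=\mathrm{rk}\,R$. The left side is $\mathrm{rk}\,N$ \emph{plus} the number of $p_0$-primary cyclic summands of $N$, so $\mathrm{rk}\,N=\mathrm{rk}\,R$ does not follow. For example, take $R=\Z$, $N=\Z/p_0\Z$, $y=1$. Choosing the moduli more cleverly does not help. If $R$ is infinite and $M$ is the least common multiple of the moduli used in (d), then $N=R/MR$ with the images of $\overline{r}$ satisfies (a)--(d), because $N/qN\cong R/qR$ for every $q\mid M$. Yet $N$ is finite. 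Finitely many congruence conditions never see the rank, so injectivity is just as non-local as surjectivity.

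\textbf{Surjectivity.} Your reduction to overrings of $R$ inside $R\otimes\Q$ ignores torsion at primes you do not control. For $R=\Z$, the ring $N=\Z\times\Z/\ell\Z$ with $y=(1,1)$ satisfies (a)--(d) for every prime $\ell\neq p_0$, and $[N:S]=\ell$.

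Even for torsion-free $N$ there is no uniform bound on the primes. Take $R=\Z[\varepsilon]/(\varepsilon^2)$ and $\overline{r}=(1,\varepsilon)$. The tuple $(1,\ell\varepsilon)$ in $N=R$ satisfies (a)--(d) for every prime $\ell\neq p_0$ and generates a subgroup of index $\ell$. The reason is that $\varepsilon/\ell$ is nilpotent, hence integral, so for non-reduced $R$ there is no conductor or discriminant bound.

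Your fallback fails on the same example. $\Z\cdot 1$ is not definable in $\Z[\varepsilon]/(\varepsilon^2)$, even with parameters. If it were, writing $x=s+t\varepsilon$ with $s,t\in\Z\cdot 1$ would give a bi-interpretation with $\Z$. That contradicts Theorem~\ref{AKNS1}, since no $d\geq 1$ kills the nilradical $\Z\varepsilon$.

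\textbf{What is missing.} You need a global device that ties the additive group to the multiplication, uniformly over all FDZ models of the formula, and that expresses generation over rings interpretable in $N$ rather than over $\Z$. In the last example, the first-order condition $\forall z\,(z^2=0\rightarrow \exists a\; z=a\,u_2)$ does exclude $(1,\ell\varepsilon)$. A general argument needs something of this kind, for instance working along idempotents and along the nilradical filtration. The proposal does not supply it, and without it the theorem is not proved.
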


\begin{theorem} \label{MOS-2} Consider an FDZ scalar ring $R$, an FDZ $R$%
-module $M$ and some finite sequences $\overline{r}$\ and $\overline{x}$\
which generate $(R,+)$ and $(M,+)$. Then $(R,\overline{r},M,\overline{x})$
is FDZ-finitely axiomatizable.\end{theorem}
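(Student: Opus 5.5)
To prove Theorem~\ref{MOS-2}, I would write down a single formula $\varphi(\overline{u},\overline{v})$ satisfied by $(\overline{r},\overline{x})$ in $(R,M)$ that pins down the pair up to isomorphism among FDZ pairs. I would build it as a conjunction of the formula supplied by Theorem~\ref{MOS-1} for $(R,\overline{r})$ with finitely many module-side clauses.

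Let $\overline{r}=(r_1,\dots,r_m)$ and $\overline{x}=(x_1,\dots,x_n)$. Since $(M,+)$ is a finitely generated abelian group, it has a finite presentation on $x_1,\dots,x_n$. Concretely, the relation module $K=\{k\in\Z^n:\sum k_jx_j=0\}$ is finitely generated by vectors $k^{(1)},\dots,k^{(p)}$. The action is then determined by finitely many structure constants $r_i x_j=\sum_l c_{ijl}x_l$ with $c_{ijl}\in\Z$.

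The formula $\varphi(\overline{u},\overline{v})$ would be the conjunction of the following clauses:
\begin{itemize}
\item the formula $\psi(\overline{u})$ from Theorem~\ref{MOS-1};
\item the sentences saying that $M$ is a unitary $R$-module over the ring sort;
\item $\sum_j k^{(q)}_j v_j=0$ for each $q$;
\item $u_i v_j=\sum_l c_{ijl} v_l$ for all $i,j$;
\item a clause saying that $\overline{v}$ additively generates $M$ and that the only relations are those in $K$.
\end{itemize}

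Now let $(S,N)$ be an FDZ pair with $(\overline{s},\overline{y})$ satisfying $\varphi$. Theorem~\ref{MOS-1} gives an isomorphism $R\to S$ with $\overline{r}\mapsto\overline{s}$. The map $\sum k_jx_j\mapsto\sum k_jy_j$ is then well defined because of the relation clauses, and it is compatible with the action because of the structure-constant clauses. It remains to show that this map is surjective and injective.

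\textbf{Surjectivity.} The clause "$\overline{v}$ additively generates $M$" is not first-order as it stands, since it quantifies over all of $\Z^n$. I would replace it by a finite substitute, as is presumably done in the scalar case of Theorem~\ref{MOS-1}. The idea is to choose a finite-index subgroup and express generation modulo it: for a suitable integer $d$, say that every element of $N$ is congruent modulo $dN$ to one of the finitely many combinations $\sum a_jv_j$ with $0\le a_j<d$. I would also include the first-order statement $|N/dN|=|M/dM|$. Since $N$ is FDZ, generation modulo $dN$ for enough primes $d$ forces $\langle\overline{y}\rangle$ to have finite index prime to each such $d$. If I can also bound the index, this gives equality. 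The way I would bound it is to use the interpretation, inside the pair, of $\Z$-like structure, namely the image of $\Z$ in $R$ or its torsion and free ranks, to compare the ranks and torsion of $N$ and $M$.

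\textbf{Injectivity.} The clause "no other relations" fails to be first-order for the same reason. I would handle it by splitting $M$ as torsion plus free part. I would state the exact torsion of $N$ using finitely many counting sentences of the form $|\{v: dv=0\}|=c_d$. Then I would state that the free rank matches, for instance via $|N/pN|$ for a prime $p$ not dividing the torsion.

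The main obstacle is exactly this translation of "generation" and "freeness modulo $K$" into first-order statements that suffice among FDZ structures. I would first try the counting approach above: torsion via $|\{dv=0\}|$, rank via $|N/pN|$, and generation via congruences modulo $p$ together with the index argument. Following the strategy of Theorem~\ref{MOS-1}, I would reduce to the additive-group case, where FDZ-finite axiomatizability of $(\Z^k\oplus T,\overline{x})$ follows from these counting sentences.
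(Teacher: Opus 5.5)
\textbf{Gap: the surjectivity step.} Counting sentences and congruences modulo finitely many $d$ cannot force $\overline{y}$ to generate $N$, and the fallback you propose for this step is false.

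Take $R=M=\Z$, $\overline{r}=(1)$, $\overline{x}=(1)$. Let the pair be $(S,N)=(\Z,\Z)$ with $\overline{s}=(1)$ and $\overline{y}=(q)$, where $q$ is a prime larger than every modulus occurring in your formula. This pair satisfies every clause you wrote down:
\begin{itemize}
\item $\psi(1)$ holds, and the structure-constant clause $1\cdot q=q$ holds;
\item $N$ has the same torsion counts and the same $|N/pN|$ as $M$;
\item $q$ generates $N/dN$ for every $d$ prime to $q$.
\end{itemize}
Yet $1\mapsto q$ is not onto. Since $N\cong M$ here, no comparison of ranks or torsion can bound the index $[N:\langle q\rangle]=q$.

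The same example refutes your closing claim that $(\Z^k\oplus T,\overline{x})$ is FDZ-finitely axiomatizable as a pure abelian group. By Szmielew/Baur--Monk, every formula $\varphi(u)$ true of $1$ in $(\Z,+)$ is a Boolean combination of conditions $n\mid mu$. Hence it is also true of every sufficiently large prime. Reducing to the additive-group case is therefore hopeless; the ring sort has to be used essentially.

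\textbf{The missing idea.} Generation \emph{is} first-order once you quantify over the scalar sort. Add the clause $\forall z\,\exists\lambda_1,\ldots,\lambda_n\,\bigl(z=\sum_j\lambda_j v_j\bigr)$, with the $\lambda_j$ ranging over the ring. In $(S,N)$, Theorem~\ref{MOS-1} gives $S=\langle\overline{s}\rangle$ additively. Your structure-constant clauses make $\langle\overline{y}\rangle$ closed under each $s_i$, hence an $S$-submodule. So $N=\sum_j S y_j=\langle\overline{y}\rangle$. Equivalently, you can apply Theorem~\ref{MOS-1} directly to the idealization $R\ltimes M$, which is an FDZ scalar ring additively generated by $\overline{r},\overline{x}$.

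\textbf{Injectivity.} Once surjectivity is established, your counting plan works, with one correction. You cannot state ``the exact torsion of $N$'' with finitely many sentences, but you do not need to. Write $M\cong\Z^k\oplus T$ with $T$ finite of exponent $e$, and fix a prime $p\nmid|T|$. Require:
\begin{itemize}
\item $\{v: pv=0\}=\{0\}$;
\item $|N/pN|=p^k$;
\item $|\{v: ev=0\}|=|T|$.
\end{itemize}
The first two give $\mathrm{rank}\,N=k$. So the kernel $L$ of $M\to N$ is finite, hence $L\subseteq T$. Then $\{v\in N: ev=0\}=T/L$, and the third condition forces $L=0$.
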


\begin{theorem} \label{MOS-3} Consider an FDZ scalar ring $R$, an FDZ ring 
$A$ equipped with a structure of $R$-module such that the multiplication of $%
A$ is $R$-bilinear, and some finite sequences $\overline{r}$\ and $\overline{%
x}$\ that generate $(R,+)$ and $(A,+)$. Then $(R,\overline{r},A,\overline{x}%
)$ is FDZ-finitely axiomatizable.\end{theorem}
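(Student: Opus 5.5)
We need a single formula $\varphi(\overline{u},\overline{v})$, satisfied by $(\overline{r},\overline{x})$ in the two-sorted structure $(R,A)$, such that for any FDZ two-sorted structure $(S,B)$ and tuples $\overline{s},\overline{y}$ satisfying $\varphi$, the maps $\overline{r}\mapsto\overline{s}$ and $\overline{x}\mapsto\overline{y}$ induce an isomorphism $(R,A)\cong(S,B)$. Here $S$ is an FDZ scalar ring, $B$ is an FDZ $S$-module, and the multiplication on $B$ is $S$-bilinear. The plan is to take the formula provided by Theorem~\ref{MOS-2} for the pair $(R,\overline{r},M,\overline{x})$ with $M=(A,+)$ the underlying module, and add finitely many sentences describing the ring multiplication of $A$ on the generators. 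Let $\psi(\overline{u},\overline{v})$ be the formula given by Theorem~\ref{MOS-2}. Write $\overline{x}=(x_1,\dots,x_n)$ and $\overline{r}=(r_1,\dots,r_m)$. Since $\overline{x}$ generates $(A,+)$, each product can be written as
\[
x_i\cdot x_j=\sum_{k} c_{ijk}\,x_k, \qquad c_{ijk}\in\Z .
\]
Let $\chi(\overline{v})$ be the conjunction of the finitely many equalities $v_i\cdot v_j=\sum_k c_{ijk}v_k$, where integer multiples are written as iterated sums or negations. Set $\varphi=\psi\wedge\chi\wedge\theta$. Here $\theta$ is the sentence saying that the multiplication on the second sort is biadditive and $S$-bilinear:
\[
\forall s\,\forall a\,\forall b\;\bigl((sa)b=s(ab)=a(sb)\bigr),
\]
together with the distributive laws. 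Alternatively, $\theta$ can be dropped if $(S,B)$ is already assumed to be in the class in question.

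To verify the property, take an FDZ structure $(S,B)$ with $(\overline{s},\overline{y})$ satisfying $\varphi$. By Theorem~\ref{MOS-2}, the assignment $\overline{r}\mapsto\overline{s}$, $\overline{x}\mapsto\overline{y}$ induces an isomorphism $f:R\to S$ of scalar rings and an $f$-semilinear additive bijection $g:A\to B$, i.e.\ $g(ra)=f(r)g(a)$. It remains to check that $g$ preserves multiplication. Take $a=\sum\alpha_i x_i$ and $b=\sum\beta_j x_j$ with $\alpha_i,\beta_j\in\Z$; such representations exist because $\overline{x}$ generates $(A,+)$. By biadditivity in both $A$ and $B$ together with $\chi$,
\[
g(ab)=\sum\alpha_i\beta_j\,g(x_ix_j)=\sum\alpha_i\beta_j c_{ijk}\,y_k=\sum\alpha_i\beta_j\,y_iy_j=g(a)g(b).
\]
Hence $g$ is a ring isomorphism compatible with $f$. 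Note that $R$-bilinearity is not needed for this step; only $\Z$-bilinearity is used, and that holds by the definition of ``ring''.

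The main obstacle is to make sure the class quantified over in Theorem~\ref{MOS-2} matches what $(S,B)$ provides. Specifically, $(S,B)$ must be an FDZ two-sorted structure in which $\psi$ forces $S$ to be a scalar ring and $B$ an $S$-module, and the added multiplication symbol on the second sort must not interfere with that conclusion. If Theorem~\ref{MOS-2} is stated only for the reduct to the language without the multiplication on $B$, we apply it to the reduct, which is still FDZ since the underlying groups are unchanged, and then recover multiplication as above. The only other point is that $\chi$ is genuinely first order with parameters the tuple $\overline{v}$; this holds because the coefficients $c_{ijk}$ are fixed integers. If one also wants the conclusion that $B$'s multiplication is $S$-bilinear as part of the characterization, this follows automatically: $g$ transports the $R$-bilinearity of $A$'s multiplication.
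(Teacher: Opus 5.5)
The paper gives no proof of this statement. It is quoted from \cite{MOS} together with Theorems~\ref{MOS-1} and~\ref{MOS-2}, so there is no in-paper argument to compare yours against.

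Your derivation from Theorem~\ref{MOS-2} is correct. Applying that theorem to the reduct of $(S,B)$ to the scalar-ring/module language yields the isomorphism $f$ and the $f$-semilinear bijection $g$. The multiplication table $\chi$, with its fixed integer structure constants, then forces $g(ab)=g(a)g(b)$, using biadditivity on both sides.

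You are right to keep the distributive laws in $\theta$. An FDZ structure in the expanded language is only required to have $(B,+)$ a finitely generated abelian group. Without biadditivity in $B$, a multiplication could agree with the table on generator pairs and still differ elsewhere, so the expansion of $g(a)g(b)$ would fail.

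The $S$-bilinearity clause in $\theta$ is harmless but unnecessary. As you note, the argument never uses $R$-bilinearity, so once that clause is dropped it proves slightly more than stated.

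One small technical addition: if the pair is encoded relationally, as the paper suggests, add the clause saying the multiplication relation on the second sort is the graph of a total function. This ensures $\chi$ and the distributive laws have their intended meaning.
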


For each ring $(A,+,.)$, we consider the two-sided ideals:
\begin{itemize}
\item $Ann(A)=\{x\in R\mid xy=yx=0$ for each $y\in A\}$,

\item $A^{2}=\{x_{1}y_{1}+\cdots +x_{n}y_{n}\mid n\in 
\mathbb{N}
$ and $x_{1},y_{1},\ldots ,x_{n},y_{n}\in A\}$,
\item $Is(I)=\{x\in A\mid nx\in I$ for some $n\in 
\mathbb{N}
^{\ast }\}$ for each two-sided ideal $I$ of $A$,
\item $\Delta(A)=Is(A^2)$
\item$K(A)=Ann(A)+\Delta(A)$ 
\item $L(A)=Is(Ann(A)+A^{2})$
\item $M(A)=A/L(A)$ and 
\item $N(A)=L(A)/K(A)$.
\end{itemize}
We say that $A$\ is \emph{regular} if $K(A)=L(A)$. Then, for each additive subgroup $A_0\subset
A $ such that $Ann(A)=A_0\oplus (Ann(A)\cap \Delta(A))$, there exists a subring $A_f
\subset A $ containing $\Delta(A)$ such that $A=A_f\oplus A_0$. Such a subgroup $A_0$ is called an \emph{addition} of $A$, and $A_f$ is called the corresponding \emph{foundation} of $A$. We say that $A$\ is \emph{tame} if $Ann(A)\subset \Delta(A)$.

If $A$ is not regular and an addition $A_0$ exists, the corresponding $A_f$ is not necessarily a subring. In that case, we refer to the quotient $A_f=A/A_0$ as the foundation $A_f$ of $A$ corresponding to $A_0$.

The following result is a generalization of Theorem~\ref{MOS-1}:

\begin{theorem}\label{MOS-4} Consider a tame FDZ ring $A$ and a finite
sequence $\overline{x}$\ that generates $(A,+)$. Then $(A,\overline{x})$ is
FDZ-finitely axiomatizable.\end{theorem}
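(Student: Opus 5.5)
The plan is to reduce to Theorem~\ref{MOS-3}, in the style of the proof of Theorem~\ref{MOS-1} in \cite{MOS}: inside $A$ we define a scalar ring $R$ acting on $A$, with the multiplication of $A$ being $R$-bilinear, and we use tameness to control the annihilator so that $R$ carries all of the ``$\Z$-like'' information. Concretely, take $R$ to be the largest scalar ring of bilinear-compatible endomorphisms. This is the set of pairs $(\phi,\psi)$ of additive maps of $A$ with $\phi(x)y=x\psi(y)$, or more simply the centroid-type ring $C=\{f\in \mathrm{End}(A,+): f(xy)=f(x)y=xf(y)\}$ restricted to $A^2$, modulo the appropriate kernel. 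We then show that $R$ is an FDZ scalar ring which is interpretable in $(A,\overline{x})$ by formulas with parameters $\overline{x}$.

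The first step is interpretability. An element $f$ of the scalar ring acting on $A^2$ is determined by the tuple $(f(x_ix_j))_{i,j}$, since the products $x_ix_j$ generate $(A^2,+)$. So $R$ can be coded as a definable set of tuples in $A^{n^2}$. The conditions ``the tuple extends to an additive map on $A^2$ compatible with multiplication'' are expressible by finitely many equations, because the relations among the generators $x_ix_j$ of $A^2$ form a finitely generated group, and the linear relations together with the multiplication table of $\overline{x}$ are fixed by a finite set of sentences. Tameness is used here: $Ann(A)\subset\Delta(A)$, so an element of $A$ is determined modulo torsion by its products with the generators together with its image in $\Delta(A)$. Hence the action of scalars extends from $\Delta(A)$ to all of $A$ up to a finite, definable ambiguity.

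The second step is to write a formula $\varphi(\overline{u})$. It will say that $\overline{u}$ satisfies the multiplication table and the finitely many additive relations of $\overline{x}$, that $\overline{u}$ generates the group, and it will include the relativized FDZ-axiomatizing formula of Theorem~\ref{MOS-3} for the pair $(R,A)$ with chosen generators $\overline{r}$ of $(R,+)$ interpreted as tuples. Expressing that $\overline{u}$ generates the group requires care, since ``generates'' is not first order. Following \cite{MOS}, we instead state that $A/mA$ is generated by $\overline{u}$ for a suitable fixed $m$ and that the torsion has bounded, prescribed shape. In an FDZ structure $N$, these conditions force $(N,+)$ to be generated by $\overline{u}$, by the same rank-counting argument as in Theorem~\ref{MOS-1}.

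The main obstacle is the step that passes from $R$-structure on $\Delta(A)$ to all of $A$. This is exactly where non-tame rings fail, because an addition $A_0$ inside $Ann(A)$ is invisible to the multiplication and so cannot be pinned down. Under tameness, every $a\in A$ has $ka\in A^2$ for some fixed $k$ up to annihilator, and the annihilator lies in $\Delta(A)$. I would first try to show that $A/\Delta(A)$ is detected by the action $a\mapsto(ax_i,x_ia)_i$, which is injective modulo a finite group. This would give that any FDZ $N\models\varphi(\overline{y})$ has the map $\overline{x}\mapsto\overline{y}$ inducing an isomorphism: first on $\Delta$ via Theorem~\ref{MOS-3}, then on the quotient via the finite data recorded in $\varphi$.
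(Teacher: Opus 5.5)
The paper does not prove this statement; it quotes it from \cite{MOS}. Judged on its own, your argument has genuine gaps.

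The central gap is the reduction to Theorem~\ref{MOS-3}. That theorem needs an FDZ scalar ring acting on $A$ itself, with the action part of the interpreted structure and the multiplication $R$-bilinear. What is actually interpretable is $P(f_A)$ (Proposition~\ref{myasnik:bilinT}, Corollary~\ref{P(f)-interpret:cor}). Its action is defined through a complete system of the non-degenerate map $f_A$, so it lives on $\hat{A}=A/Ann(A)$ and on $A^2$. On elements of $A$ it is determined only modulo $Ann(A)$. (Coding $f$ by the tuple $(f(x_ix_j))_{i,j}$ does not make the action definable, since evaluating at an arbitrary element needs its integer coordinates.)

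You concede that the action extends to $A$ only ``up to a finite ambiguity''. That is not a module structure, so Theorem~\ref{MOS-3} cannot be invoked, and the ambiguity is not finite either. Take the Heisenberg ring with additive basis $x,y,z$, $xy=-yx=z$, and all other products of basis elements $0$. It is tame, with $Ann(A)=A^2=\Delta(A)=\Z z$ infinite, and the scalar ring in question (either $P(f_A)$ or your centroid restricted to $A^2$) is just $\Z$. By the argument in the proof of Theorem~\ref{thm:main2}, $\langle x\rangle$ is not definable with parameters, so no interpreted copy of $\Z$ can act definably on $A$ at all.

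The same example refutes your paraphrase of tameness. There is no fixed $k$ with $kA\subseteq A^2+Ann(A)$; that would mean $L(A)=A$, whereas here $A/(A^2+Ann(A))\cong\Z^2$. Also, $a\mapsto(ax_i,x_ia)_i$ has kernel exactly $Ann(A)$, which need not be finite. So the last paragraph of your plan rests on false premises.

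The generation clause also fails. Additive conditions such as $N=\langle\bar{u}\rangle+mN$ for a fixed $m$, plus finitely many torsion constraints, cannot force $\bar{u}$ to generate an FDZ group. For instance, $(\Z\oplus\Z/p\Z,(1,0))$ satisfies every such condition true in $(\Z,1)$ once $p$ is large. An ultraproduct over the primes is $(\Z^{*}\oplus V,1)\cong(\Z^{*},1)\equiv(\Z,1)$, where $V$ is a $\Q$-vector space absorbed by the divisible part of $\Z^{*}$. So generation must come from the interpreted ring, not from rank counting.

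Your instinct to relativize a theorem of \cite{MOS} is right, but it must be applied where the ring really acts. Relativize Theorem~\ref{MOS-2} to $(P(f_A),\hat{A})$ with generators $\hat{\bar{x}}$ and to $(P(f_A),A^2)$ with generators $(x_ix_j)_{i,j}$. Include $\Phi\wedge\Psi$ from Lemma~\ref{lem:ft}, existentially quantify codes of generators of $(P(f_A),+)$, and add the additive relations and multiplication table of $\bar{x}$. Then use tameness in the form $k\,Ann(A)\subseteq A^2$, where $k$ is the exponent of the finite group $\Delta(A)/A^2$.

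For an FDZ ring $N$ and $\bar{y}$ satisfying this formula, $x_i\mapsto y_i$ gives a ring homomorphism $h:A\to N$. Since $\hat{N}=\langle\hat{\bar{y}}\rangle$, we get $N=\langle\bar{y}\rangle+Ann(N)$. Hence $h$ induces $\hat{h}$, which is an isomorphism, so $\ker h\subseteq Ann(A)$; moreover $h|_{A^2}$ is injective.

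\emph{Injectivity.} From the above, $k\ker h\subseteq\ker h\cap A^2=0$. So $\ker h$ lies in the finite group $\{a\in Ann(A): ka=0\}$. Its nonzero elements $t(\bar{x})$ are excluded by finitely many clauses $t(\bar{u})\neq 0$.

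\emph{Surjectivity.} Add the sentences expressing $k\,Ann\subseteq\Theta$ and $\forall z\,(\Theta(kz)\rightarrow\bigvee_j\Theta(z-t_j(\bar{u})))$, both true in $A$, where the $t_j(\bar{x})$ represent $\{a: ka\in A^2\}/A^2$. Since $N^2$ is generated by the $y_iy_j$, these give $Ann(N)\subseteq\langle\bar{y}\rangle$, hence $N=\langle\bar{y}\rangle$.
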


In contrast to the results above, we have the following:

\begin{theorem} For any FDZ rings $A$ and $B$ and each $k\in 
\mathbb{N}\setminus \{0\}
$ which is divisible by $\left| L(B)/K(B)\right| $, the following
properties are equivalent:
\be 
\item $A$ and $B$ are elementarily equivalent;

\item there exists an injective homomorphism $f:A\rightarrow B$ that
induces some isomorphisms from $Is(A^{2})$ to $Is(B^{2})$ and from $%
A/Ann(A)\ $to $B/Ann(B)$, such that $\left| B:f(A)\right| $ is finite
and prime to $k$;

\item There exists an isomorphism $%
\mathbb{Z}
_{0}\times A\cong 
\mathbb{Z}
_{0}\times B$, where $%
\mathbb{Z}
_{0}=(%
\mathbb{Z}
,+,\ast )$\ with $x\ast y=0$\ for any $x,y\in 
\mathbb{Z}
$.\ee
\end{theorem}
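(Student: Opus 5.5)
We prove the cycle (1)$\Rightarrow$(2)$\Rightarrow$(3)$\Rightarrow$(1). Write $K=K(B)$, $L=L(B)$, $\Delta=\Delta(B)=Is(B^2)$.

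\emph{(3)$\Rightarrow$(1).} Recall that $\Z_0\equiv\Z_0^{\,n}$ holds for every $n\geq 1$. This is elementary equivalence of torsion-free abelian groups of rank at least one with zero multiplication, by Szmielew invariants: for each prime $p$, $\Z/p\Z$ and $(\Z/p\Z)^n$ are both nontrivial, and all other invariants are infinite or equal. So $\Z_0\times A$ and $\Z_0\times B$ are isomorphic by hypothesis. Since $\Ext$-type decompositions need care, I would instead show directly that $A\equiv \Z_0\times A$ for every FDZ ring $A$ with $Ann(A)\not\subset \Delta(A)$, and handle the tame case separately.

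\emph{Tame case.} If $A$ is tame, then in $\Z_0\times A$ the annihilator is not contained in the isolator of the square, so tameness can be read off. One checks that $Ann$ modulo $Ann\cap\Delta$ has rank exactly one more than in $A$, and that this rank is determined by elementary equivalence through a $p$-index computation. Hence an isomorphism $\Z_0\times A\cong\Z_0\times B$ restricted to the relevant invariant subgroups yields $A\cong B$.

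\emph{Non-tame case.} Write $A=A_f\oplus A_0$ with $A_0$ an addition, free abelian of rank $r\ge 1$ and with zero multiplication. Then $\Z_0\times A=A_f\oplus(\Z_0\times A_0)$. The Feferman--Vaught theorem, applied to the direct product of rings, reduces the question to $A_0\equiv \Z_0\times A_0$, which is the abelian group fact above. When $A$ is not regular, $A_f$ is only a quotient, so the product decomposition fails. There one instead uses the interpretable structure $(A, A^2, Ann(A))$ and Szmielew-style invariants relative to the definable subgroup $Ann(A)$.

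\emph{(2)$\Rightarrow$(3).} Given $f$, identify $A$ with a subring of $B$ of finite index $m$ prime to $k$, with $Is(A^2)=\Delta$ and $A+Ann(B)=B$; the latter follows from $A/Ann(A)\cong B/Ann(B)$ via $f$. Thus $B/A$ is a finite quotient of $Ann(B)/(Ann(B)\cap A)$. I would build an isomorphism $\Z_0\times A\to\Z_0\times B$ that is the identity on a complement of the annihilator and a lattice map on the annihilator part. Concretely, $Ann(B)\times\Z$ and $(Ann(B)\cap A)\times\Z$ are abelian groups that agree modulo the finite group $B/A$ and contain a free summand of extra rank one. The classical cancellation trick (as for $\Z\oplus\Z\cong$ any superlattice of index $m$ compatible with a fixed sublattice) gives the required isomorphism, provided it can be chosen compatible with the intersection with $\Delta$.

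The coprimality with $k$, where $k$ is divisible by $|L/K|$, is exactly what makes this compatibility work. The multiplication $B\times B\to \Delta$ factors through $B/Ann(B)$, and the possible obstruction lives in $L/K$, a group whose order divides $k$ and hence is prime to $m$. So the correction automorphism can be taken to be the identity modulo that obstruction. \textbf{This is the main obstacle:} making the cancellation isomorphism respect the non-split extension $K\subset L$ when $B$ is not regular.

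\emph{(1)$\Rightarrow$(2).} Take $A\equiv B$. The subrings $\Delta$ and $Ann$ are definable, and the structures $A/Ann(A)$ and $\Delta(A)$ (with the induced product) are tame FDZ rings with generating tuples. By Theorem~\ref{MOS-4}, together with elementary equivalence, the isomorphism types of $A/Ann(A)$, $Is(A^2)$ and the bilinear map between them (Theorem~\ref{MOS-3}) agree with those of $B$. We then need to lift to an embedding $f:A\to B$ of index prime to $k$. The extension $0\to Ann\to A\to A/Ann\to 0$ is classified by data modulo $n$ for each $n$, and these data are expressible first-order with the generators as parameters in an elementary extension. Choosing $f$ to realize the same class modulo $k$, which is possible by elementary equivalence, gives index prime to $k$.
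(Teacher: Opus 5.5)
The paper gives no proof of this theorem; it is quoted from \cite{MOS}. So I judge your proposal on its own terms. It has a fatal error in (3)$\Rightarrow$(1), and the other two implications are left unproved.

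\textbf{(3)$\Rightarrow$(1).} Your starting fact $\Z_0\equiv\Z_0^{\,n}$ is false. Szmielew's invariants record $\dim_{\mathbb{F}_p}(G/pG)$ exactly, not merely whether it is nonzero, and the sentence ``$|G/2G|=2$'' holds in $\Z$ but fails in $\Z^2$. For the same reason $A\not\equiv\Z_0\times A$ for \emph{every} FDZ ring $A$: $|A/2A|$ is finite, is fixed by a sentence, and doubles when the factor $\Z_0$ is added. So your non-tame case reduces, via Feferman--Vaught, to the false statement $\Z_0^{\,r}\equiv\Z_0^{\,r+1}$, and the non-regular case is only gestured at. Yet this case is the entire content of the theorem. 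The tame case is easy: a tame ring is regular, the foundation $C/C_0$ of $C=\Z_0\times A$ does not depend on the addition $C_0$, and so $\Z_0$ cancels outright.

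\textbf{(2)$\Rightarrow$(3).} You identify the real problem but do not resolve it. You need an additive bijection $\Phi:\Z\times A\to\Z\times B$ such that $\Phi-\iota$ ($\iota$ the inclusion) takes values in $Ann(\Z_0\times B)$ and vanishes on $A^2$; such a $\Phi$ is automatically multiplicative. Producing it is a lattice problem about $\Z\times Ann(A)\subseteq\Z\times Ann(B)$ relative to $\Delta$ and to $L=Is(Ann+B^2)$. Saying that the obstruction ``lives in $L/K$'' and is killed by coprimality restates what has to be proved.

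\textbf{(1)$\Rightarrow$(2).} The claim that $A/Ann(A)$ and $\Delta(A)$ are tame rings is false. For the Heisenberg ring with basis $x,y,z$ and $xy=-yx=z$, one gets $A/Ann(A)\cong\Z_0^{\,2}$ and $\Delta(A)\cong\Z_0$, neither tame. Also, ``realize the same class modulo $k$'' is not an argument. You must exhibit a formula, with specified parameters, whose realization in $B$ yields an embedding of index prime to $k$. Your sketch never uses the hypothesis $|L(B)/K(B)|\mid k$.

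\textbf{The missing idea.} Elementary equivalence should be obtained from embeddings via ultrapowers, not by cancelling $\Z_0$ inside a product. For (2)$\Rightarrow$(1) the mechanism is the one the paper itself runs in Section~\ref{sec:conv} (Lemma~\ref{lem:mono} and the theorem following Lemma~\ref{satabelian:lem}):
\begin{itemize}
\item Start from an embedding of index $d$ prime to $e=|L(B)/K(B)|$.
\item Perturb the images of the addition generators by multiples of $e$, and those of the $L/K$-generators by the matching multiples of $e/e_i$.
\item These maps remain ring embeddings, because the perturbations lie in the annihilator and the relations $e_i\cdot(\text{$L$-generator})\equiv(\text{addition generator})$ modulo $\Delta$ are preserved.
\item In an $\aleph_1$-saturated ultrapower, the $i$-th addition generator is then sent to $\alpha_iv_i$ with $\alpha_i=d_i+q_i^*e$, which is divisible by no standard prime.
\item Since $1\mapsto\alpha_i$ extends to an automorphism of $(\Z^*,+)$, one gets $A^*\cong B^*$.
\end{itemize}
This is exactly where coprimality with $|L(B)/K(B)|$ is used.

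For (3)$\Rightarrow$(1) you can then pass through (2). Write $\Z_0\times A=\langle z\rangle\oplus A=\langle z'\rangle\oplus B'$ with $B'\cong B$ and $z'=mz+a_0$. If $m\neq0$, the composite $A\to(\Z_0\times A)/\langle z'\rangle\cong B$ is an embedding of index $|m|$ inducing isomorphisms on $Is(\cdot^2)$ and on the quotients by the annihilators. The real work is to adjust the decomposition so that $m$ is nonzero and prime to the required modulus.
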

\begin{cor}\label{MOS-6} Each regular FDZ ring is first-order rigid among the FDZ rings. \end{cor}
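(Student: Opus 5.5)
We derive Corollary~\ref{MOS-6} from the preceding theorem. Let $B$ be a regular FDZ ring and $A$ an FDZ ring with $A\equiv B$; we must show $A\cong B$. Since $B$ is regular, $K(B)=L(B)$, so $|L(B)/K(B)|=1$. Hence every $k\in\mathbb{N}\setminus\{0\}$ is divisible by $|L(B)/K(B)|$, and we may choose $k$ freely.

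First, fix $k$. Apply the implication (1)$\Rightarrow$(2) of the theorem. This gives an injective homomorphism $f:A\rightarrow B$ with $|B:f(A)|$ finite and prime to $k$, and $f$ induces isomorphisms $Is(A^2)\cong Is(B^2)$ and $A/Ann(A)\cong B/Ann(B)$. The index $|B:f(A)|$ depends on $f$, so this does not yet give an isomorphism.

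Second, choose $k$ to defeat the index. A cleaner route is to show directly that any such $f$ is surjective when $B$ is regular. Let $b\in B$. Because $f$ induces an isomorphism $A/Ann(A)\cong B/Ann(B)$, we can write $b=f(a)+z$ with $a\in A$ and $z\in Ann(B)$. So it suffices to show $Ann(B)\subseteq f(A)$.

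By regularity, $Ann(B)\subseteq L(B)=K(B)=Ann(B)+\Delta(B)$. This alone is not enough, so we examine the quotient $B/f(A)$. It is a finite group of order $m$ prime to $k$, and $f(A)\supseteq Is(B^2)=\Delta(B)$. The term $\Delta(B)$ is therefore already handled, and the remaining difficulty lies in $Ann(B)$ modulo $\Delta(B)$.

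This is the step I expect to be the main obstacle. The map $f$ restricts to $Ann(A)\rightarrow Ann(B)$, and the index $|Ann(B):f(Ann(A))|$ may be nontrivial. So $f$ need not be surjective, and the argument has to switch from surjectivity to an abstract isomorphism.

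For this I would use implication (1)$\Rightarrow$(3): $\mathbb{Z}_0\times A\cong\mathbb{Z}_0\times B$. The aim is to cancel the factor $\mathbb{Z}_0$. Via the regular decomposition, write $B=B_f\oplus B_0$, where $B_0$ is an addition, so $B_0\cap\Delta(B)=0$ and $B_0$ is a free abelian direct summand of $Ann(B)$ modulo torsion considerations. From (2), $A$ is also regular, since the relevant quotients match up to an index prime to all $k$, forcing $|L(A)/K(A)|=1$. So likewise $A=A_f\oplus A_0$.

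The isomorphism $\mathbb{Z}_0\times A\cong\mathbb{Z}_0\times B$ then gives two things:
\begin{itemize}
\item $A_0$ and $B_0$, as groups with zero multiplication, are free abelian of the same rank. This is because ranks of $Ann$ and of $\Delta$ are invariants read off from the isomorphism.
\item The foundations satisfy $A_f\cong B_f$, since a foundation is determined up to isomorphism as the quotient $A/A_0$.
\end{itemize}
Therefore $A=A_f\oplus A_0\cong B_f\oplus B_0=B$.

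The delicate point is justifying that the foundation is independent of the chosen addition, so that the isomorphism of the products descends to the foundations. I would prove it using the fact that any two additions differ by a homomorphism into $Ann(B)\cap\Delta(B)$, which gives a ring automorphism of $B$ fixing $\Delta(B)$.
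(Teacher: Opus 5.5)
Your argument is correct. The paper gives no proof of its own here: the corollary is quoted from \cite{MOS} as a consequence of the preceding theorem. Your derivation from (3) is therefore a legitimate way of supplying the details, and it works. The steps are the regular decompositions $A=A_f\oplus A_0$ and $B=B_f\oplus B_0$, equality of the ranks of $A_0$ and $B_0$, and cancellation of $\Z_0$. For the cancellation you apply the independence-of-foundation lemma to the two additions $\Z\times B_0$ and $\Phi(\Z\times A_0)$ of $\Z_0\times B$. That lemma holds as you sketch it. If $C=C_f\oplus C_0$ and $C_0'$ is the graph of $\delta:C_0\to Ann(C)\cap\Delta(C)$, then $c\mapsto c+\delta(\pi_0(c))$ is a ring automorphism carrying $C_0$ onto $C_0'$. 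This works because $\delta(C_0)\subseteq\Delta(C)\subseteq C_f$ and all added terms lie in $Ann(C)$.

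One justification should be replaced. Regularity of $A$ does not follow from an index being ``prime to all $k$'', since each $f$ depends on $k$. Two correct reasons are available:
\begin{itemize}
\item $L(\Z_0\times A)/K(\Z_0\times A)\cong L(A)/K(A)$, so regularity passes through the isomorphism in (3).
\item For a single $f$ as in (2), $f(L(A))\subseteq L(B)=K(B)=Ann(B)+f(\Delta(A))$. Together with $f^{-1}(Ann(B))=Ann(A)$, this gives $L(A)\subseteq K(A)$.
\end{itemize}

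The route through (2) that you abandoned closes quickly once you drop the demand that the embedding be onto.
\begin{itemize}
\item Apply (1)$\Rightarrow$(2) with the roles swapped, taking $k=|L(A)/K(A)|$, to get $g:B\to A$.
\item For any addition $A_0'$ of $A$ you get $A=g(B_f)\oplus A_0'$. Indeed $A=g(B)+Ann(A)=g(B_f)+Ann(A)$, and $Ann(A)\cap\Delta(A)\subseteq\Delta(A)=g(\Delta(B))\subseteq g(B_f)$. Also $g(B_f)\cap Ann(A)\subseteq Ann(A)\cap\Delta(A)$, because $B_f\cap Ann(B)=Ann(B)\cap\Delta(B)$.
\item Finiteness of $|A:g(B)|$ gives $\mathrm{rk}\,A_0'=\mathrm{rk}\,B_0$, hence $A\cong B_f\times A_0'\cong B_f\times B_0\cong B$.
\end{itemize}
This version needs neither regularity of $A$ nor the cancellation lemma. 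It also uses only that the index is finite, not that it is prime to $k$. Your route, in exchange, isolates a reusable fact: the foundation of a regular ring is an isomorphism invariant.
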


In ~\cite{Rich}, the following theorem was proved.

\begin{theorem}\label{KMS-1}  The following are equivalent for a finitely generated commutative ring $A$ (without unity):
\begin{enumerate}
    \item $A$ is $QFA$ in the language of non-unitary rings.
    \item $A$ is tame.
\end{enumerate}
\end{theorem}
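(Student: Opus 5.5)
The plan is to prove the two directions separately, for a finitely generated commutative ring $A$ without unity.

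\emph{Tame implies QFA.} First show that a finitely generated commutative ring $A$ is finitely presented and Noetherian (it is a quotient of a polynomial ring without constant term). Then locate the key structure. The subring $A^2$ and the ideal $\Delta(A)=Is(A^2)$ are definable: $A^2$ is a finitely generated ideal, so it is defined by an existential formula with parameters, and $A/A^2$ is a finitely generated abelian group of bounded exponent on its torsion part, so $\Delta(A)$ is defined by $\exists y\in A^2\,(nx=y)$ for a fixed $n$. Next, interpret a scalar ring $R$ acting on $A$. We may take $R$ to be $\Z$, defined by exponentiation-free means via multiplication operators, or the ring of ``multiplications'' in $A^2$. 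Using this, I would build a bi-interpretation-like argument in the spirit of Nies's theorem quoted above.

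The decisive use of tameness is this. Since $Ann(A)\subset\Delta(A)$, every element of $A$ either lies in $\Delta(A)$ or acts nontrivially by multiplication. Consequently the additive structure of $A$ modulo a finite-index piece is visible through the multiplicative action. Concretely, for a generating tuple $\bar a$, I would write a sentence $\phi$ that says the following:
\begin{enumerate}
\item the generators satisfy the finitely many defining relations of $A$;
\item the ring is generated by $\bar a$, expressed first-order by saying that every element is a polynomial in $\bar a$ with coefficients from an interpretable copy of $\Z$ in which exponents are bounded by the ranks;
\item the copy of $\Z$ is standard, forced by finite generation of the target structure.
\end{enumerate}
To get the interpretable copy of $\Z$, I would use that $A$ is finitely generated, so its integral part is a finite extension of some polynomial-type ring in which $\Z$, or a Noetherian domain bi-interpretable with it, is definable (Robinson style). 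Alternatively, in the case where $A$ is FDZ, I would simply invoke Theorem~\ref{MOS-4}.

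\emph{QFA implies tame.} Suppose instead that $A$ is not tame, and pick $z\in Ann(A)\setminus\Delta(A)$. Then $\langle z\rangle$ is an infinite cyclic summand modulo $\Delta(A)$, or has finite order not absorbed. I would then construct finitely generated rings $B_n$ by replacing the annihilator summand $\Z z$ with an index-$p_n$ variant, using the equivalence in the theorem preceding Corollary~\ref{MOS-6}. These satisfy $\Z_0\times A\cong\Z_0\times B_n$, so $B_n\equiv A$. On the other hand $B_n\not\cong A$, which follows by comparing $L/K$ invariants. In fact it suffices to produce a single finitely generated $B\equiv A$ with $B\not\cong A$ satisfying every sentence of $A$. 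Such a $B$ refutes QFA.

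I expect the main obstacle to be the tame $\Rightarrow$ QFA direction when $A$ is infinitely generated additively. There one must define a copy of $\Z$ together with a parametrization of $A$ by tuples from it uniformly, and also ensure that any finitely generated model of $\phi$ makes that copy standard. I would handle this by reducing, via the Noetherian structure, to $A$ being a module over a finitely generated scalar ring in which $\Z$ is interpretable. From there I would follow Nies's bi-interpretation argument.
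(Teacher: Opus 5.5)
The paper does not prove this statement itself; it is quoted from \cite{Rich}, and the in-paper analogue is the proof of Theorem~\ref{thm:Main1} for FDZ rings. Judged on its own and against that analogue, your proposal has genuine gaps in both directions.

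\emph{QFA $\Rightarrow$ tame.} Exhibiting a single finitely generated $B\equiv A$ with $B\not\cong A$ cannot work in general, because non-tame rings can be first-order rigid. Take $\Z_0$, the group $\Z$ with zero multiplication.
\begin{itemize}
\item It is finitely generated and $Ann(\Z_0)=\Z_0\not\subseteq\Delta(\Z_0)=0$, so it is not tame.
\item Any finitely generated ring $B\equiv\Z_0$ has zero multiplication and is a finitely generated abelian group elementarily equivalent to $\Z$, so $B\cong\Z_0$. More generally, Corollary~\ref{MOS-6} makes regular FDZ rings rigid among FDZ rings.
\end{itemize}
Three further problems:
\begin{itemize}
\item Your separating invariant cannot work. $K$ and $L$ are uniformly definable and $L/K$ is finite, so $L(B)/K(B)\cong L(A)/K(A)$ whenever $B\equiv A$.
\item The equivalence theorem preceding Corollary~\ref{MOS-6} applies only to FDZ rings, while the statement concerns arbitrary finitely generated commutative rings.
\item Any $z\in Ann(A)\setminus\Delta(A)$ automatically has infinite order and $\Z z\cap\Delta(A)=0$, since $\Delta(A)$ is isolated and contains all torsion. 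So your ``finite order'' case cannot occur.
\end{itemize}
What is needed is, for each sentence $\varphi$ true in $A$, a finitely generated model of $\varphi$ that is \emph{not} elementarily equivalent to $A$. For instance, take $B_p=A\times(\Z/p\Z)_0$ and a nonprincipal ultrafilter $U$ on the primes. Then $\prod_U B_p\cong A^U\times V_0$ with $V$ a $\Q$-vector space. Since $\Z^U z\cap\Delta(A^U)=0$, the nonzero divisible part $D$ of $\Z^U z$ admits a retraction $A^U\to D$ vanishing on $\Delta(A^U)\supseteq(A^U)^2$. Hence $A^U\cong C\times D_0$ as rings, and by Feferman--Vaught $A^U\times V_0\cong C\times(D\oplus V)_0\equiv A^U\equiv A$. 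By {\L}o\'s, $\varphi$ holds in $B_p$ for infinitely many $p$, and $B_p\not\cong A$ once $p$ does not divide the bounded torsion exponent of $A$.

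\emph{Tame $\Rightarrow$ QFA.} Here the sketch contains no working mechanism, and tameness never enters your sentence $\varphi$, which is a warning sign given $\Z_0$.
\begin{itemize}
\item Clause~(2) cannot be formalized. For the tame, non-FDZ ring $x\Z[x]$, elements are polynomials of unbounded degree, so ``exponents bounded by the ranks'' is meaningless. Expressing generation needs an interpretable scalar ring together with its action.
\item Clause~(3) is the whole difficulty, not a consequence of finite generation. A ring interpreted in a finitely generated $B$ need not be finitely generated, let alone standard.
\item Nies's route is unavailable in general. $\Z\times\Z$ is tame and QFA (Theorem~\ref{AKNS2}) but not bi-interpretable with $\Z$ (Theorem~\ref{AKNS1}, as its $\text{Spec}^0$ is disconnected).
\item Even for FDZ $A$, Theorem~\ref{MOS-4} characterizes $A$ only among FDZ rings. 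You must still force every finitely generated model of the sentence to be FDZ.
\end{itemize}
That last step is what the paper's proof of Theorem~\ref{thm:Main1} supplies:
\begin{itemize}
\item it interprets $P(f_A)$ with its action on $\widehat{A}$ and $A^2$;
\item it adds a sentence making $\widehat{B}$ a finitely generated $P(f_B)$-module;
\item it shows $P(f_B)$ is a finitely generated ring, via its embedding in $End_R(\widehat{B})$ for a finitely generated unital subring $R$;
\item it uses Theorem~\ref{AKNS2} to get $P(f_B)\cong P(f_A)$, and only then applies Theorem~\ref{MOS-4}.
\end{itemize}
Tameness is what lets data on $\widehat{A}$ and $A^2$ control all of $A$, since $Ann(A)\subseteq Is(A^2)$. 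For commutative associative $A$ this route is natural even beyond FDZ, because $A$ acts on $f_A$ by scalars, so $P(f_A)$ contains the image of the unitization $\Z\oplus A$.
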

\subsection{QFA and bi-interpretability with \texorpdfstring{$\Z$}{Z} for scalar rings}
In the sequel, we shall refer to two crucial results obtained by M. Aschenbrenner et. al in \cite{AKNS}.

\begin{theorem}[\cite{AKNS}]\label{AKNS1} Suppose that a commutative, associative, unitary ring $A$ is finitely generated, and let $N$ be the nilradical of $A$. Then $A$ is bi-interpretable with the ring of integer $\Z$  if and only if $A$ is infinite, $\text{Spec}^0(A)$ is connected, and there is some integer $d\geq 1$ with $dN=0$.   
\end{theorem}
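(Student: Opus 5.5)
The plan is to prove both directions of Theorem~\ref{AKNS1} by reducing to the domain case. The main tools are the structure theory of finitely generated commutative rings: the nilradical $N$, the finitely many minimal primes, and Noether normalization over $\Z$ or over $\mathbb{F}_p$. We also use the fact that a finitely generated infinite integral domain interprets $\Z$ (Robinson/Rumely). Throughout, $\Spec^0(A)$ denotes the set of minimal primes with the connectedness relation induced from $\Spec(A)$: two minimal primes are linked if their sum is a proper ideal, and we take the transitive closure.

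\textbf{Necessity.} Suppose $A$ is bi-interpretable with $\Z$.
\begin{enumerate}
\item $A$ is infinite, since a finite structure interprets only finite structures and so cannot interpret $\Z$.
\item Suppose $\Spec^0(A)$ were disconnected. Then there is a nontrivial idempotent-like splitting $A\to A_1\times A_2$ that is injective modulo nilpotents. For finite products, bi-interpretability with $\Z$ fails because an automorphism (or definable-type) argument shows a product of two infinite rings has too many automorphisms of the relevant interpreted copies. I would instead use the criterion that bi-interpretability with $\Z$ forces every element to be definable with the fixed parameters (the structure is ``rigid over a finite tuple''). In a disconnected case one then builds two elementarily equivalent non-isomorphic finitely generated rings, contradicting QFA via Nies' theorem.
\item Suppose $dN\neq 0$ for every $d\ge 1$. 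Then $N$ contains a nonzero $\Z$-torsion-free part on which the $\Q$-structure is not recoverable. The same QFA failure argument, via a nonstandard deformation of the nilpotent part, gives a contradiction.
\end{enumerate}

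\textbf{Sufficiency.} Assume $A$ is infinite, $\Spec^0(A)$ is connected, and $dN=0$.
\begin{enumerate}
\item For each minimal prime $P$, the domain $A/P$ that is infinite is bi-interpretable with $\Z$. This uses an interpretation of $\Z$ in $A/P$ together with the coding of elements of $A/P$ as polynomials over generators, which is definable in $\Z$.
\item Connectedness is used to glue the factors. The map $A/N\to\prod A/P$ is injective, and connectedness lets us define each finite factor and each infinite factor uniformly from one interpreted copy of $\Z$. Linked components share a common quotient, which identifies their copies of $\Z$ definably.
\item Finally, $dN=0$ makes $N$ a finitely generated module over $A/dA$ and over $A/N$ up to finite filtration. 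It is then coded by finite tuples over the already interpreted ring, and the isomorphism is definable.
\end{enumerate}

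\textbf{Main obstacle.} I expect the hardest step to be the gluing in sufficiency step 2 and the corresponding necessity argument for disconnectedness. The key is to show that the copies of $\Z$ interpreted in different components are definably isomorphic exactly when the components are linked.
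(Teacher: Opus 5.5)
The paper gives no proof of this statement; it is quoted from \cite{AKNS}. Measured against that result, your proposal has two genuine gaps.

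\textbf{The meaning of $\text{Spec}^0(A)$.} In \cite{AKNS} it is the space of \emph{non-maximal} prime ideals, with the Zariski topology. Your relation $P\sim Q\iff P+Q\neq A$ on minimal primes instead computes connectedness of all of $\text{Spec}(A)$, i.e.\ the absence of nontrivial idempotents. With that reading the statement is false in both directions.
\begin{itemize}
\item The ring $\Z\times_{\mathbb{F}_p}\Z\cong\Z[x]/(x(x-p))$ is infinite and reduced, and its two minimal primes sum to a maximal ideal of index $p$. Yet it is not bi-interpretable with $\Z$. Take $M\succ\Z$ with an automorphism $\sigma\neq\mathrm{id}$ (e.g.\ a countable recursively saturated model). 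Then $(x,y)\mapsto(\sigma x,y)$ preserves $M\times_{\mathbb{F}_p}M$, because $\sigma x\equiv x \bmod pM$. It fixes $\Z\times_{\mathbb{F}_p}\Z$ pointwise but moves the diagonal $\{(m,m)\}$, which is what the formula defining $\Z\cdot 1$ would have to define there under a bi-interpretation.
\item Conversely, $\Z\times\mathbb{F}_p$ is bi-interpretable with $\Z$ (name the idempotent $(1,0)$), although its minimal primes sum to the whole ring.
\end{itemize}
The first example is exactly where your gluing step breaks: a finite common quotient $A/(P+Q)$ cannot definably identify the two interpreted copies of $\Z$. The gluing has to use that $P+Q$ lies in a non-maximal prime. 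Then $A/(P+Q)$ has an infinite domain quotient, itself bi-interpretable with $\Z$, through which the two copies can be matched. Components that are finite fields must be split off as finite direct factors rather than counted as obstructions. Separately, your sufficiency step 1 (infinite finitely generated domains are bi-interpretable with $\Z$) is a substantial result in its own right. Coding by polynomials only interprets $A/P$ in $\Z$; the real work is defining the coordinate map inside $A/P$.

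\textbf{The necessity strategy.} You try to refute bi-interpretability by building non-isomorphic finitely generated rings elementarily equivalent to $A$, i.e.\ by contradicting QFA, or by appealing to definability of every element over a finite tuple. Neither can succeed. By Theorem~\ref{AKNS2}, every finitely generated commutative unitary ring is QFA, including $\Z\times\Z$ and $\Z[\varepsilon]/(\varepsilon^2)$, which are not bi-interpretable with $\Z$. And in any finitely generated ring every element is an integer polynomial in the generators, hence definable over them.

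You need a property that separates bi-interpretability from QFA. For instance, bi-interpretability forces sets such as $\Z\cdot 1$, or a definable isomorphism between the copies of $\Z$ on different components, to be definable. One then exhibits an elementary extension with an automorphism fixing the parameters that moves such a set, as in the twist above or in the paper's proof of Theorem~\ref{thm:main2}. When $dN\neq 0$ for all $d$, a natural twist is trivial modulo $N$: for example $a+b\varepsilon\mapsto a+(b+\delta(a))\varepsilon$ on $M[\varepsilon]$, for a nonzero derivation $\delta$ of a nonstandard model $M$. The existence of such a $\delta$ is then what has to be proved.

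Dually, your sufficiency step 3 never says where $dN=0$ enters. One way it does: $x+N\mapsto dx$ becomes a well-defined definable map $A/N\to A$. For example, in $\Z[\varepsilon]/(\varepsilon^2,p\varepsilon)$ this makes $\Z\cdot 1=\bigcup_{r<p}(r+pA)$ definable, whereas in $\Z[\varepsilon]/(\varepsilon^2)$ it is not.
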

\begin{theorem}[\cite{AKNS}]\label{AKNS2} Each finitely generated associative, commutative, unitary ring is QFA.
\end{theorem}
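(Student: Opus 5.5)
This is Theorem~\ref{AKNS2}: every finitely generated associative, commutative, unitary ring $A$ is QFA. The plan is to reduce to the infinite case, decompose $A$ into connected pieces, and treat two regimes separately: components bi-interpretable with $\Z$, and components containing a nilpotent ideal of unbounded torsion.

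\textbf{Reduction and decomposition.} If $A$ is finite, a single sentence pins it down up to isomorphism among all structures, so assume $A$ is infinite. Since $A$ is Noetherian, $\Spec^0(A)$ has finitely many connected components, and these correspond to a finite set of orthogonal idempotents $e_1,\dots,e_m$ with $A\cong\prod_i Ae_i$. The statement ``$A$ is the product of the $Ae_i$, each $Ae_i$ has connected spectrum in the appropriate sense, and each $Ae_i$ satisfies $\phi_i$'' is first-order with the $e_i$ as parameters. The parameters can then be existentially quantified, because QFA allows an existential closure over a finite tuple of elements.

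So it suffices to characterize each factor by one sentence. Finite factors are handled as above. For infinite connected factors with $dN=0$, Theorem~\ref{AKNS1} gives bi-interpretability with $\Z$, and Nies' theorem then gives QFA.

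\textbf{Unbounded nilradical torsion.} The remaining case is an infinite connected $A$ whose nilradical $N$ has unbounded additive torsion, and this is the main obstacle. I would handle it by finding a definable copy of $\Z$ or of a bi-interpretable quotient. Choose $d$ so that $N$'s torsion part is controlled, and consider $\bar A=A/\mathrm{Ann}$ or $A/\mathfrak{p}$ for suitable primes. One of these quotients is bi-interpretable with $\Z$; since it is interpretable in $A$, it yields a definable copy of $\Z$ inside $A$.

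Using that copy, I would encode a finite presentation of $A$ as a $\Z$-algebra: generators $a_1,\dots,a_n$ and finitely many relations, by Hilbert's basis theorem. I would then write a sentence asserting the following.
\begin{itemize}
\item The $a_i$ satisfy the relations.
\item Every element is a polynomial in the $a_i$ with coefficients from the definable integers. Polynomials of arbitrary degree are coded through the interpreted arithmetic, using the standard G\"odel coding of finite sequences in $\Z$.
\item The ideal of relations is exactly the given one. This is expressed by saying that any polynomial vanishing at $\bar a$ lies in the ideal generated by the relations, which is an arithmetical statement because ideal membership in $\Z[x_1,\dots,x_n]$ is $\Sigma_1$-definable in $\Z$.
\end{itemize}

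\textbf{Completing the argument.} Given a finitely generated $B$ satisfying this sentence, the definable ``integers'' of $B$ form a finitely generated model elementarily equivalent to $\Z$, hence isomorphic to $\Z$, since $\Z$ is QFA. The coding then gives a surjection $\Z[\bar x]/I\to B$ that is also injective, so $B\cong A$.

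The delicate step, which I expect to need the most care, is making ``every element is a polynomial value'' first-order uniformly when $dN\neq 0$. The difficulty is that the interpreted $\Z$ might not act faithfully on all of $A$. I would try to fix this by coding the polynomials via the definable integers acting through a finite chain of definable ideals $N\supset N^2\supset\cdots$, treating each layer as a finitely generated module over a bi-interpretable quotient, in the spirit of Theorem~\ref{MOS-2}.
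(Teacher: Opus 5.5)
The paper does not prove this statement; it is quoted from \cite{AKNS}. Judged on its own, your outline has two genuine gaps.

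\textbf{The reduction step.} Idempotents of $A$ correspond to clopen subsets of $\Spec(A)$, not of $\text{Spec}^0(A)$. So splitting $A\cong\prod_i Ae_i$ only gives factors with connected spectrum. For example, $A=\{(a,b)\in\Z\times\Z : a\equiv b \pmod 2\}$ is infinite, reduced, and has no nontrivial idempotents. Yet $\text{Spec}^0(A)$ consists of the two incomparable minimal primes $\{(0,b): b\text{ even}\}$ and $\{(a,0): a\text{ even}\}$, so it is disconnected. By Theorem~\ref{AKNS1} this ring is \emph{not} bi-interpretable with $\Z$. Hence your claim that infinite connected factors with $dN=0$ are handled by Theorem~\ref{AKNS1} is false. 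The whole case ``$\Spec(A)$ connected but $\text{Spec}^0(A)$ disconnected'' is left unaddressed and needs a separate gluing argument. For instance, $A/N$ sits with finite cokernel inside $\prod_j A/Q_j$, where $Q_j$ is the intersection of the minimal primes in the $j$-th component of $\text{Spec}^0(A)$. Each $A/Q_j$ is bi-interpretable with $\Z$, and one must still pin down this finite-index gluing.

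\textbf{The unbounded-torsion case.} Here the gap is structural, not technical. A copy of $\Z$ interpreted in a quotient $A/\mathfrak p$ does not act definably on $A$. More decisively, no formula true in $A$ can express ``$x$ is the value at $\bar a$ of the polynomial with G\"odel code $c$.'' Together with the arithmetical relation ``$c-c'$ codes an element of $I$,'' such a formula would give an $A$-definable isomorphism between $A$ and a copy of $A$ interpreted in $\Z$, i.e.\ a bi-interpretation of $A$ with $\Z$. Theorem~\ref{AKNS1} rules this out exactly when no $d\geq 1$ kills $N$ (e.g.\ $A=\Z[x]/(x^2)$). So the sentence you describe cannot exist in the very case you apply it to.

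Your last paragraph points the right way, but it is the actual proof, it is missing, and it should not code polynomials at all. Assuming $\text{Spec}^0(A)$ connected, the argument would run as follows.
\begin{enumerate}
\item $A/N$ is bi-interpretable with $\Z$, and $N=\{x:x^m=0\}$ and the $N^i$ are definable with parameters.
\item Pin down $(B/N(B),\bar b)\cong(A/N,\bar a)$ using that bi-interpretation.
\item Pin down each $N^i/N^{i+1}$ as a finitely presented $A/N$-module using only fixed-length sums.
\item Require $N^K=0$, and require that the chosen elements of $B$ satisfy generators of the relation ideal $I$.
\end{enumerate}
For finitely generated $B$, the induced map $\Z[\bar x]/I\cong A\to B$ is then an isomorphism on $A/N$ and on each graded piece. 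Hence it is an isomorphism, by induction along the filtration.

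\textbf{A smaller point.} Your claim that the definable integers of $B$ form a finitely generated model elementarily equivalent to $\Z$ is unjustified. $B$ satisfies only one sentence, and interpreted structures need not be finitely generated. Standardness has to come from the bi-interpretation, as in Nies' argument.
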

\subsection{Main results}
Here, we state the main results of this paper. The first result is a generalization of
\begin{theorem}\label{thm:Main1} The following are equivalent for an $FDZ$-ring.
\begin{enumerate}
    \item $A$ is $QFA$ in the language of rings.
    \item $A$ is tame.
\end{enumerate}
\end{theorem}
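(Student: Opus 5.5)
We prove the two implications separately, modelling the argument on Theorem~\ref{KMS-1} and dropping the commutativity hypothesis. We use that an FDZ-ring is finitely generated as a ring, because $(A,+)$ is finitely generated. So QFA for $A$ means being characterized by one sentence among all finitely generated rings, not only among FDZ ones.

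\emph{Tame implies QFA.} Let $A$ be tame and let $\overline{x}=(x_1,\dots,x_n)$ generate $(A,+)$. By Theorem~\ref{MOS-4}, $(A,\overline{x})$ is FDZ-finitely axiomatizable via a formula $\varphi(\overline{u})$. The task is to upgrade this from FDZ structures to all finitely generated rings. We will find a sentence $\psi$ true in $A$ such that every finitely generated ring $B\models\psi$ is FDZ; then $\exists\overline{u}\,\varphi(\overline{u})\wedge\psi$ characterizes $A$.

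For $\psi$ we use tameness. Since $Ann(A)\subseteq\Delta(A)=Is(A^2)$, there is $m\ge1$ with $mA\subseteq A^2+Ann(A)$ and $m\,Ann(A)\subseteq A^2$. We write down the multiplication table $x_ix_j=\sum_k c_{ijk}x_k$ and express this as first-order conditions on $\overline{u}$: additive closure, and the statement that every element $y$ satisfies
\[
my=\sum_i(\text{products }a_iu_i)
\]
for suitable witnesses. These conditions should force $B$ to be additively generated by $\overline{u}$ together with finitely many products $b_ju_i$, where the $b_j$ run over the ring generators of $B$. Concretely, the key claim is that the subgroup $\langle\overline{u}\rangle$ has finite index and is finitely generated. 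To prove it we take ring generators $b_1,\dots,b_k$ of $B$ and show by induction on monomial length that $m^{N}\cdot(\text{monomial})\in\langle \overline{u}\rangle$, using the formula that products $u_iu_j$ lie in $\langle\overline u\rangle$ and $my\in B\overline u+\overline uB$.

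We expect this to be the main obstacle. Tameness is exactly what ensures the annihilator cannot carry an unbounded free part. In the non-tame case $B\cong A\times$(some infinitely generated zero-ring) would satisfy the same finite information, so the proof of this step must use the hypothesis essentially. We will try to adapt the argument of \cite{Rich} verbatim, checking that commutativity is used there only for the two-sided product expressions.

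\emph{QFA implies tame.} Suppose $A$ is not tame, so that $Ann(A)\not\subseteq\Delta(A)$. Then there is an addition $A_0\neq0$ with $A=A_f\oplus A_0$ as groups, and $A_0$ is a free abelian direct summand lying in the annihilator; it contains a copy of $\Z_0$ as a direct factor, $A\cong A'\times\Z_0$. Given any sentence $\phi$ true in $A$, we consider $B=A'\times\Z_0^{(\omega)}$, or alternatively $A'\times(\Q,+,0)$ restricted to a finitely generated model. For a finitely generated counterexample we use the Szmielew/Eklof--Fisher fact that $(\Z,+)\equiv(\Z^k,+)$ fails, so instead we choose $B=A'\times\Z_0\times C_0$ with $C_0$ a finitely generated zero-ring. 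Here we need some finitely generated ring $\not\cong A$ satisfying $\phi$. The cleanest route is the Feferman--Vaught theorem: $\Z_0\equiv\Z_0\times(\Z/p)_0$ fails as well, so we would instead take the zero ring on a finitely generated non-FDZ-free object. Since $\Z_0$ as a ring with zero multiplication is an abelian group, and the group $\Z$ is not QFA among finitely generated groups ($\Z\equiv_{\phi}\Z\times\Z/p$ for large $p$, by Szmielew invariants), we take $B=A'\times(\Z\oplus\Z/p)_0$ for a prime $p$ large relative to $\phi$. Feferman--Vaught gives $B\models\phi$ and $B\not\cong A$. Hence $A$ is not QFA.
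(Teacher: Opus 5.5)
\paragraph{Tame $\Rightarrow$ QFA.} Your reduction is the same as the paper's: find a sentence $\psi$ true in $A$ that forces every finitely generated model to be FDZ, then apply Theorem~\ref{MOS-4}. The gap is the forcing step itself. You leave it unproved, and the version you sketch cannot work, for two reasons.

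\emph{The inclusion is false.} Tameness does not give $mA\subseteq A^2+Ann(A)$; that inclusion says $L(A)=A$. Take $A=\Z x\oplus\Z y$ with $x^2=y$ and all other products $0$. Then $Ann(A)=A^2=\Delta(A)=\Z y$, so $A$ is tame, yet $A/(A^2+Ann(A))\cong\Z$.

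\emph{The conditions do not force finite index.} Even when such an $m$ exists, conditions like $u_iu_j\in\langle\overline u\rangle$ and $my\in B\overline u+\overline uB$ cannot make $\langle\overline u\rangle$ of finite index. For $A=\Z$ and $\overline u=(1)$, the finitely generated ring $\Z[X]$ with $u_1=1$ satisfies all of them. Your induction on monomial length does not close, because the coefficients $a_i$ in $my=\sum a_iu_i$ are arbitrary elements of $B$, not elements of $\langle\overline u\rangle$. Already for $A=\Z$ the step you need is the nontrivial fact that $\Z$ is QFA, so no elementary closure argument of this kind will do.

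\emph{What the paper uses instead.} The largest scalar ring $P(f_B)$ is interpreted in $B$ by the same formulas that interpret $P(f_A)$ in $A$ (Lemma~\ref{lem:ft} and the corollary after it). One sentence says that $\widehat{B}$ is generated over $P(f_B)$ by boundedly many elements. From this the paper argues that $P(f_B)$ is a finitely generated scalar ring. The QFA theorem of \cite{AKNS} for such rings then gives $P(f_B)\cong P(f_A)$, which is FDZ. Consequently $\widehat{B}$ and $B^2$ are finitely generated $P(f_B)$-modules, hence FDZ. Then $B=\sum_j\Z b_j+B^2$ is FDZ, and Theorem~\ref{MOS-4} applies.

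\paragraph{QFA $\Rightarrow$ tame.} The Feferman--Vaught argument with $A'\times(\Z\oplus\Z/p\Z)_0$ is correct once $A\cong A'\times\Z_0$ as rings. However, non-tameness gives such a factor only when $A$ is regular; the paper itself notes that otherwise the foundation need not be a subring.

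\emph{Counterexample to the splitting.} Take $A=\Z u\oplus\Z t$ with $u^2=t-2u$ and $t\in Ann(A)$; this ring is even commutative and associative. Here $Ann(A)=\Z t$ and $\Delta(A)=A^2=\Z(t-2u)$, so $A$ is not tame. But $A/A^2\cong\Z\bar u$ and $\bar t=2\bar u$, so no $\Z_0$ splits off.

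\emph{A possible repair.} Fix $t\in Ann(A)\setminus\Delta(A)$ and a nonprincipal ultrafilter $\mathcal U$ on the primes.
\begin{enumerate}
\item $\prod_{\mathcal U}(A\times(\Z/p\Z)_0)\cong A^{\mathcal U}\times F_0$, where $F$ is a nonzero $\Q$-vector space.
\item In $A^{\mathcal U}=A\otimes\Z^{\mathcal U}$, let $D\neq 0$ be the divisible part of $\Z^{\mathcal U}$. The image of $Dt$ in the torsion-free group $A^{\mathcal U}/\Delta(A^{\mathcal U})\cong(A/\Delta(A))\otimes\Z^{\mathcal U}$ is a nonzero divisible subgroup, hence a direct summand.
\item The preimage $C$ of a complement contains $\Delta(A^{\mathcal U})$, so it is an ideal. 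Hence $A^{\mathcal U}\cong C\times(Dt)_0$.
\item Nonzero $\Q$-vector spaces are elementarily equivalent, so Feferman--Vaught gives $A^{\mathcal U}\times F_0\cong C\times(Dt\oplus F)_0\equiv A^{\mathcal U}\equiv A$.
\end{enumerate}
Since $\mathcal U$ was arbitrary, $A\times(\Z/p\Z)_0\models\phi$ for all but finitely many primes $p$, and these rings are not isomorphic to $A$. This shows $A$ is not QFA.
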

\begin{theorem}\label{thm:main2}  Let $A$ be an FDZ-ring. If the annihilator $Ann(A)$ is infinite and $\Delta(A)\neq A$ then $A$ is not bi-interpretable with the ring of integers $\Z$.\end{theorem}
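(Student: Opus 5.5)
We argue by contradiction, using the standard consequence of bi-interpretability with $\Z$ that every model of $\Th(A)$ is determined by its "copy of $\Z$": if $A$ is bi-interpretable with $\Z$ (with parameters $\bar p$), then for every $B\equiv A$ (with the corresponding parameters $\bar q$) the structure $B$ is bi-interpretable with the ring $\tilde Z$ interpreted in $B$, and $\tilde Z\equiv\Z$. In particular, any two models $B_1,B_2\equiv A$ whose interpreted copies of $\Z$ are isomorphic (via an isomorphism respecting the parameters, which we can arrange by passing to a suitable elementary extension) must themselves be isomorphic. Equivalently and more concretely, in $A$ itself, the definable isomorphism $A\cong\tilde A$ makes every automorphism of $A$ fixing $\bar p$ trivial. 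More to the point, every element of $A$ becomes definable from $\bar p$, and this must persist in every elementary extension. Our plan is to exhibit, using the hypotheses $Ann(A)$ infinite and $\Delta(A)\neq A$, non-trivial automorphisms of suitable models that fix any prescribed finite tuple, contradicting this rigidity.

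\textbf{Step 1 (a free direction in the annihilator and a complement outside $\Delta(A)$).} Since $Ann(A)$ is infinite and finitely generated, there is $z\in Ann(A)$ of infinite order. Since $\Delta(A)=Is(A^2)\neq A$, the quotient $A/\Delta(A)$ is a nonzero torsion-free finitely generated group, so there is a nonzero homomorphism $\lambda:A\to\Z$ with $\lambda(\Delta(A))=0$, in particular $\lambda(A^2)=0$.

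\textbf{Step 2 (automorphisms).} For each integer $n$ define $\sigma_n(x)=x+n\lambda(x)z$. This is additive, and it is bijective with inverse $x\mapsto x-n\lambda(x)z'$, where $z'$ is adjusted via $\lambda(z)$; we should choose $\lambda$ with $\lambda(z)=0$ when possible, and otherwise use a nilpotent-shear argument. It is multiplicative because $z$ annihilates everything, so $\sigma_n(x)\sigma_n(y)=xy$, while $\sigma_n(xy)=xy+n\lambda(xy)z=xy$. Hence $\sigma_n$ is a ring automorphism. The same construction works in any $B\equiv A$, using scalars from the model of $\Th(\Z)$ acting on $B$.

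\textbf{Step 3 (contradiction).} The tuple $\bar p$ is finite. We want an automorphism fixing $\bar p$ but moving some element, which contradicts definability of all elements over $\bar p$. To arrange this, we pass to a nonstandard model $B\succ A$, in which bi-interpretability with parameters $\bar p$ still holds. In $B$, $\lambda$ extends (it is definable up to the finitely many generators) to $\lambda:B\to\tilde Z$. We then use instead the map $x\mapsto x+\mu(x)z$ with $\mu:B\to\tilde Z$ a homomorphism vanishing on $\Delta(B)$ and on the $\bar p$, but not identically zero. Such a $\mu$ exists in a sufficiently saturated $B$ because $B/\Delta(B)$ has uncountable rank there while $\bar p$ spans a finite-rank piece.

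\textbf{Main obstacle.} The main difficulty is the last point: ensuring the automorphism fixes $\bar p$ while being nontrivial. This requires either working in a saturated extension and carefully handling the $\lambda(z)\neq0$ case, which is avoided by choosing $z\in Ann(A)$ with $\lambda(z)=0$ where possible, or else invoking the characterization that a bi-interpretable structure has all its elementary extensions rigid over parameters.
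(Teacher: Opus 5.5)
\textbf{There is a genuine gap in Step~3: the contradiction you aim for does not exist.} Your argument rests on a rigidity principle that is false. Bi-interpretability with $\Z$ over $\bar p$ makes every element of $A$ itself $\bar p$-definable, but this does not carry over to a nonstandard $B\succ A$. In $B$, only the elements of $A$ remain $\bar p$-definable. Automorphisms of $B$ over $\bar p$ are governed by automorphisms of the interpreted model $\tilde Z_B\equiv\Z$, and saturated nonstandard models of arithmetic have plenty of those. Already $\Z$, which is trivially bi-interpretable with itself, has saturated elementary extensions with nontrivial automorphisms. So an automorphism of $B$ that fixes $\bar p$ and moves some element proves nothing. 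Nor can you make your shear move an element of $A$. The parameters may well generate $A$ (they usually include generators), and then an additive $\mu$ into the torsion-free group $\tilde Z$ that kills $\bar p$ and $\Delta(B)$ kills all of $A$. Hence $x\mapsto x+\mu(x)z$ fixes $A$ pointwise and moves only nonstandard elements.

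\textbf{What is missing is a $\bar p$-definable set that the shear fails to preserve.} The shear itself is the right object, and it is essentially the paper's automorphism. Write $A=\langle a\rangle\oplus C$ with $\Delta(A)\subseteq C$ and $a+\Delta(A)$ a basis element of $A/\Delta(A)$. The paper defines, on $A^*=A\otimes\Z^*$, the map $\tilde\sigma(ta+c)=ta+\phi(t)b+c$. Here $b\in Ann(A)$ has infinite order, and $\phi:\Z^*\to\Z^*$ is additive, vanishes on a reduced complement $S\supseteq\Z$ of the divisible part, and is not identically zero. This is your $x\mapsto x+\mu(x)z$ with $\mu$ vanishing on all of $A$. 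The definable set the paper uses is $\langle a\rangle$. Since $A$ is bi-interpretable with $\Z$, this cyclic group (an arithmetical set in coordinates) is definable in $A$ over $\bar p$. The same formula then defines $\Z^*a$ in the ultrapower $A^*$. But $\tilde\sigma(ta)=ta+\phi(t)b\notin\Z^*a$ whenever $\phi(t)\neq 0$, because $0\neq\phi(t)b\in C^*$. That is the contradiction.

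\textbf{Your treatment of the case $\lambda(z)\neq 0$ is also not an argument.} For $A=\Z_0$ or $A=\Z_0\times\Z$, every admissible pair has $\lambda(z)\neq 0$, and the shear is then generally not invertible. The paper disposes of this case first. If $Ann(A)\not\subseteq\Delta(A)$, then $A$ is not QFA (Oger; cf.\ Theorem~\ref{thm:Main1}), hence not bi-interpretable with $\Z$ by Nies. In the remaining tame case, $b\in\Delta(A)\subseteq C$, so $\lambda(b)=0$ automatically.
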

For the next definition, we need the notion of the largest scalar ring $P(f_A)$ of $A$. We refer the reader to Section~\ref{sec:bilin} for a definition.
\begin{definition}[Super tame FDZ-rings] Let $A$ be an $FDZ-ring$. We say that $A$ is super tame if the following hold:
\begin{enumerate}
\item The punctured spectrum $\text{Spec}^\circ(P(f_A))$ of $P(f_A)$ is connected.
\item Either $Ann(A)$ is finite or $A=\Delta(A)$.
\end{enumerate}
 \end{definition}

\begin{theorem}\label{thm:Main3} Let $A$ be an FDZ-ring. If $A$ is super tame, then $A$ is bi-interpretable with $\Z$.\end{theorem}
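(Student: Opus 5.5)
We plan to interpret the largest scalar ring $P(f_A)$ in $A$ and show that $A$ is bi-interpretable with $P(f_A)$, possibly up to a finite piece that can be absorbed by parameters. Then $P(f_A)$ is bi-interpretable with $\Z$ by Theorem~\ref{AKNS1}, and bi-interpretability is transitive.

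First we record the basic facts about $P(f_A)$ from Section~\ref{sec:bilin}. It is the largest scalar ring $R$ for which the bilinear map $f_A:A/\Delta(A)\times A/\Delta(A)\to \Delta(A)$ (more precisely, the multiplication restricted appropriately) is $R$-bilinear. We need that $P(f_A)$ is a finitely generated commutative associative unitary ring, that it is FDZ, and that it is interpretable in $A$ with parameters (a basis of $A$) by the standard Myasnikov construction: $P(f_A)$ is a definable subring of endomorphisms, coded by tuples of images of the generators. Since $P(f_A)$ is FDZ, its nilradical $N$ is a finitely generated abelian group. Moreover $N$ is nilpotent, so its torsion part is killed by some $d$. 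We must check $dN=0$; if $N$ is not torsion, this condition fails.

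This is the first delicate point. I would argue that $P(f_A)$ acts faithfully on a finite-index part of $A$, and that infinite nilpotent scalars would force $Ann(A)$ or $A/\Delta(A)$ to carry an extra structure contradicting condition (2). If this does not hold in general, I expect the paper's definition of $P(f_A)$ to guarantee reducedness modulo torsion, and I would cite that. Also, $P(f_A)$ is infinite whenever $A$ is infinite, since $\Z\cdot 1$ embeds in it unless $A$ is finite; the finite case is trivial or excluded. Together with condition (1), Theorem~\ref{AKNS1} then gives $P(f_A)\simeq_{bi}\Z$.

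Next we interpret $A$ in $P(f_A)$. We split into the two cases of condition (2).

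If $A=\Delta(A)$, then $A^2$ has finite index. $A$ is a finitely generated $P(f_A)$-module whose multiplication is $P(f_A)$-bilinear, so by fixing a finite presentation $A$ is interpreted in $P(f_A)$ as a quotient of $P(f_A)^n$ with structure constants. Here the action of $P(f_A)$ on all of $A$ is recovered because $A=\Delta(A)$ is spanned, up to finite index, by products.

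If instead $Ann(A)$ is finite, we take an addition $A_0$ (which is then finite) with foundation $A_f$. The $P(f_A)$-module structure is defined on $A/Ann(A)$, or on the relevant layers, and the finite piece $Ann(A)$ is attached using finitely many parameters.

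In both cases we obtain interpretations $A\to P(f_A)\to A$. The composite isomorphism $A\to\widetilde A$ is definable: each element $a$ is determined by its coordinates with respect to the fixed generators, and the coordinate map is definable because the scalars are definable endomorphisms in the Myasnikov interpretation. The other composite, $P(f_A)\to\widetilde{P(f_A)}$, is definable because $P(f_A)\to \Z\to P(f_A)$ is already definable by Theorem~\ref{AKNS1}.

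The main obstacle is the definability of the coordinate map when $A\neq\Delta(A)$ or when there is torsion. Scalars are only defined through their action on products, so recovering an arbitrary $a\in A$ from its scalar coordinates requires the faithfulness guaranteed by condition (2). I would first try expressing $a$ via the family of maps $x\mapsto ax$ and $x\mapsto xa$ into $\Delta(A)$; this determines $a$ modulo $Ann(A)$, which is finite and hence handled by parameters.
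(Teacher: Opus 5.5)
The step you flag as delicate is a genuine gap, and it cannot be filled: super tameness gives no control over the nilradical of $P(f_A)$. Take $A=\Z[\epsilon]/(\epsilon^2)$. Then $Ann(A)=0$ and $A=A^2=\Delta(A)$, so $f_A$ is just the multiplication $A\times A\to A$. A scalar must act on $\hat{A}=A$ and on $A^2=A$ as multiplication by its value at $1$, so $P(f_A)\cong A$, and likewise $P(A)\cong A$. The only non-maximal prime is $(\epsilon)$, so $\Spec^\circ(P(f_A))$ is connected and $A$ is super tame. However, the nilradical $\Z\epsilon$ is not killed by any $d\geq 1$. By Theorem~\ref{AKNS1}, $A$ is therefore \emph{not} bi-interpretable with $\Z$; the language makes no difference, since $1$ is $0$-definable in $A$. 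So neither your faithfulness argument nor any appeal to the definition of $P(f_A)$ can yield $dN=0$. The statement itself needs the extra hypothesis that $P(f_A)$ satisfies the full criterion of Theorem~\ref{AKNS1}, i.e.\ infinite with nilradical of bounded exponent. It also needs $A$ to be infinite: a finite structure is never bi-interpretable with $\Z$, so that case is excluded rather than trivial. The paper's own proof makes the same leap, concluding that $P(f_A)$ is bi-interpretable with $\Z$ from the connectedness of $\Spec^\circ(P(f_A))$ alone.

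Under that corrected hypothesis your outline is close to the paper's, but two steps need repair. First, $f_A$ is the map $\hat{A}\times\hat{A}\to A^2$ with $\hat{A}=A/Ann(A)$; a map $A/\Delta(A)\times A/\Delta(A)\to\Delta(A)$ is not even well defined. Moreover, $P(f_A)$ acts only on $\hat{A}$ and $A^2$, so your claim that $A$ itself is a finitely generated $P(f_A)$-module when $A=\Delta(A)$ is unjustified. You do not need it. Every FDZ-ring is $0$-interpretable in $\Z$ on $\Z^l\times\Z_{d_1}\times\cdots\times\Z_{d_m}$ via integer structure constants, which is how the paper interprets $A$, bypassing $P(f_A)$ entirely. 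Second, for the definability of the coordinate map, the paper uses the definable copy of $\Z$ inside $P(f_A)$. Acting on $\hat{A}$, it makes $\langle a\rangle+Ann(A)$ definable for each basis element $a$ of infinite order. If $n$ is the exponent of the finite group $Ann(A)$, then $n(\langle a\rangle+Ann(A))=n\langle a\rangle$, so $\langle a\rangle$ is a finite union of cosets of a definable subgroup. When $A=\Delta(A)$ one acts on $A^2$ instead, which has finite index in $A$. This is the concrete content behind your ``handled by parameters.''
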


The proofs of the above theorems are included in Sections~\ref{sec:bilin} and \ref{sec:biint}.

In the later sections, we shall attempt to characterize all rings elementarily equivalent to an FDZ-ring $A$, given some restrictions on $A$. The hope is to provide a complete characterization in an upcoming work.

The proof of the following theorem is discussed in Section~\ref{sec:A-super-tame}.
\begin{theorem}\label{thm:main4} Let $A$ be a super tame FDZ-ring and assume that $B$ is a ring. Then the following are equivalent:
\begin{enumerate}
    \item $A\equiv B$
    \item $B\cong R\otimes_{\Z}A$ for some scalar ring $R\equiv \Z$.   
    \end{enumerate}
\end{theorem}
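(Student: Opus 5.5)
The plan is to derive Theorem~\ref{thm:main4} from the bi-interpretability of $A$ with $\Z$ given by Theorem~\ref{thm:Main3}. The guiding principle is standard: if a structure $\MA$ is bi-interpretable with $\Z$ (with parameters), then every model $\MB\equiv\MA$ is of the form $\MB\cong \Gamma(\tilde R)$. Here $\tilde R\equiv\Z$, and $\Gamma$ is the interpretation of $\MA$ in $\Z$, which we apply to $\tilde R$ using the same formulas with parameters chosen in $\tilde R$. The reason is that the interpretation of $\Z$ in $A$ produces, inside $B$, a ring $\tilde R$ satisfying the same sentences as $\Z$. The definable isomorphism $A\cong\Gamma(\Delta(A))$ is expressed by a first-order statement (existential over parameters), so it transfers to $B$ and yields $B\cong\Gamma(\tilde R)$. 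The work is therefore to make the interpretation $\Gamma$ explicit enough to see that $\Gamma(R)\cong R\otimes_\Z A$.

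For (1)$\Rightarrow$(2), I will unwind the proof of Theorem~\ref{thm:Main3}. Fix a $\Z$-basis (modulo torsion) $a_1,\dots,a_n$ of $(A,+)$ together with generators of the torsion part, and record the structure constants $a_ia_j=\sum_k c_{ij}^k a_k$. The interpretation of $A$ in $\Z$ is just $\Z^n\times T$ ($T$ the finite torsion part), with multiplication given by polynomial formulas in these constants. Interpreting the same formulas in a ring $R\equiv\Z$ gives exactly $R\otimes_\Z A$: it is $R^n\oplus T$ (using $R\otimes T\cong T$, since $R/mR\cong\Z/m$ for $R\equiv\Z$), with the same structure constants. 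Dually, the interpretation of $\Z$ in $A$ goes through the largest scalar ring $P(f_A)$. By the connectedness of $\mathrm{Spec}^\circ(P(f_A))$ and Theorem~\ref{AKNS1}, $P(f_A)$ is bi-interpretable with $\Z$. Because $P(f_A)$ is definable in $A$ (via the bilinear map $f_A$) together with its action, $B$ has a corresponding $P(f_B)\equiv P(f_A)$, and therefore it also contains a definable $R\equiv\Z$. The coordinate map $B\to R^n\oplus T$ is defined by the same formula that expresses the coordinate isomorphism in $A$; the fact that it is a ring isomorphism is a first-order sentence true in $A$, and hence it holds in $B$.

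For (2)$\Rightarrow$(1), I would argue as follows. By the description above, $R\otimes_\Z A$ is interpreted in $R$ by the same formulas that interpret $A$ in $\Z$. Since $R\equiv\Z$, every sentence about the interpreted structure translates into a sentence about the base ring, so $R\otimes_\Z A\equiv \Z\otimes_\Z A=A$. This direction does not even require super tameness.

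The main obstacle is the case split in super tameness, more precisely the treatment of the annihilator. When $A=\Delta(A)$, the whole of $A$ is controlled by products and $P(f_A)$ acts faithfully, so the coordinates are definable from $P(f_A)$. When $Ann(A)$ is finite, the part of $A$ not reached by $A^2$ is detected through the bilinear map $A/Ann(A)\times A/Ann(A)\to \Delta(A)$; we have to check that this recovers the coordinates up to a finite ambiguity which parameters can absorb. I expect this step to be exactly what the proof of Theorem~\ref{thm:Main3} provides. So I will take the parameter-definable isomorphism $A\cong\Gamma(\Z)$ from that proof as a black box, and then carefully verify that $\Gamma(R)\cong R\otimes_\Z A$, paying particular attention to the torsion summand.
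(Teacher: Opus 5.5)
Your route is the paper's: (2)$\Rightarrow$(1) is Lemma~\ref{lem:commutative-Mod}. For (1)$\Rightarrow$(2) you transfer the bi-interpretation of Theorem~\ref{thm:Main3} to $B$. You then identify $\Gamma(R)$, i.e.\ $R^l\times\Z_{d_1}\times\cdots\times\Z_{d_m}$ with the integer structure constants of $A$, with $R\otimes_\Z A$ via $R/dR\cong\Z/d\Z$.

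\textbf{Missing step: regularity.} You pass over the point the paper isolates as Lemma~\ref{lem:regular-biint}. The bi-interpretation of Theorem~\ref{thm:Main3} uses parameters: the elements $a_i,b_j$ in the coordinate map, and possibly parameters in the interpretation of $\Z$ inside $P(f_A)$. Transfer to $B$ the existential sentence \emph{for some $\bar c$, the $\bar c$-interpretation is a ring and the $\bar c$-coordinate formula is a ring isomorphism onto $\Gamma$ of it}. This gives a tuple $\bar c'$ in $B$. But one sentence says only finitely much about the ring $R_{\bar c'}$ it interprets, so it does not give $R_{\bar c'}\equiv\Z$. Hence your claim that the interpretation of $\Z$ in $A$ produces, inside $B$, a ring with the same theory as $\Z$ is not justified by that transfer.

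The fix is regularity: a parameter-free formula $\alpha(\bar x)$ such that every tuple satisfying $\alpha$ in $A$ yields such an isomorphism with a copy of $\Z$. Then for each $\sigma\in\Th(\Z)$, the sentence $\forall\bar x\,(\alpha(\bar x)\rightarrow\sigma_{\bar x})$ holds in $A$, hence in $B$; here $\sigma_{\bar x}$ is $\sigma$ relativized to the $\bar x$-interpreted ring. Any $\bar c'$ with $B\models\alpha(\bar c')$ then gives both $R_{\bar c'}\equiv\Z$ and $B\cong\Gamma(R_{\bar c'})$. Your observation that $P(f_B)\equiv P(f_A)$ is correct, since $P(f_A)$ is $0$-interpretable. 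But it settles the issue only if $\Z$ is interpreted in $P(f_A)$ without parameters, which you do not check.

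\textbf{Misapplied citation.} Theorem~\ref{AKNS1} needs more than connectedness of the punctured spectrum: the ring must be infinite and its nilradical must be annihilated by some $d\ge 1$. The dual numbers $\Z[x]/(x^2)$ have connected punctured spectrum yet are not bi-interpretable with $\Z$. So deriving the bi-interpretability of $P(f_A)$ with $\Z$ from connectedness and Theorem~\ref{AKNS1} is not a valid use of that theorem. That step really rests on Theorem~\ref{thm:Main3} taken as a black box, as it does in the paper.
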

The following theorem is the most technical statement in this paper and builds on several works by this author and A. G. Myasnikov in~\cite{MS, MS2010, MS2021, MS2024} and O. V. Belegradek in~\cite{beleg94}. The proof and all relevant notation and details are included in Sections~\ref{sec:charac} and \ref{sec:conv}. 

\begin{theorem}[Characterization Theorem]\label{thm:char} Let $A$ be an FDZ-ring where $\text{Spec}^0(P_{f_A})$ is connected. If $B$ is any ring $B\equiv A$, then
$B\cong (A(R),B_0, f,g,h)$
for some ring $R\equiv \Z$, some $B_0\equiv A_0$ and choice of cocycles $f,g$ and $h$ as described in Section~\ref{subsec:cocycles}. Conversely, any ring $(A(R),B_0,f,g,h)$ is elementarily equivalent to $A$.  \end{theorem}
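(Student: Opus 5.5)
The plan has two halves: interpretation of the whole configuration in $A$, and reconstruction of an arbitrary model $B \equiv A$ from that interpreted data. The tool throughout is the largest scalar ring $P=P(f_A)$ attached to the bilinear map $f_A$ (Section~\ref{sec:bilin}).

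\textbf{Step 1: the definable skeleton.} Since $\text{Spec}^0(P)$ is connected and $P$ is a finitely generated scalar ring, Theorem~\ref{AKNS1} shows that $P$ is bi-interpretable with $\Z$ (modulo the nilradical condition, which holds automatically for FDZ scalar rings).
\begin{itemize}
\item I expect $P$, together with its action on $A/Ann(A)$, on $\Delta(A)$ and on $A^2$, to be interpretable in $A$ uniformly with parameters. This is the standard Myasnikov construction: $P$ is realized as a ring of definable endomorphisms of the bilinear map.
\item Fix an addition $A_0$, so that $Ann(A)=A_0\oplus(Ann(A)\cap\Delta(A))$ and $A_f\cong A/A_0$.
\item The structure $A$ is then determined by the following data: the $P$-module $A_f$ with its $P$-bilinear multiplication, which is FDZ-finitely axiomatizable by Theorem~\ref{MOS-3}; the abelian group $A_0$; and the extension data gluing $A_0$ to $A_f$. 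That extension data consists of an additive extension cocycle and symmetric and antisymmetric components of the multiplication twisted into the annihilator. These are the cocycles $f,g,h$.
\item I would write the first-order sentences expressing that, after fixing generators $\bar r$ of $P$ and $\bar x$ of $A_f$ modulo $A_0$, these data satisfy the finitely many polynomial identities holding in $A$.
\end{itemize}

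\textbf{Step 2: forward direction.} Let $B\equiv A$. Apply the same formulas (with parameters chosen using saturation or elementarity) to interpret in $B$ a ring $R\equiv P$ and a structure $A(R)$, where $A(R)$ satisfies the same identities over the generator tuple.
\begin{itemize}
\item Because $P$ is bi-interpretable with $\Z$, the ring $R$ has the form $P\otimes \tilde R$ for some $\tilde R\equiv\Z$. Hence $A_f$ transfers to $A(R)\cong R\otimes_P A_f$. This step uses that $A_f$ is finitely presented as a $P$-module with $P$-bilinear product, so the transfer is a uniform interpretation.
\item The definable subgroup playing the role of $Ann(B)$ modulo the $\Delta$-part gives $B_0\equiv A_0$, by the Feferman–Vaught/Szmielew style transfer for abelian groups.
\item Choosing a set-theoretic section $A(R)\to B$ produces the cocycles $f,g,h$ with values in $B_0$ (and in the $\Delta$-part). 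Their cocycle identities follow from associativity-free bilinearity and the additive group law.
\item The resulting isomorphism $B\cong(A(R),B_0,f,g,h)$ is then checked directly.
\end{itemize}

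\textbf{Step 3: converse direction.} Given such data, I would show $(A(R),B_0,f,g,h)\equiv A$. The approach is to take an ultrapower $A^*$ of $A$ and show that both structures embed as elementary substructures, or to run an Ehrenfeucht–Fraïssé/ultraproduct argument over $R\equiv\Z$.
\begin{itemize}
\item The construction is uniform in $R$, $B_0$ and the cocycles. The cocycles are therefore encoded by elements of $R$-definable sets, and the conditions they must meet in Section~\ref{subsec:cocycles} are first-order over $R$.
\item Equivalence then reduces to $R\equiv\Z$ and $B_0\equiv A_0$, via the interpretation.
\end{itemize}

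\textbf{Main obstacle.} I expect the hardest part to be the cocycle classification in Steps 2 and 3. The additive splitting $A=A_f\oplus A_0$ need not lift to a definable splitting in $B$, and $B_0$ may be a non-split elementary extension of $A_0$. One must therefore show that every admissible cocycle $g$ (the "abelian deformation") arising in $B$ is cohomologically of a form definable over $R$, and conversely that every admissible cocycle is realized in some model. I would first treat the case where $A$ is regular and $Ann(A)$ is torsion-free, using $\Ext(\,\cdot\,,\Z)$ computations over $R$ as in \cite{MS2024,beleg94}. I would then handle torsion and non-regularity via $N(A)=L(A)/K(A)$.
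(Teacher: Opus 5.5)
\textbf{Main gap: the converse (your Step 3).} You want to reduce $(A(R),B_0,f,g,h)\equiv A$ to $R\equiv\Z$ and $B_0\equiv A_0$ ``via the interpretation''. That requires the deformation to be uniformly interpretable from $R$, $B_0$ and finitely many parameters, and this fails.

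Only $g$ is finite data: $N$ is finite, so by Lemma~\ref{cocycle:lem} $g$ is fixed by finitely many values. But $h\in S^2(B_0,O(A(R)))$ and $f\in S^2(M(A(R)),Ann(B))$ are arbitrary cocycles on infinite groups. When $B_0\equiv\Z^n$ is not finitely generated and $R$ is non-standard (so $R^+$ is not free), $\Ext(B_0,F)$ and $\Ext(R^k,Ann(B))$ can be enormous, and their classes are not coded by tuples from $R$-definable sets. There is also no definable link between the bare group $B_0$ and $R$. Saying that both structures embed elementarily in an ultrapower of $A$ just restates the goal.

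The paper proves the converse by working in $\aleph_1$-incomplete ultrapowers, in the following steps.
\begin{enumerate}
\item $f^*$ and $h^*$ become coboundaries, because $\Ext$ from a torsion-free group into an algebraically compact (saturated) group vanishes.
\item $\Ext(\Z/e_i\Z,(R^*)^n)\cong(\Z/e_i\Z)^n$ together with Lemma~\ref{cocycle:lem} makes $g^*$ cohomologous to a standard cocycle.
\item The restriction imposed on $g$ in Subsection~\ref{subsec:cocycles} gives, for each $e$, a ring embedding $\theta:A(R)^*\to B^*$ of finite index $d$ prime to $e$ (Lemma~\ref{lem:mono}).
\item Twisting $\theta$ along the index set by the non-standard multipliers $\alpha_i=d_i+q_i^*e$, which no prime divides, and applying Lemma~\ref{satabelian:lem}, yields an isomorphism $A(R)^*\cong B^*$.
\item Then $B\equiv A(R)\equiv A$ by Lemma~\ref{lem:commutative-Mod}.
\end{enumerate}
None of these ingredients appears in your plan.

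\textbf{Forward direction: the cocycles are misidentified.} In the paper the multiplication is never deformed. Since $Ann(B)$ annihilates $B$, the product factors through $\widehat{B}\times\widehat{B}\to\Delta(B)$. Lemma~\ref{lem:6term} identifies this map with the one for $A(R)$ via $\phi$ and $\psi$, so there are no ``components of the multiplication twisted into the annihilator''.

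The cocycles $h,f,g$ are purely additive and sit along the filtration $O\le Ann\le K\le L\le B$:
\begin{itemize}
\item $h$ glues $O(A(R))$ (respectively $\Delta(A(R))$) to $B_0$;
\item $f$ glues $K$ to the free part $M$ of $B/K$;
\item $g$, added to the fixed lift $p$, glues $K$ to the finite part $N$.
\end{itemize}
The argument is then assembled using $\Ext(M\oplus N,K)\cong\Ext(M,K)\oplus\Ext(N,K)$.

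A ``section $A(R)\to B$'' is meaningless, since $A(R)$ is neither a subring nor a quotient of $B$; only $\widehat{A(R)}\cong B/Ann(B)$ and the graded pieces are. Likewise, $A_f=A/A_0$ does not transfer by a uniform interpretation. An addition $A_0$ is in general not definable, because $P(f_A)$ acts only on $\widehat{A}$ and $A^2$. This is exactly why $Ann(B)$ can be a non-split extension of $O(B)$ by an arbitrary $B_0\equiv A_0$, as you note yourself under ``Main obstacle''. What does transfer are the $0$-definable pieces $O$, $\Delta$, $\widehat{A}$ and $A/K$, with compatible isomorphisms.

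\textbf{The nilradical condition.} The condition in Theorem~\ref{AKNS1} is not automatic. For $A=\Z[\epsilon]$ with $\epsilon^2=0$, one has $P(f_A)\cong\Z[\epsilon]$, which has connected punctured spectrum, yet $d\,\Z\epsilon\neq 0$ for every $d\geq 1$. Bi-interpretability of $P(f_A)$ with $\Z$ must therefore be assumed; the paper tacitly assumes it too.
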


\section{Bilinear maps and QFA property for FDZ-rings}\label{sec:bilin}
In this section, we discuss the notion of the bilinear map of a ring and the largest scalar ring of the bilinear
map. Towards the end of the section, we provide a proof of Theorem~\ref{thm:Main1}.

Consider a full non-degenerate bilinear map $f:M\times M \to N$ for some $R$-modules, $M$ and $N$. The mapping $f$ is said to have \textit{finite width} if there is a natural number $S$ such that for every $u\in
N$ there are $x_i$ and $y_i$ in $M$ we have
$$u=\sum_{i=1}^nf(x_i,y_i).$$
The least such number, $w(f)$, is the \textit{width} of $f$.

A set $E=\{e_1,\ldots e_n\}$ is a \textit{complete system}\index{complete system} for a non-degenerate mapping $f$ if $f(x,E)=f(E,x)=0$ implies $x=0$. The
cardinality of a complete minimal system for $f$ is denoted by $c(f)$.

We say that a mapping $f$ is of \emph{finite type} if both $w(f)$ and $c(f)$ are finite.

\begin{prop}[\cite{alexei86, alexei91}]\label{myasnik:bilinT} Let $f:M\times M \to N$ be a full non-degenerate bilinear mapping of finite-type. Then the largest scalar (commutative associative unitary) ring $P(f)$ with respect to which $f$ remains bilinear exists. Furthermore, the ring $P(f)$ is absolutely interpretable in $f$. The formulas of the interpretation depend only on $w(f)$ and $c(f)$. \end{prop}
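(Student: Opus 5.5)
The plan is to follow Myasnikov's construction. We realize $P(f)$ as a ring of symmetric endomorphisms of $M$, and then show that everything involved is first-order definable from $f$ using only $w(f)$ and $c(f)$.

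\emph{Existence.} Consider the set $S(f)$ of pairs $(\alpha,\beta)$ with $\alpha\in\mathrm{End}(M)$ and $\beta\in\mathrm{End}(N)$ such that
\[
f(\alpha x,y)=f(x,\alpha y)=\beta f(x,y)\quad\text{for all } x,y\in M.
\]
Fullness of $f$ means every $u\in N$ is a sum $\sum f(x_i,y_i)$, so $\beta$ is determined by $\alpha$. Non-degeneracy lets us cancel $\alpha$ in identities such as $f(\alpha\alpha' x,y)=f(\alpha' x,\alpha y)=f(x,\alpha'\alpha y)$. This gives $f((\alpha\alpha'-\alpha'\alpha)x,y)=0$, hence $\alpha\alpha'=\alpha'\alpha$.

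It follows that $S(f)$ is a commutative associative unitary subring of $\mathrm{End}(M)$. Both $M$ and $N$ are faithful $S(f)$-modules, and $f$ is $S(f)$-bilinear. Moreover, any scalar ring $R$ acting faithfully on $M$ and $N$ with $f$ being $R$-bilinear maps into $S(f)$ via its action. Hence $S(f)$ is the largest such ring, and we set $P(f)=S(f)$.

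\emph{Interpretation.} Fix a complete system $E=\{e_1,\dots,e_c\}$ with $c=c(f)$. The key step is to encode $\alpha\in P(f)$ by the tuple $(\alpha e_1,\dots,\alpha e_c)\in M^c$. This tuple determines $\alpha$: if $\alpha e_i=\alpha' e_i$ for all $i$, then for every $x$ we get $f((\alpha-\alpha')x,e_i)=f(x,(\alpha-\alpha')e_i)=0$, and symmetrically on the other side, so $(\alpha-\alpha')x=0$.

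A tuple $\bar a$ codes some $\alpha$ precisely when the relation
\[
\Gamma_{\bar a}(x,x')\iff f(x',e_j)=f(x,a_j)\ \text{and}\ f(e_j,x')=f(a_j,x)\ \text{for all } j
\]
defines a total function. In addition, the induced map on $N$ must be well defined. Well-definedness is expressed by quantifying over representations $u=\sum_{i\le w}f(x_i,y_i)$ of length $w=w(f)$, which is first-order exactly because the width is finite.

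Addition and multiplication of codes are then definable: $\alpha\alpha'$ has code $(\alpha\alpha' e_j)_j$, obtained by applying the definable function $\Gamma_{\bar a}$ to the entries of $\bar a'$. Equality of codes is equality of tuples.

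To make the interpretation absolute, that is, parameter free, we quantify existentially over $c$-tuples $E$ that are complete systems. Completeness is a single first-order formula in $c$ variables. We then identify codes relative to different $E$'s via the definable isomorphism between them. This yields a uniform interpretation whose formulas depend only on $w$ and $c$.

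The main obstacle is the step verifying that a tuple $\bar a$ genuinely comes from an endomorphism. We must show that $\Gamma_{\bar a}$ is additive and total, and that the associated $\beta$ on $N$ is consistent. Totality and functionality are first-order in terms of $E$. The danger is that the formulas would have to range over unbounded sums in $N$, and finite width is exactly what bounds these sums to length $w$. I would first prove additivity and functionality from non-degeneracy relative to $E$, and then check that the compatibility condition for $\beta$ needs only width-$w$ representations.
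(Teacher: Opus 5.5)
The paper gives no proof of this proposition; it only cites Myasnikov. Your outline follows the standard argument from those sources: the ring $S(f)$ of $f$-compatible pairs $(\alpha,\beta)$, codes $(\alpha e_1,\dots,\alpha e_c)$ over a complete system, and parameters removed by ranging over all complete systems of size $c(f)$ and identifying codes by their definable graphs. The strategy works, but two steps fail as written.

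\textbf{Commutativity.} Your chain $f(\alpha\alpha'x,y)=f(\alpha'x,\alpha y)=f(x,\alpha'\alpha y)$ uses only symmetry. Deducing $f((\alpha\alpha'-\alpha'\alpha)x,y)=0$ from it presupposes that $\alpha'\alpha$ is symmetric, which is equivalent to what you want to prove. Symmetry alone cannot give commutativity: symmetric integer matrices for the dot product on $\mathbb{Z}^2$ do not commute. You have to use the $\beta$-component:
\[
f(\alpha\alpha'x,y)=f(\alpha'x,\alpha y)=\beta'f(x,\alpha y)=\beta'f(\alpha x,y)=f(\alpha'\alpha x,y),
\]
and then apply non-degeneracy.

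\textbf{The width bound.} Comparing two length-$w$ representations of the same $u\in N$ only makes $\beta$ a well-defined function on $N$. Additivity of $\beta$, which you need for $\beta\in\mathrm{End}(N)$, compares a representation of $u+v$ with those of $u$ and $v$, and your condition does not address that. Instead impose: whenever $\sum_{i\le 2w+1}f(x_i,y_i)=0$, also $\sum_{i\le 2w+1}f(\alpha x_i,y_i)=0$. This extends to vanishing sums of any length by induction. Given a longer vanishing sum, write the sum of its first $w+1$ terms as a length-$w$ sum. The difference of these two expressions is a vanishing relation of length $2w+1$, and replacing the first $w+1$ terms shortens the sum. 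Fullness then makes $\beta(\sum f(x_i,y_i))=\sum f(\alpha x_i,y_i)$ a well-defined endomorphism of $N$.

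\textbf{Symmetry.} You must also impose $f(\alpha x,y)=f(x,\alpha y)$ for all $x,y$, not only against $E$ as $\Gamma_{\bar a}$ does. This is what yields $f(x,\alpha y)=\beta f(x,y)$.

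All of these conditions are first-order in the graph of $\Gamma_{\bar a}$, with bounds depending only on $w(f)$ and $c(f)$. So your uniformity conclusion stands.
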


The following two statements are basic facts  about Noetherian modules.
\begin{lemma}\label{f.g.module:lem} Assume that $R$ is Noethrian and $M$ and $N$ are finitely generated $R$-modules; then $Hom_R(M,N)$ is finitely generated as a $R$-module.\end{lemma}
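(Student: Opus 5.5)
We embed $\mathrm{Hom}_R(M,N)$ into a finite direct sum of copies of $N$ and then use that $R$ is Noetherian.

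Since $M$ is finitely generated, fix generators $m_1,\dots,m_k$ of $M$ over $R$. Consider the evaluation map
$$\Phi:\mathrm{Hom}_R(M,N)\to N^k,\qquad \varphi\mapsto (\varphi(m_1),\dots,\varphi(m_k)).$$

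First, $\Phi$ is $R$-linear. The module structure on $\mathrm{Hom}_R(M,N)$ is $(r\varphi)(x)=r\varphi(x)$, which makes sense because $R$ is commutative; in this paper $R$ is a scalar ring. Linearity is then immediate.

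Second, $\Phi$ is injective. An $R$-homomorphism is determined by its values on a generating set: if all $\varphi(m_i)=0$, then $\varphi\bigl(\sum r_im_i\bigr)=\sum r_i\varphi(m_i)=0$. So $\mathrm{Hom}_R(M,N)$ is isomorphic to a submodule of $N^k$.

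Third, $N^k$ is a finitely generated module over the Noetherian ring $R$, so it is a Noetherian module. Every submodule of a Noetherian module is finitely generated. In particular, $\Phi(\mathrm{Hom}_R(M,N))$ is finitely generated, and hence so is $\mathrm{Hom}_R(M,N)$.

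There is no real obstacle here. The only point to check is the standard fact that finitely generated modules over Noetherian rings are Noetherian. This follows by induction on the number of generators from the short exact sequence
$$0\to R^{n-1}\to R^n\to R\to 0,$$
using closure of Noetherianity under extensions and quotients. One also applies the fact to the finitely generated module $N^k$.
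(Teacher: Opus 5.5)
Your proof is correct: the evaluation map $\varphi\mapsto(\varphi(m_1),\dots,\varphi(m_k))$ embeds $\mathrm{Hom}_R(M,N)$ as a submodule of the Noetherian module $N^k$, which is the standard argument. The paper gives no proof of its own and cites this as a basic fact about Noetherian modules, with $R$ implicitly a commutative scalar ring as you note, so your argument supplies exactly what is assumed there.
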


\begin{cor}Assume $R$ is a Noetherian scalar ring and $M$ is a finitely generated $R$-module. Then $End_R(M)$ (the ring of endomorphisms of the $R$-module $M$) is f.g. as an $R$-module.\end{cor}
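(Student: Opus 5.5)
The corollary follows from Lemma~\ref{f.g.module:lem} with $N=M$. The only extra point is that the $R$-module structure on $End_R(M)$ is the one coming from the $R$-module structure on $Hom_R(M,M)$. So the plan is: identify $End_R(M)$ with $Hom_R(M,M)$ as $R$-modules, then apply the lemma.

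Concretely, the underlying additive group of $End_R(M)$ is by definition $Hom_R(M,M)$. The action of $R$ is $(r\phi)(x)=r\phi(x)=\phi(rx)$, which is again an $R$-linear map because $R$ is commutative. This is exactly the module structure the lemma refers to. Ring multiplication (composition) plays no role in finite generation as an $R$-module, so we ignore it. Lemma~\ref{f.g.module:lem} with $N=M$ then says $Hom_R(M,M)$ is finitely generated over $R$, hence so is $End_R(M)$.

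If one wants the lemma itself justified, here is the argument. Choose generators $m_1,\dots,m_k$ of $M$. The map $Hom_R(M,N)\to N^k$, $\phi\mapsto(\phi(m_1),\dots,\phi(m_k))$, is $R$-linear and injective, since $\phi$ is determined by its values on generators. The module $N^k$ is finitely generated over the Noetherian ring $R$, so it is a Noetherian module, and its submodules are finitely generated. The only step that uses a hypothesis is this Noetherianity: submodules of finitely generated modules need not be finitely generated over an arbitrary ring. Everything else is routine.
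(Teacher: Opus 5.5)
Your proposal is correct and follows the paper's route exactly: the corollary is Lemma~\ref{f.g.module:lem} with $N=M$, and the paper quotes that lemma as a standard fact about Noetherian modules without proof. Your justification of the lemma via the injective $R$-linear map $Hom_R(M,N)\to N^k$, $\phi\mapsto(\phi(m_1),\dots,\phi(m_k))$, into a Noetherian module is the standard argument and is sound. One small correction: it uses finite generation of $M$ and of $N$ as well as Noetherianity of $R$, so Noetherianity is not the only hypothesis the argument needs.
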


 We denote the quotient ring $A/Ann(A)$ by $\barA$, and for any $a\in A$ we denote $a+Ann(A)$ by $\bara$. Consider the full non-degenerate bilinear map:
$$f_A: \barA \times \barA\to A^2, \quad f(\barx,\bary)=xy, \forall x,y\in A$$

\begin{lemma}\label{lem:ft}
 Let $A$ be an $FDZ$-ring, and let $B$ be a ring. Then the following holds:
 \begin{enumerate}
     \item The ideal $A^2$ is absolutely definable in $A$, by an $\Lrings$-formula $\Theta$. Moreover, there exists an $\Lrings$-sentence $\Phi$ such that $A\models \Phi$ and if $B\models \Phi$, then $\Theta$ defines $B^2$ in $B$.
     \item $f_A$ is of finite type and $f_A$ is 0-interpretable in $A$. Moreover, there is a sentence $\Psi$ of $\Lrings$ such that $A\models \Psi$ and if $B\models \Phi\wedge \Psi$, then  $w(f_B)\leq w(f_A)$ and $c(f_B)\leq c(f_A)$. Therefore, the same formulas interpreting  $f_A$ in $A$ interpret $f_B$ in $B$. \end{enumerate}
\end{lemma}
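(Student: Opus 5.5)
The plan is to exploit the fact that $(A,+)$ is finitely generated, so the abelian group $A^2$ is finitely generated as well. Hence there is a uniform bound $n=w(f_A)$ with every element of $A^2$ a sum of at most $n$ products.

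\textbf{Part 1.} Choose generators $u_1,\dots,u_k$ of $(A^2,+)$, each a finite sum of products. Then every element of $A^2$ is a $\Z$-combination of the $u_j$. Rewriting $m\,xy=(mx)y$ and $-xy=(-x)y$, each $\Z$-combination becomes a sum of at most $n:=\sum_j(\text{number of products in }u_j)$ products. Set
$$\Theta(z):=\exists x_1,y_1,\dots,x_n,y_n\; z=\textstyle\sum_{i=1}^n x_iy_i ,$$
which defines $A^2$ in $A$. For $\Phi$, take the sentence saying that the set defined by $\Theta$ is closed under addition:
$$\forall z\,\forall z'\,(\Theta(z)\wedge\Theta(z')\to\Theta(z+z')).$$
It holds in $A$ because $\Theta(A)=A^2$. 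In any ring $B\models\Phi$, the set $\Theta(B)$ contains all products, is closed under sums, and is contained in $B^2$, so it equals $B^2$.

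\textbf{Part 2.} Finite width is what Part 1 gives, since $w(f_A)\le n$. For a finite complete system, note that $Ann(A)$ is the kernel of the map $A\to \mathrm{Hom}(A,A)^2$, $x\mapsto (L_x,R_x)$. Let $a_1,\dots,a_m$ generate $(A,+)$. If $xa_i=a_ix=0$ for all $i$, bilinearity gives $x\in Ann(A)$. So $E=\{\hat a_1,\dots,\hat a_m\}$ is a complete system for $f_A$, and $c(f_A)\le m$.

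The interpretation of $f_A$ in $A$ is absolute:
\begin{itemize}
\item $\barA$ is $A$ modulo the $0$-definable subgroup $Ann(A)=\{x:\forall y\,(xy=yx=0)\}$;
\item $A^2$ is given by $\Theta$;
\item $f_A$ is induced by multiplication, which is well defined on the quotient.
\end{itemize}

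For $\Psi$, I would not name the generators; instead I would quantify them existentially:
$$\Psi:=\exists e_1,\dots,e_c\,\forall x\,\Bigl(\textstyle\bigwedge_{i}(xe_i=0\wedge e_ix=0)\to\forall y\,(xy=0\wedge yx=0)\Bigr),$$
with $c=c(f_A)$. This says that some $c$ elements form a complete system, and $A\models\Psi$. If $B\models\Phi\wedge\Psi$, then $\Theta$ defines $B^2$ with at most $n$ summands, so $w(f_B)\le n$. Also $\Psi$ supplies a complete system of size $c$, so $c(f_B)\le c(f_A)$.

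To get the width bound $w(f_B)\le w(f_A)$ exactly rather than $\le n$, I would redefine $\Theta$ using exactly $w(f_A)$ summands; this still defines $A^2$ by the definition of width. Since the interpreting formulas depend only on these bounds, and the constructions of $\barB$ and $B^2$ are given by the same formulas, the same formulas interpret $f_B$ in $B$.

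The only point needing care is non-degeneracy and fullness of $f_B$. Fullness follows because $\Theta$ defines $B^2$. Non-degeneracy holds by construction, since we quotient by $Ann(B)$. This is where $\Phi$ is essential, and I expect it to be the main (mild) obstacle: ensuring that $\Theta(B)$ is exactly $B^2$ in an arbitrary ring $B$.
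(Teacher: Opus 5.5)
Your proposal is correct and follows the paper's proof in all essentials: a formula $\Theta$ expressing a sum of boundedly many products, and a sentence $\Phi$ forcing $\Theta(B)$ to absorb longer sums (your closure-under-addition sentence is equivalent to the paper's ``every sum of $n+1$ products is a sum of $n$ products''). The rest also matches: $0$-definability of $Ann$, the additive generators of $A$ as a complete system, and $\Psi$ asserting that a complete system of the right size exists. The differences are minor. The paper bounds the width more directly by the number $n$ of additive generators of $A$, via $\sum_i x_iy_i=\sum_j a_js_j$. Your use of exactly $w(f_A)$ summands and $c(f_A)$ witnesses gives the stated inequalities $w(f_B)\le w(f_A)$ and $c(f_B)\le c(f_A)$ exactly, whereas the paper's $n$-summand $\Theta$ a priori only gives $w(f_B)\le n$. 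Just make sure $\Theta$ has at least one summand; this matters only in the degenerate case $A^2=0$.
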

\begin{proof} 

(1.) Let $\{a_1, \ldots, a_n\}$ be a set of generators of $A$ as an abelian group. For any $x $ and $y$ in $A$, there exist $z_1, \ldots, z_n$ in $A$, such that $xy=\sum_{i=1}^n a_iz_i$. Therefore, for any $m$ and any pairs $(x_i,y_i)$, $i=1, \ldots m$ there exist elements $s_i$, $i=1,\ldots n$ such that
$$\sum_{i=1}^m x_iy_i = \sum_{i=1}^n a_i s_i$$
Now, let 
$$\Phi=\forall x_1,y_1, \ldots, x_{n+1},y_{n+1} \exists s_1,t_1,\ldots, s_n,t_n (\sum_{i=1}^{n+1} x_iy_i = \sum_{i=1}^{n} s_it_i)$$
Clearly for any $B$ such that $B\models \Phi$, $B^2$ is defined in $B$ 
by formula $$\Theta(x)= \exists x_1,y_1, \ldots x_n,y_n (x=\sum_{i=1}^{n} x_iy_i)$$

 (2.) First, $Ann(A)$ is absolutely and uniformly definable in $A$. Hence $\hat{A}$ is absolutely and uniformly interpretable in $A$. For $a_i$ as in the proof of (1.), $E=\{\bara_1, \ldots, \bara_n\}$ is a finite complete system for $f_A$. It is easy to see that there exists a formula $\psi(\bar{x})$ such that if $ \psi(\bar{x})$ by a tuple $\bar{x}$, then $\bar{x}$ is a complete system of cardinality equal to the parity of $\bar{x}$. Let $\Psi=\exists \bar{x} \psi(\bar{x})$.  Then by part (1.) the statement is obvious. \end{proof}

\begin{definition}
 Let $A$ be an FDZ-ring. By $P(A)$ we denote the largest subring of $P(f_A)$ making the canonical homomorphism $\pi: A^2 \to \hat{A}$ $P(A)$-linear.
\end{definition}
We note that the subring $P(A)$ exists when $A$ is FDZ and it is a definable subring of $P(f_A)$.  

The following corollary is an immediate consequence of the above statements.

\begin{cor}\label{P(f)-interpret:cor} Let $A$ be an FDZ-ring, and consider the bilinear map $f_A$ and the scalar ring $P(f_A)$. Then

\begin{enumerate}
\item The ring $P(f_A)$ and its action on $\barA$ and $A^2$ are 0-interpretable in $A$. 
\item  The ring $P(A)$ is a definable subring of $P(f_A)$.
\item For any finitely generated ring $B\models \Phi \wedge \Psi$, as in Lemma~\ref{lem:ft}, the same formulas that interpret $P(f_A)$ (or $P(A)$) in $A$ interpret $P(f_B)$ (resp. $P(B)$) in $B$.\end{enumerate}
\end{cor}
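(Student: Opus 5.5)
The plan is to combine Lemma~\ref{lem:ft} with Proposition~\ref{myasnik:bilinT} and check the three items one at a time.

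For item (1), Lemma~\ref{lem:ft}(2) already gives a 0-interpretation of the bilinear map $f_A:\barA\times\barA\to A^2$ in $A$. Concretely:
\begin{itemize}
\item $\barA$ is the quotient of $A$ by the absolutely definable ideal $Ann(A)$;
\item $A^2$ is defined by the formula $\Theta$ from Lemma~\ref{lem:ft}(1);
\item $f_A$ is induced by the multiplication of $A$, which is well defined on $\barA\times\barA$.
\end{itemize}
Lemma~\ref{lem:ft}(2) also says $f_A$ has finite type. So Proposition~\ref{myasnik:bilinT} applies: $P(f_A)$, together with its actions on $\barA$ and on $A^2$, is absolutely interpretable in $f_A$. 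I would recall how that interpretation goes. One realizes $P(f_A)$ as the set of tuples of endomorphisms of $\barA$ and $A^2$ that commute with $f_A$. Each such endomorphism is determined by its values on the finite complete system $E$, and these values range over sums of at most $w(f)$ products. The formulas therefore depend only on $w(f_A)$ and $c(f_A)$. Composing this interpretation with the interpretation of $f_A$ in $A$ gives a 0-interpretation of $P(f_A)$ and its actions in $A$, since a composition of interpretations is an interpretation.

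For item (2), $P(A)$ is the set of elements $\alpha\in P(f_A)$ satisfying
$$\pi(\alpha\cdot u)=\alpha\cdot\pi(u)\quad\text{for all } u\in A^2,$$
where $\pi:A^2\to\barA$ is the canonical map. This is a first-order condition in the interpreted structure $(A,P(f_A),\barA,A^2)$, because $\pi$ is definable and both actions are interpreted. Hence $P(A)$ is definable, and it is closed under the ring operations, so it is a subring. It is also the largest subring with this property, since any subring making $\pi$ linear consists of elements satisfying the condition.

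For item (3), take $B\models\Phi\wedge\Psi$. By Lemma~\ref{lem:ft}, $\Theta$ defines $B^2$, the same formulas interpret $f_B$, and $w(f_B)\le w(f_A)$ and $c(f_B)\le c(f_A)$. The main obstacle is that Proposition~\ref{myasnik:bilinT} yields formulas depending on the exact width and complete-system size, whereas for $B$ we only have upper bounds. I would handle this by checking that Myasnikov's formulas, written for the parameters $w$ and $c$, remain correct for any map of width at most $w$ with a complete system of size at most $c$. This should hold because padding with zero summands, or repeating elements of the complete system, does not change the defined sets.

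Two further points need checking for $B$. First, $f_B$ must be full and non-degenerate so that the proposition applies. This holds for any ring: non-degeneracy comes from quotienting by $Ann(B)$, and fullness holds by the definition of $B^2$. Second, the definition of $P(B)$ from item (2) is uniform, so the same formula carves out $P(B)$ inside $P(f_B)$. Together these give item (3).
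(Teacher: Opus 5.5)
Your proposal is correct and follows the same route as the paper, which simply declares the corollary an immediate consequence of Lemma~\ref{lem:ft} and Proposition~\ref{myasnik:bilinT}; you supply the composition-of-interpretations argument, the linearity condition that carves out $P(A)$, and the check, which the paper omits, that Myasnikov's formulas for parameters $w,c$ still work under the mere upper bounds $w(f_B)\le w(f_A)$, $c(f_B)\le c(f_A)$ supplied by Lemma~\ref{lem:ft}. One small slip in your recollection of the interpretation: the values $\alpha(e_i)$ lie in $\barA$, not among sums of products, and the width bound is what makes the induced action on $A^2$ definable. This is harmless, since you use the proposition as a black box.
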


We are now ready to provide a proof of Theorem~\ref{thm:Main1}.

\noindent \textit{Proof of Theorem~\ref{thm:Main1}.}

\noindent (1.)$\Rightarrow$ (2.): The same as the Proof of Theorem 16 in \cite{Rich}.\medskip

\noindent (2.) $\Rightarrow$ (1.): Consider $A$ and $\barA$ as above, and let $B$ be a finitely generated ring. Since $\barA$ is a finitely generated $P(f_A)$-module,
 there is an $\Lrings$-sentence, say $Gen$, which holds in $A$, such that, if $B\models Gen$, then $B$ is a f.g. $P(f_B)$-module. Assume $B\models Gen$. Then $\barB$ is a finitely generated $P(f_B)$-module. We now prove that $P(f_B)$ is a finitely generated ring. Note that $\barB$ is a finitely generated ring. Let $b_1,\ldots, b_m$ be a set of generators of $\barB$ as a $P(f_B)$-module, and let $B_1, \ldots, B_n$ be a set of generators of $\barB$ as a ring. For each $1\leq i\leq m$, and $1\leq j\leq n$ consider $\alpha_{ij}\in P(f_B)$ such that
 $$B_ib_j= \sum_{k=1}^n \alpha_{ij}b_k.$$
 Let $R$ be the unitary subring of $P(f_B)$ generated by $\alpha_{ij}$. We show that $P(f_B)$ is a finitely generated $R$-module. Firstly, $R\leq P(f_B)$. Since $R$ is finitely generated, it is a Noetherian ring. Furthermore, $End_R(\barB)$ is a finitely generated $R$-module. As $P(f_B)$ embeds in $End_R(\barB)$, it is a Noetherian $R$-module. Therefore, $P(f_B)$ is a finitely generated ring, since $R$ is a finitely generated ring. By \ref{AKNS1}, $P(f_A)$ is QFA. Hence, there exists a sentence $\Phi_2$ such that $A\models\Phi_2$, and if $B\models\Phi_2$, $P(f_A)\cong P(f_B)$. Therefore, $B$ is also a tame FDZ-ring. Now, an application of Theorem~\ref{MOS-4} finishes the proof of this theorem. \qed

\begin{remark}
    For most of the upcoming results, working with $P(f_A)$ suffices. When we need $P(A)$, for example, in Lemma~\ref{lem:6term}, it will be clear that in the statement, one can replace $P(f_A)$ with $P(A)$ without essentially changing the proofs. We do not need to distinguish between $P(f_A)$ and $P(A)$ and duplicate the statements. 
\end{remark}
\section{Some necessary and sufficient conditions for bi-interpretability with \texorpdfstring{$\Z$}{Z}}\label{sec:biint}
In this section, we prove Theorems~\ref{thm:main2} and \ref{thm:Main3}, specifying some necessary and sufficient conditions for the bi-interpretability of an FDZ-ring with $\Z$. \medskip

\noindent \textbf{Theorem~\ref{thm:main2}.} Let $A$ be an infinite $FDZ$-ring. If the annihilator $Ann(A)$ is infinite and $\Delta(A)\neq A$ then $A$ is not bi-interpretable with the ring of integers $\Z$.

\begin{proof} To derive a contradiction, assume that $A$ is bi-interpretable with $\Z$. By a theorem of \cite{Oger2006} (see also the proof of Theorem 16 in~\cite{Rich}), $Ann(A)\leq \Delta (A)$; otherwise, $A$ is not QFA and is not bi-interpretable with $\Z$. Since $A$ is an FDZ-ring, $Ann(A)$ is a finitely generated abelian group that is infinite by assumption. Let $a$ be an element of infinite order in $Ann(A)$. By assumption, $A/\Delta(A)$ is a non-empty free abelian group of finite rank. Choose $a\in A\setminus \Delta(A)$, where $a+\Delta (A)$ is a basis element of $A/\Delta(A)$. Note that $\langle a\rangle\cap Ann(A)\leq \langle a\rangle\cap \Delta(A)=\{0\}$, where $\langle a\rangle=\{za| z\in \Z\}$. Therefore, there exists a subgroup $B$ of $A$ such that $A=\langle a\rangle \oplus B$ are abelian groups. Set $Z=\langle a\rangle$. Since $A$ is bi-interpretable with $\Z$ the $\Z$-submodule $Z$ is definable in $A$. If, in the bi-interpretation, $\Z$ is interpreted in $A$ with constants, then $Z$ is definable with respect to these constants. Let us also choose an element of infinite order $b\in Ann(A) \leq B$. 

Let $(I,D)$ be an $\alpha_1$-incomplete ultrafilter on $I$. By $M^*$ we mean the ultrapower $M^I/D$ for a structure $M$. Note that the additive group of the ultrapower $\Z^*$ can be written as $\Z^*\cong S \oplus \Q^\delta$, where $\Q$ is the additive group of rational numbers,  $\delta$ is an infinite cardinal, and $S$ is reduced (has no divisible elements other than $0$); the copy of $\Z$ inside $\Z^*$ is inside $S$. Therefore, there are uncountably many distinct homomorphisms of $\Z^*$ that are zero on $S$, but restrict to nontrivial automorphisms of $\Q^\delta$. Pick one of such homomorphisms, say $\phi$. Since the additive group of $A$ is a finitely generated abelian group, there is a natural ring isomorphism $A^* \to A \otimes \Z^*$. Also, as abelian groups:
$$A^*\cong Z^* \oplus B^*\cong Z\otimes \Z^* \oplus B\otimes \Z^*$$
Therefore, there is a non-trivial \emph{ring} automorphism $\tilde{\sigma}:A^*\to A^*$ corresponding to the following:
$$\sigma:  Z\otimes \Z^* \oplus B\otimes \Z^* \to Z\otimes \Z^* \oplus B\otimes\Z^*$$
$$\left\{\begin{array}{ll} \sigma(ta) &= ta + \phi(t)b, \forall t\in \Z^* \\
\sigma|_{ B\otimes \Z^*}&=Id_{ B\otimes \Z^*}\end{array}\right.$$ 

Now consider the corresponding automorphism $\tilde{\sigma}: A^*\to A^*$. Observe that $Z^*$ is a definable subset of $A^*$ with respect to the diagonal images of the same constants with respect to which $Z$ is definable in $A$. By construction, $\tilde{\sigma}$ fixes (the diagonal copy of) $A$ in $A^*$, but $\tilde{\sigma}(Z^*)\not\subset Z^*$, contradicting the assumption that $Z^*$ is definable in $A^*$. This finishes the proof. \end{proof}

Next, we prove Theorem~\ref{thm:Main3}, providing two different sufficient conditions for bi-interpretability of an FDZ-ring with $\Z$.
\medskip

\noindent \textbf{Theorem~\ref{thm:Main3}.} Let $A$ be an FDZ-ring. If $A$ has a finite annihilator or $A=\Delta(A)$, and, in addition, $Spec(P(f_A))^0$ is connected, then $A$ is bi-interpretable with $\Z$.
\begin{proof} Assume $A$ is a FDZ-ring and $\Spec^\circ(P(f_A))$ is connected. Since $P(f_A)$ is interpretable in $A$ and $\Spec^\circ(P(f_A))$ is connected, by~\ref{AKNS2}, $\Z$ is bi-interpretable with $P(f)$, so it is interpretable in $A$. We refer to the corresponding coordinate map as $f:A\to \Z$. 

 First, we show that $A$ is interpretable in $\Z$. Let $\tau(A)$ denote the torsion subgroup $A$ which must be finite. In fact, $\tau(A)$ is an ideal of $A$. Since it is a direct sum of cyclic groups, $A$ splits over $\tau(A)$ as an abelian group. By assumption $Ann(A)$ is finite, so $Ann(A)\leq \tau(A)$. Therefore, a ring structure can be defined on $(A/\tau(A)) \times \tau(A)$ , which is isomorphic to $A$, as follows. Let $a_1, \ldots a_l$ be elements of $A$ whose canonical images in $A/\tau(A)$ form a free $\Z$ basis for $A/\tau(A)$ and pick $b_1, \ldots, b_m$ in $\tau(A)$, such that each $b_i$ is a generator of cyclic factor in the invariant factor decomposition of $\tau(A)$. Let $d_i$ be the order of $b_i$ for each corresponding $i$. Then there are unique fixed integers $c_{ijk}, 1\leq i,j\leq l$, $1\leq k \leq l$ and integers $0\leq t_{ijk}< d_k$,  $1\leq i,j\leq l$, $1\leq k \leq m$ such that
$$a_i\cdot a_j= \sum_{k=1}^l c_{ijk} a_k + \sum_{k=1}^m t_{ijk} b_k,~\forall 1\leq i,j\leq l.$$
Since $\tau(A)$ is an ideal of $A$, there are unique tuples of integers $\bar{s}_{ij},\bar{u}_{ji}, \bar{v}_{jj} \in \Z^m$ where for each $1\leq i\leq l$ and $1\leq j\leq m$:
\begin{align*} a_i\cdot b_j&=\sum_{k=1}^m s_{ijk} b_k\\
b_j\cdot a_i&=\sum_{k=1}^m u_{ijk} b_k\\
b_i\cdot b_j&=\sum_{k=1}^m v_{ijk}b_k \end{align*}

Observe that for each $d_i$ chosen above, $\Z_{d_i}=\{0,1,\ldots, d_i-1\}$ is 0-definable in the ring $\Z$ where addition and multiplication are defined appropriately modulo $d_i$. So, there exists a 0-interpretation $\Gamma$ of $A$ in $\Z$, interpreted in $\Z$ as the definable subset $B=\Z^l \times \Z_{d_1}\times \cdots \times \Z_{d_m}$, while the ring structure on $B$ is defined through the tuples of structure constants $\bar{c}, \bar{t}, \bar{s},\bar{u}$ and $\bar{v}$ as clearly specified above. This sets up an obvious interpretation of $A$ in $\Z$. We refer to the corresponding coordinate map as $g:B\to A$.

By Nies~\cite{Nies2007} it is left to prove that if $Ann(A)$ is finite or $\Delta(A)=A$, then there exists an injective map $A\hookrightarrow f^*(\Z)$ (where $f^*(\Z)$ is the copy of $\Z$ interpreted in $A$ by $f$) which is definable in $A$.
 
    First, assume that $Ann(A)$ is finite. Since $P(f_A)$ is bi-interpretable with $\Z$, the canonical copy of $\Z$ is definable in $P(f_A)$. This in turn implies that for any $a\in A$ of infinite additive order, $\langle a\rangle+Ann(A)$ is definable in $A$. Pick elements $a_1,\ldots , a_l$ and $b_1, \ldots, b_m$ from $A$ as in the previous paragraph. By the above, each $A_i=\langle a_i\rangle+Ann(A)$ is definable in $A$. Let $n$ be the exponent of $Ann(A)$. Then, for each $i$, $nA_i \subseteq \langle a_i\rangle$, so $\langle a_i\rangle$ is definable in $A$ as the subset $\{a_i, \ldots, (n-1)a_i\}\cup nA_i$. Now, the map
$A\to \Z^l \times \Z_{d_1}\times \cdots \times \Z_{d_m}$ sending $a\in A$ to the tuple $(c_1(a), \ldots, c_l(a),t_1(a), \ldots, t_m(a))$ where the $c_i$ and $t_i$ are the unique natural numbers (in the case of $t_i$'s unique mod the $d_i$'s) such that $a=\sum_{k=1}^lc_k(a) a_i + \sum_{k=1}^mt_k(a) b_k$ is a bijective map which is also definable in $A$. Therefore, $A$ is bi-interpretable with $\Z$  in this case.

Now, assume that $A=\Delta(A)$. Since $P(f)$ is bi-interpretable with $\Z$ and the action of $P(f)$ on $A^2$ is interpretable in $A$, for any element $a$ of infinite order in $A^2$ the infinite cyclic group $\langle a\rangle$ is definable in $A$. Since $A/A^2$ is a finite abelian group, this simply implies that for any $a\in A$, the subgroup $\langle a\rangle$ is definable in $A$. Similarly to the case above, we have an isomorphism $A\to \Z^l \times \Z_{d_1}\times \cdots \times \Z_{d_m}$ that is definable in $A$.

 \end{proof}   
  
\section{Arbitrary models of \texorpdfstring{$\Th(A)$}{Th(A)} when $A$ is super tame} \label{sec:A-super-tame}
In this section, we attempt a complete characterization of all models of the complete first-order theory of a super tame FDZ-ring $A$.

\begin{lemma}\label{lem:regular-biint} Every finitely generated structure $A$ bi-interpretable with $\Z$ with respect to constants $\bar{c}$ of $A$ is regularly bi-interpretable with $\Z$.\end{lemma}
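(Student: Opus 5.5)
Recall what \emph{regular} bi-interpretability asks for (as in \cite{AKNS} and Khelif's work). The interpretation of $\Z$ in $A$ must be uniform in the parameters $\bar c$: there should be a 0-definable set $\Sigma$ of admissible parameter tuples, containing $\bar c$. For every $\bar d\in\Sigma$, the same formulas with $\bar d$ in place of $\bar c$ should define a copy $\Z_{\bar d}$ of $\Z$ in $A$, together with a coordinate map $A\to A_{\bar d}$ that is an isomorphism. Here $A_{\bar d}$ is the copy of $A$ re-interpreted inside $\Z_{\bar d}$. Finally, the isomorphisms between the various $\Z_{\bar d}$ should be definable uniformly in pairs of parameters. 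The plan is to take the given bi-interpretation $(\Gamma,\Delta,\bar c)$ and cut $\Sigma$ down, by a single first-order formula, to tuples that \emph{behave like} $\bar c$. Then I use that $A$ is finitely generated and $\Z$ is rigid to see that this is enough.

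First, I list the finitely many first-order conditions $\theta(\bar x)$ that $\bar c$ satisfies and that express the bi-interpretation for a general $\bar x$. These are:
\begin{itemize}
\item $\Delta(\bar x)$ defines a structure satisfying the finitely many axioms of discretely ordered rings, plus the fact that every element is a finite sum of $\pm1$ in the sense coded by the interpretation.
\item $\Gamma$ applied inside $\Delta(\bar x)$ yields a ring.
\item The formula defining the coordinate map, with parameters $\bar x$, defines a ring isomorphism $A\to\Gamma(\Delta(\bar x))$.
\item The dual coordinate map on the $\Z$ side is definable and is an isomorphism.
\end{itemize}
Let $\Sigma=\theta(A)$; it contains $\bar c$.

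The key step, and the main obstacle, is to guarantee that $\Delta(\bar d)\cong\Z$ for every $\bar d\in\Sigma$, rather than some nonstandard model. Being standard is not first-order, so I will use that $A$ itself is finitely generated, say by $\bar a$. Since $A$ is bi-interpretable with $\Z$, the element $\bar a$ corresponds under the $\bar c$-coordinate map to a tuple of standard integers. For any $\bar d\in\Sigma$, the $\bar d$-coordinate map sends $A$ isomorphically onto $\Gamma(\Delta(\bar d))$. Its image is then generated by the images of $\bar a$, which involve only finitely many elements of $\Delta(\bar d)$. I then add to $\theta$ the requirement that the definable map $\Delta(\bar x)\to A\to\Delta(\bar c)$ (composing the dual coordinate map with $\Delta(\bar x)$-coordinates) is a ring isomorphism. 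This is first-order in $\bar x$ because $\bar c$ is a fixed parameter. Alternatively, to keep things 0-definable, I would use the fact that every element of $A$ is a term in $\bar a$; the definable embedding $\Delta(\bar d)\hookrightarrow A$ then has image lying in a finitely generated structure. Either way, $\Delta(\bar d)$ embeds definably as a ring into a model interpreted in the standard $\Z$ via $\bar c$, with ordering preserved, and a discretely ordered ring isomorphic to a definable subring containing $1$ is forced to be $\Z$.

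Finally, for $\bar d,\bar e\in\Sigma$, the isomorphism $\Delta(\bar d)\to\Delta(\bar e)$ is the composite of the coordinate maps $\Delta(\bar d)\to A\to\Delta(\bar e)$. This composite is definable uniformly in $(\bar d,\bar e)$ by construction. This gives all clauses of regular bi-interpretability. I expect the standardness step to need the most care, since it must be phrased so that $\Sigma$ is 0-definable. If the direct approach is awkward, I would fall back on the AKNS criterion that bi-interpretability of a finitely generated structure with $\Z$ already implies the regular version, adapting their argument to these parameters.
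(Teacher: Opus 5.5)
There is a genuine gap. Your overall plan matches what is needed: cut out a parameter set $\Sigma$ by first-order conditions, show that every interpreted copy is standard, then use rigidity of $\Z$ to get uniformly definable isomorphisms. But you never carry out the step you flag as the main obstacle, and that step is the whole content of the lemma.

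\begin{itemize}
\item Your first option mentions $\bar c$, so $\Sigma$ is only $\bar c$-definable. This is exactly what regularity excludes.
\item Your second option rests on the claim that a discretely ordered ring sitting definably inside a structure interpreted in the standard $\Z$ must be $\Z$. This is false. $\Delta(\bar d)$ is \emph{always} interpreted in $\Delta(\bar c)\cong\Z$, via $A\cong\Gamma(\Delta(\bar c))$. Yet $\Z$ $0$-interprets nonstandard models of any finite fragment of arithmetic, since every arithmetical structure (e.g.\ a low nonstandard model of PA) is interpretable in $\Z$.
\item Likewise, the internal clause ``every element is a finite sum of $\pm1$'' holds in nonstandard models. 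Only a unital ring embedding into $\Delta(\bar c)$ itself would force $\Delta(\bar d)\cong\Z$, and that brings $\bar c$ back.
\item Falling back on ``bi-interpretability of a finitely generated structure with $\Z$ already implies the regular version'' simply assumes the statement.
\end{itemize}

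The missing idea is how finite generation is actually used. This is Nies's formula $\alpha(\bar x)$ (\cite{Nies2007}, Claim 7.15), which the paper takes as its parameter set before obtaining the uniform isomorphisms from \cite{AKNS}, Lemma 2.30. Take $\bar d$ satisfying the bi-interpretation conditions, including compatibility of the two coordinate maps, and set $M=\Delta(\bar d)$.
\begin{itemize}
\item An element of $M$ is represented by a tuple from $A$ whose entries are \emph{standard} terms in the generators $\bar a$.
\item Transporting through the coordinate maps, every element of $M$ equals $G(k,\bar e)$. Here $G$ is a fixed $M$-definable function (term evaluation composed with the dual coordinate map), $\bar e\in M$ is a fixed tuple coding the images of $\bar a$ and $\bar d$, and $k$ is \emph{standard}.
\item Put into $\alpha$ the sentence ``for every $\bar e$ and $s$ there is $n$ distinct from all $G(j,\bar e)$, $0\le j\le s$'', which holds in $\Z$.
\item Applying it in $M$ to a nonstandard $s$ gives a contradiction, so $M\cong\Z$.
\end{itemize}

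An alternative fix is to take $\Sigma$ to be the $\mathrm{Aut}(A)$-orbit of $\bar c$. It is $0$-definable: the orbit of a generating tuple $\bar a$ is defined by the QFA formula $\varphi(\bar u)$ for $(A,\bar a)$, and $\bar c$ is a tuple of terms in $\bar a$. Automorphisms then transport the whole bi-interpretation.

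Once standardness is secured, your composite-of-coordinate-maps argument is a reasonable substitute for the paper's appeal to \cite{AKNS}, Lemma 2.30. However, there is no map $\Delta(\bar d)\to A$ as you wrote it. The composite has to go through $\Gamma(\Delta(\bar d))$, using that in $\Z$ the isomorphism onto any interpreted copy of $\Z$ is definable.
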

\begin{proof} Let $\alpha(\bar{x})$ be the formula obtained in \cite{Nies2007}, Claim 7.15. First, any $\bar{c}$ as above satisfies such a formula. Assume $\bar{c}'$ is any other tuple such that $A\models \alpha(\bar{c}')$. By Lemma 2.30 of \cite{AKNS}, there exists a formula $\theta(\bar{x},\bar{y})$ in the language of $A$, such that $A\models \theta(\bar{c}, \bar{c}')$ if and only if the assignment $\bar{c}\mapsto \bar{c}'$ extends to an automorphism of $A$. Let $\tilde{c}$ and $\tilde{c}'$ be the images of $\bar{c}$ and $\bar{c}'$ in $A$ under the composition of interpretations, and let $\theta_{\Gamma\circ\Delta}$ be the translation of $\theta$ under the composition of interpretations. Then $A\models \theta_{\Gamma\circ\Delta}(\tilde{c},\tilde{c}')$ if and only if $\tilde{c}\mapsto \tilde{c}'$ extends to an isomorphism $(A,\bar{c})\to (\tilde{A},\tilde{c}')$.\end{proof}

\begin{lemma}\label{lem:commutative-Mod} Let $A$ be an $FDZ$-ring, and let $R$ be a scalar ring such that $R\equiv \Z$. Then $A\equiv R\otimes_\Z A$. \end{lemma}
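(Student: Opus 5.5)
The plan is to interpret both $A$ and $R\otimes_\Z A$ uniformly in their scalar rings, $\Z$ and $R$ respectively, and then transfer elementary equivalence from $R\equiv\Z$ to the interpreted structures.

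First fix a presentation of $A$. Since $(A,+)$ is finitely generated, write $A\cong \Z^l\oplus \Z_{d_1}\oplus\cdots\oplus\Z_{d_m}$ with generators $a_1,\ldots,a_l,b_1,\ldots,b_m$. As in the proof of Theorem~\ref{thm:Main3}, the multiplication is then recorded by integer structure constants $c_{ijk},t_{ijk},s_{ijk},u_{ijk},v_{ijk}$. These constants are $0$-definable in any ring, being sums of $1$'s. This yields a $0$-interpretation $\Gamma$ whose domain is the set $\Z^l\times\Z_{d_1}\times\cdots\times\Z_{d_m}$, where each $\Z_{d}$ is read as $\Z/d\Z$, i.e.\ as the quotient of the universe by the definable congruence $x\equiv y \iff \exists z\,(x-y=dz)$. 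Addition and multiplication are given by the obvious polynomial formulas in the structure constants, and $\Gamma(\Z)\cong A$.

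The key step is to show that the same formulas applied to $R$ give $\Gamma(R)\cong R\otimes_\Z A$. This holds because tensoring is additive and $R\otimes_\Z\Z_d\cong R/dR$. Hence
\[
R\otimes_\Z A\cong R^l\oplus R/d_1R\oplus\cdots\oplus R/d_mR
\]
as abelian groups, with $1\otimes a_i,1\otimes b_j$ playing the role of generators. The multiplication on $R\otimes_\Z A$ is the $R$-bilinear extension $(r\otimes x)(s\otimes y)=rs\otimes xy$. The product of basis elements therefore expands by the same structure constants, and this is exactly the multiplication defined by $\Gamma$ on $\Gamma(R)$. The point needing some care is the torsion part: I must check that the relations $d_k(1\otimes b_k)=0$ are correctly captured by the quotients $R/d_kR$. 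Nothing more is needed, because $A$ is presented as an abelian group by these generators and relations and tensor product is right exact.

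Once this is in place, elementary equivalence follows. For every $\Lrings$-sentence $\varphi$ we have $A\models\varphi$ iff $\Z\models\varphi_\Gamma$, where $\varphi_\Gamma$ is the translation of $\varphi$ under $\Gamma$. Since $R\equiv\Z$, this is equivalent to $R\models\varphi_\Gamma$, which in turn holds iff $\Gamma(R)\cong R\otimes_\Z A\models\varphi$. Thus $A\equiv R\otimes_\Z A$.

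The only delicate point is that $\Gamma$ must be parameter-free and must work uniformly in any ring $R$, not just in $\Z$. In particular, I would use quotients by $dR$ rather than the set $\{0,\ldots,d-1\}$ of representatives, since the latter need not be a transversal of $R/dR$ in a nonstandard model. Once this is taken care of, the argument is routine.
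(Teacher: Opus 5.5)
Your proposal is correct and is essentially the paper's argument. The paper builds a parameter-free interpretation of $A$ in $\Z$ from the integer structure constants, observes that the same formulas read in $R$ produce $R\otimes_\Z A$ (via $R\otimes_\Z \Z/d\Z\cong R/dR$ and the bilinear extension of the product), and then transfers along $R\equiv\Z$, exactly as you do. One small correction: your worry about $\{0,\ldots,d-1\}$ is unfounded for $R\equiv\Z$, since division with remainder by a standard $d$ is a first-order sentence true in $\Z$; hence this set is a transversal of $dR$ in $R$ and $R/dR\cong\Z/d\Z$, which is what the paper uses, though your quotient formulation works equally well.
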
  

 \begin{proof}
 natural numbers $1\leq k \leq m$ and elements $a_i$, $1\leq i\leq m$, such that $A^+\cong \bigoplus_{i=1}^m \langle a_i\rangle$, each $a_i$ is a generator of the cyclic group $\langle a_i\rangle$, $a_i$ has a finite exponent if $1\leq i\leq k$, and it is torsion-free otherwise. Let $T\cong \bigoplus_{i=1}^k \langle a_i\rangle$ and $F\cong \bigoplus_{i=k+1}^m \langle a_i\rangle$. We note that $T$ is a torsion abelian group and $F$ is a free abelian group.

 \noindent \emph{Claim I.} The natural homomorphism $\phi:A\to B=R\otimes_\Z A$, $a\mapsto 1\otimes a$ is an injective homomorphism. In fact, if $\phi: A \to A'$ is an injective homomorphism of FDZ-rings, then $\phi\otimes_\Z R: A\otimes_\Z R\to A'\otimes_\Z R$ is an injective homomorphism.   
 
 \noindent \emph{Proof of Claim I.}  Consider $a_i's$, $T$ and $F$ as described above and assume that $\sum_{i=1}^k 1\otimes z_i a_i+ \sum_{i=k+1}^m 1\otimes z_i a_i=0$ in B for some $z_i\in \Z$. Since $F\otimes R$ is a free $R$-module, we conclude that $z_i=0$, $i=k+1, \ldots m$. Consider $1\leq i\leq k$. Since $1\otimes a_i$ are linearly independent, we must have $1\otimes z_ia_i=0$ in $B$. Since $R\equiv R$, $1$ is not divisible by $d_i$ in $R$. Therefore, $z_i\in d_iR\cap \Z=d_i\Z $. and $\sum_{i=1}^m z_i a_i+ \sum_{i=1}^m z_i a_i=0$ in $A$. The converse is clear.

     In fact, for each $1\leq i\leq k$, for each $1\leq i\leq k$, $R\otimes \langle a_i\rangle \cong R/d_iR$.
 Since $d_iR$ is defined uniformly in $R$,
 $$R\otimes \langle a_i\rangle \cong R/d_iR \equiv \Z/d_i\Z\cong \langle a_i\rangle$$
 and since $\Z/d_i\Z$ is finite $R\otimes \langle a_i\rangle \cong \langle a_i\rangle$. Therefore, the subring of $B$ generated by $a_i$, where $1\leq i\leq m$, is isomorphic to $A$.\qed

Going back to the proof of the lemma, we note that $B=R\otimes_{\Z}A$ is 0-interpretable in $R$ using the same formulas that 0-interpret $A$ in $\Z$. To be more precise, the formula that defines $ \Z_{d_1}\times \cdots \times\Z_{d_k}\times \Z^l$ in $\Z$ defines $M=\Z_{d_1}\times \cdots \times \Z_{d_k}\times R^l$ in $R$. However, this base set is isomorphic to $B$ as an $R$-module. The multiplicative structure in $M$ can now be defined using the integer structure constants that define the multiplicative structure in $A$ in terms of the basis elements $a_i$. This makes $M$ isomorphic to $B$ as rings, so that is the desired interpretation of $B$ in $R$. Since $\Z\equiv R$, and the above interpretations are uniform and parameter-free we can conclude that $A\equiv B=R\otimes_\Z A$. \end{proof}
\begin{theorem} Let $A$ be a super tame FDZ-ring, and assume that $B$ is a ring. Then the following are equivalent:
\begin{enumerate}
    \item $A\equiv B$
    \item $B\cong R\otimes_{\Z}A$ for some scalar ring $R\equiv \Z$.   
    \end{enumerate}
\end{theorem}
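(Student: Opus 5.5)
(2)$\Rightarrow$(1) is Lemma~\ref{lem:commutative-Mod}: if $B\cong R\otimes_\Z A$ with $R\equiv\Z$, then $B\equiv A$. So the work is in (1)$\Rightarrow$(2), and for this I will use the bi-interpretability of $A$ with $\Z$ given by Theorem~\ref{thm:Main3}.

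\textbf{Setting up the bi-interpretation.} From the proof of Theorem~\ref{thm:Main3} we have two interpretations with parameters $\bar c$.
\begin{itemize}
\item The interpretation $\Gamma$ of $A$ in $\Z$ is $0$-definable. It has universe $\Z^l\times\Z_{d_1}\times\cdots\times\Z_{d_m}$, and its operations are given by the integer structure constants $\bar c,\bar t,\bar s,\bar u,\bar v$.
\item The interpretation $\Delta$ of $\Z$ in $A$ goes through $P(f_A)$.
\item The coordinate isomorphism $A\to\Gamma(\Delta(A))$ is definable with parameters $\bar c$, for instance a basis $a_1,\dots,a_l,b_1,\dots,b_m$.
\end{itemize}
By Lemma~\ref{lem:regular-biint} this bi-interpretation is regular. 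Concretely, there is a formula $\alpha(\bar x)$ with $A\models\alpha(\bar c)$ such that for every $\bar c'$ satisfying $\alpha$, the same formulas with $\bar c'$ in place of $\bar c$ give a bi-interpretation. Hence the sentence $\exists\bar x\,\bigl(\alpha(\bar x)\wedge\chi(\bar x)\bigr)$ holds in $A$, where $\chi(\bar x)$ says three things:
\begin{itemize}
\item $\Delta_{\bar x}$ interprets a ring satisfying the finitely many axioms we need;
\item the composite map $A\to\Gamma(\Delta_{\bar x}(A))$ defined from $\bar x$ is a ring isomorphism;
\item the isomorphism sends $\bar x$ to the prescribed tuple of coordinates.
\end{itemize}
Each of these is first-order.

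\textbf{Transferring to $B$.} Now let $B\equiv A$. Then $B$ satisfies the same sentence, so we get a tuple $\bar c'$ in $B$, a structure $R:=\Delta_{\bar c'}(B)$, and a definable isomorphism $B\cong\Gamma(R)$. The theory of $\Z$ transfers along $\Delta$: for every sentence $\sigma$ true in $\Z$, the sentence $\exists\bar x(\alpha\wedge\sigma_\Delta(\bar x))$ holds in $A$, and we may add $\alpha(\bar x)$ uniformly. I would expect $R\equiv\Z$ to follow. The main obstacle is making this uniform in $\bar c'$, because different witnesses might a priori give non-equivalent rings. I would handle this with the formula $\theta$ from the regularity lemma, or more simply by noting that the interpreted $\Z$ is $0$-definable up to definable isomorphism. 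Alternatively, one can choose $R$ as the prime-generated interpretation of $\Z$ via $P(f_B)$, which Corollary~\ref{P(f)-interpret:cor} makes $0$-interpretable with the same formulas. Then a sentence $\sigma$ true in $\Z$ holds in $R$ because $\sigma$ holds in $P(f_A)$'s copy of $\Z$ and so $A\models\sigma_\Delta$.

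\textbf{Identifying $\Gamma(R)$.} Finally, $\Gamma(R)$ is the ring on $R^l\times\Z_{d_1}\times\cdots\times\Z_{d_m}$, using $R/d_iR\cong\Z/d_i\Z$, with the same integer structure constants. As in the proof of Lemma~\ref{lem:commutative-Mod}, this is exactly $R\otimes_\Z A$: tensoring the basis presentation of $A$ with $R$ gives $R$-module $R^l\oplus\bigoplus_i R/d_iR$ and the multiplication extended $R$-bilinearly from the structure constants. Therefore $B\cong R\otimes_\Z A$ with $R\equiv\Z$, which is (2).
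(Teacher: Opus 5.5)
Your proposal follows essentially the same route as the paper's proof. Both use regular bi-interpretability of $A$ with $\Z$ (Theorem~\ref{thm:Main3} with Lemma~\ref{lem:regular-biint}), transfer the describing sentence to $B$ to get $B\cong\Gamma(R)$ for the interpreted ring $R$, and identify $\Gamma(R)$ with $R\otimes_\Z A$ as in Lemma~\ref{lem:commutative-Mod}. You are more careful than the paper about $R\equiv\Z$, and your uniformity worry is settled by observing that $A\models\forall\bar x\,(\alpha(\bar x)\to\sigma_\Delta(\bar x))$ for each $\sigma\in\Th(\Z)$; this transfers to $B$ and applies to $\bar c'$.

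Be aware, however, that both arguments rest entirely on Theorem~\ref{thm:Main3}, whose proof drops the hypothesis $dN=0$ of Theorem~\ref{AKNS1}. The dual numbers $\Z[\varepsilon]$ ($\varepsilon^2=0$) have $Ann(\Z[\varepsilon])=0$ and $P(f_{\Z[\varepsilon]})=\Z[\varepsilon]$ with a one-point punctured spectrum, so they are super tame. Yet by \cite{AKNS} they are not bi-interpretable with $\Z$. As it stands, the argument is only justified for super tame $A$ that really are bi-interpretable with $\Z$, for example when the nilradical of $P(f_A)$ is also torsion.
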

\begin{proof} (2.) $\to$ (1.) was already proven in the previous lemma. To prove the other direction, we first note that $A$ is regularly bi-interpretable with $\Z$. Therefore, there is a first-order sentence that describes the structure of $A$ as a definable subset of $\Z$ with respect to any parameters $\bar{c}$ satisfying a formula recovered in Lemma~\ref{lem:regular-biint}. The same sentence will describe an interpretation $B$ in a model $R$ of a fragment of $Th(\Z)$. Since the interpretation is regular and $B$ satisfies any true sentence in $A$, then $R\equiv \Z$. Therefore, $B\cong \Z_{d_1}\times \cdots \times \Z_{d_k}\times R^l$ as an $R$-module, while the $R$-algebra (ring) structure on $B$ is defined by the same integer structure constants that define the structure of $A$. By the previous lemma, we have $B\cong R\otimes_{\Z} A$.
    
\end{proof}

\section{Characterization of the arbitrary models of \texorpdfstring{$\Th(A)$}{Th(A)} when \texorpdfstring{$P(f_A)$}{P(fA)} is bi-interpretable with \texorpdfstring{$\Z$}{Z}}\label{sec:charac} 
In this section, we provide a proof of one direction (the characterization direction) of the main theorem of this paper: Theorem~\ref{thm:char}. 
\subsection{Abelian deformations of tensor completions of FDZ-rings over non-standard models of \texorpdfstring{$\Z$}{Z}}
Consider an FDZ-ring $A$ and the ideals $K(A)$ and $L(A)$ introduced earlier in the paper.

Let $R\equiv \Z$ and let $A$ be an FDZ-ring. Consider $B=R\otimes_\Z A$. In general, $A$ might have an addition $A_0$. Let $A(R)=R\otimes A$. In Claim I of the proof of Lemma~\ref{lem:commutative-Mod}, we proved that since
$$0\to Ann(A) \to A \to \barA\to 0$$ is a short exact sequence of FDZ-rings, then for $R\equiv \Z$, the following is a short exact sequence of $R$-algebras.
$$0\to Ann(A)\otimes_\Z R \to A\otimes_\Z R \to \barA\otimes_\Z R\to 0$$ 
where $Ann(A(R))=R\otimes Ann(A)$, and $A_0\otimes R$ is, in fact, an addition $(A(R))_0$ of $A(R)$.

\begin{lemma}\label{lem:6term}
    Let $A$ be an FDZ-ring where $\text{Spec}^0(P(f_A))$ is connected, and let $B$ be a ring that is elementarily equivalent to $A$. Then there exists a scalar ring $R\equiv \Z$ such that the following is true:
    \begin{enumerate}
 \item  $Ann(B)$ fits into the following short exact sequence of abelian groups
    $$ 0\to O(A(R)) \to Ann(B)\to B_0\to 0$$
    where $O(A(R))=Ann(A(R))\cap \Delta(A(R))$ and $B_0\equiv A_0$ are abelian groups.
\item There exist isomorphisms, $\eta, \psi, \phi$ and $\mu$ of $R$-algebras: 
that make the following diagram commutative:

$$\begin{tikzcd}[column sep=small]
0 \arrow[r] & O(A(R)) \arrow[r] \arrow[d,"\eta"] &
\Delta(A(R)) \arrow[r] \arrow[d,"\psi"] &
\widehat{A(R)} \arrow[r] \arrow[d,"\phi"] &
A/K(A(R)) \arrow[r] \arrow[d,"\mu"] & 0 \\
0 \arrow[r] & O(B) \arrow[r] &
\Delta(B) \arrow[r] &
\widehat{B} \arrow[r] &
B/K(B) \arrow[r] & 0
\end{tikzcd}$$

where the horizontal arrows represent the restrictions of the obvious canonical mappings.
   \item $B$ fits into the following short exact sequence of rings
    $$0\to Ann(B) \to B \to \widehat{A(R)}\to 0$$

    \end{enumerate}
     
\end{lemma}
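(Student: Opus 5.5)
The plan is to interpret the "rigid" part of $A$ in $\Z$ via $P(f_A)$ and transfer this interpretation to $B$. The non-rigid part is the addition inside the annihilator, and it is handled only at the level of abelian groups.

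\emph{Step 1: produce $R$.} Since $\text{Spec}^0(P(f_A))$ is connected, Theorem~\ref{AKNS1} makes $P(f_A)$ bi-interpretable with $\Z$. By Lemma~\ref{lem:regular-biint} this bi-interpretation can be taken regular, with parameters satisfying a fixed formula. By Corollary~\ref{P(f)-interpret:cor}, the ring $P(f_A)$, its actions on $\barA$ and on $A^2$, and $P(A)$ are all $0$-interpretable in $A$ by formulas that depend only on $w(f_A)$ and $c(f_A)$. Lemma~\ref{lem:ft} supplies the sentences $\Phi\wedge\Psi$, and $B\equiv A$ satisfies them. Hence the same formulas interpret $P(f_B)$ together with its actions on $\widehat B$ and $B^2$. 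The sentence expressing the regular bi-interpretation then holds in $P(f_B)$, which therefore has the form $P(f_A)\otimes_\Z R$ for some $R\equiv\Z$. This $R$ is obtained as the interpreted copy of $\Z$, and the identification uses the argument of Theorem~\ref{thm:main4}.

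\emph{Step 2: build the four isomorphisms.} In $A$, the ideals $\Delta(A)$, $O(A)=Ann(A)\cap\Delta(A)$ and $K(A)$, and the quotients $\barA$ and $A/K(A)$, are all finitely generated $P(f_A)$-modules (or, for the torsion parts, finite modules). Following the proof of Theorem~\ref{thm:Main3}, each of them is interpretable in $P(f_A)$, hence in $\Z$, as $\Z^l\times\prod\Z_{d_i}$ with integer structure constants. The coordinate maps are definable in $A$ with parameters, because the cyclic $\Z$-submodules $\langle a\rangle$ (for $a\in A^2$, and then up to the finite index of $\Delta(A)$ over $A^2$) are definable through the $P(f_A)$-action. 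The key point is that $\Delta(A)$ and $\barA$ avoid the problematic addition: $\Delta(A)$ is a finite extension of $A^2$, and $\barA$ embeds via $f_A$ into $\mathrm{Hom}(\barA,A^2)$. The canonical maps of the $6$-term sequence are definable, so a single first-order sentence asserts that these definable maps are isomorphisms onto the coordinatized models, commuting with the maps $O\to\Delta\to\widehat{\ }\to{}/K$. Transferring this sentence to $B$ and using Claim I of Lemma~\ref{lem:commutative-Mod} to identify the coordinatized objects over $R$ with $O(A(R))$, $\Delta(A(R))$, $\widehat{A(R)}$ and $A(R)/K(A(R))$ yields $\eta,\psi,\phi,\mu$ and the commutative diagram of part (2).

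\emph{Step 3: parts (1) and (3).} Part (3) follows from part (2): $B\to\widehat B\cong\widehat{A(R)}$ has kernel $Ann(B)$. For part (1), set $B_0=Ann(B)/O(B)$ and use $O(B)\cong O(A(R))$ from $\eta$. It remains to show $B_0\equiv A_0$. Both $Ann$ and $\Delta$ are uniformly definable: $\Delta$ is defined by bounded-width sums of products together with the finite index condition of Lemma~\ref{lem:ft}. Hence $Ann/O$ is uniformly interpretable, and $Ann(A)/O(A)\cong A_0$ by the definition of an addition. Therefore $B_0\equiv A_0$ as abelian groups.

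I expect the main obstacle to be Step 2 for $\Delta$ and $\widehat{\ }$. One must check that the definability of $\langle a\rangle$ really holds without interference from the annihilator, which is exactly the mechanism by which Theorem~\ref{thm:main2} fails in general. My first approach is to work only inside $A^2$ and $\barA$, where the $P(f_A)$-action is faithful, and to pass to $\Delta$ by finite index and to $\widehat{\ }$ by the quotient map. In particular, no coordinatization of all of $A$ is ever attempted.
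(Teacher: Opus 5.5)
Your proposal follows the same route as the paper. You obtain $R\equiv\Z$ by transferring the uniform interpretation of $P(f_A)$, together with its bi-interpretation with $\Z$, from $A$ to $B$. You identify $O$, $\Delta$, $\widehat{(\cdot)}$ and the quotient by $K$ with their tensor extensions over $R$. You put $B_0=Ann(B)/O(B)$ and get $B_0\equiv A_0$ from uniform interpretability of $Ann/O$, and (3) follows from (2).

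You are more explicit than the paper, and better in one place. You coordinatize only $\Delta(A)$ and $\barA$, never $A$, using nothing but the definable prime subring $\Z\cdot 1\subseteq P(f_A)$. You then get commutativity by writing the canonical maps as integer matrices inside the transferred sentence. The paper instead argues from $P(f_B)$-linearity of $\Delta(B)\to\widehat B$, which by its own Remark really needs $P(B)$; your version avoids this because the canonical map is automatically $\Z$-linear. Your description of $O(A)$, $\Delta(A)$, $K(A)$ as $P(f_A)$-modules is inaccurate: $P(f_A)$ acts only on $\barA$ and $A^2$, and $K(A)$ contains the addition. Since only $\Z\cdot 1$ is used, nothing depends on this.

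\textbf{The gap.} There is a genuine gap in the first sentence of Step~1, and the paper's proof has the same gap. As quoted, Theorem~\ref{AKNS1} gives bi-interpretability of $P(f_A)$ with $\Z$ only if, besides connectedness of $\text{Spec}^0(P(f_A))$, the ring $P(f_A)$ is infinite and $dN=0$ for its nilradical $N$ and some $d\ge 1$. The lemma's hypothesis gives neither.

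Take the dual numbers $A=\Z[\varepsilon]/(\varepsilon^2)$, an FDZ-ring with $Ann(A)=0$.
\begin{itemize}
\item The pairs $(\alpha,\beta)$ with $\alpha(x)y=x\alpha(y)=\beta(xy)$ are exactly multiplications by elements of $A$, so $P(f_A)\cong A$.
\item Its spectrum is homeomorphic to $\text{Spec}(\Z)$, so $\text{Spec}^0$ is connected.
\item Its nilradical $\Z\varepsilon$ is torsion-free, so Theorem~\ref{AKNS1} says $P(f_A)$ is \emph{not} bi-interpretable with $\Z$.
\end{itemize}

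Everything downstream rests on that bi-interpretation: the regular bi-interpretation, the definability of $\Z\cdot 1$ and of the subgroups $\langle a\rangle$, and hence all your coordinate maps. So your argument, like the paper's, proves the lemma only under the stronger hypothesis that $P(f_A)$ is bi-interpretable with $\Z$, which is the hypothesis named in the section title. You should either assume that explicitly or add the missing conditions of Theorem~\ref{AKNS1}. Apart from this, the argument is sound.
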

\begin{proof}
  Since the ideals $Ann(A)$ and $\Delta(A)$ are uniformly 0-definable in A, and $P(f_A))$ is bi-interpretable with $\Z$, we have that $O(B)=Ann(B)\cap \Delta(B)$ is naturally isomorphic to its tensor extension $O(A(R))\cong O(A)\otimes R$ for some $R\equiv \Z$ (see the previous section for complete arguments in a similar case). Also, since $f_A$ is bilinear with respect to $P(f_A)$, we note that $f_B$ is bilinear with respect to this ring $R$. Moreover, $$A_0\cong Ann(A)/O(A)\equiv Ann(B)/O(B)\cong B_0$$  since all the definitions and required interpretations are uniform. This concludes the proof of (1.).

Proof of (2.) also follows the same path as the Proof of (1.). Exactness of the sequences is clear. Since $\Z$ is bi-interpretable with $P(f_A)$, the two-sorted structure $\langle \Z, f_A\rangle$ is interpretable in $A$ uniformly. Therefore, $\langle R, f_B\rangle$ is interpretable in $B$ with the same formulas, hence $\Delta(B)\cong R\otimes \Delta(A)$ and $\widehat{B}\cong \widehat{A(R)}$. Since the isomorphisms are obtained from the action of $P(f_B)$ on $\widehat{B}$ and $\Delta(B)$, and by definition $P(f_B)$ makes the restriction $\Delta(B) \to \widehat{B}$ $P(f_B)$-linear, the middle square must be commutative. The other two squares are commutative as a consequence.

Statement (3.) is a corollary of (1.) and (2.).

\end{proof}

Next, we wish to describe the ring structure of a ring $B\equiv A$ in terms of the ring $A(R)$, when $R$ is the scalar ring $R\equiv \Z$ recovered in Lemma~\ref{lem:6term} using insights from the same lemma.

\subsection{Construction of abelian deformations using symmetric 2-cocycles} \label{subsec:cocycles}
In the following lines, we try to illuminate the structure of $B\equiv A$ by deforming the structure of $A(R)$ via symmetric cocycles based on the information we gained in Lemma~\ref{lem:6term}. For details on extension theory and its relationship to the second cohomology group, we refer the reader to~\cite{robin}. Here, we only need to understand abelian extensions and their relationship with the functor $\Ext$. In this subsection, we keep the discussion somewhat informal. We shall prove the necessary statements in the next subsection.

Consider  $B$ as in Lemma~\ref{lem:6term} and the group $B_0$ as described in (1.) of the same lemma. Let $T$ be the torsion subgroup of $O(A(R))$, which is a finite abelian group. Hence, there exists a torsion-free subgroup $F_1$ of $O(A(R))$ such that $O(A(R))\cong T\oplus F$. Now 
$$\Ext(B_0, O(A(R)))\cong \Ext(B_0, F\oplus T)\cong \Ext(B_0, F)\oplus \Ext(B_0, T) \cong \Ext(B_0, F).$$. Therefore, the structure of $Ann(B)$ can be described as the set $B_0\times O(A(R))$, with the addition defined by
$$(b_1,o_1)\boxplus(b_2,o_2)=(b_1+b_2, o_1+o_2+h(b_1,b_2))$$
where $b_i\in B_0$, $o_i \in O(A(R))$, and some $h\in S^2(B_0, F)$. It is known that $\Ext(B_0,F)\neq 0$.

The cocycle $h$ recovered above also defines an extension of $\Delta(A(R))$ by $B_0$, which, for obvious reasons, we call $K(B)$. We also denote the copy of $\Delta(A(R))$ in $K(B)$ by $\Delta(B)$, again for obvious reasons. 

Next, recall that  $M=M(A(R))=A(R)/L(A(R))$ and $N=N(A(R))=L(A(R))/K(A(R))$. Since $N$ is a finite abelian group and $M(A(R))$ is torsion-free, there is a subgroup $A_1\cong M(A(R))$ of $A(R)/K(A(R))$ such that $A/K(A(R) = A_1 \oplus N(A(R))$ as an internal direct sum. In fact, $A_1$ can become a free $R$-module of the same rank as the free abelian group $M$. To avoid introducing a new notation, we assume $M(A(R))=A_1$. 

Consider two types of symmetric 2-cocycles:

\begin{enumerate}
\item $f\in S^2(M(A(R)), D)$
\item $g\in S^2(N, D)$ 
\end{enumerate}
There are multiple constraints that one needs to impose on the cocycle $g$, simply because $N=L(A)/K(A)$ is a finite uniformly interpretable quotient and both $(L(A))/\Delta(A)$ and $K(A)/\Delta(A)$ are uniformly interpretable quotients isomorphic to a free abelian group $A_0$. We elaborate on these constraints in the following lines.

Consider the following extension in $A(R)$ 
$$0\to K(A(R))\to L(A(R)) \to N\to 0$$
 Let $q\in S^2(N, K(A(R))$ be a cocycle that defines this extension up to equivalence. Let $\pi:A(R) \to \widehat{A(R)}$ be the canonical epimorphism. The homomorphism $\pi$ induces a homomorphism: $$\pi_*: \Ext(N, K(A(R)) \to \Ext(N, \pi(K(A(R))).$$ Identifying $\widehat{A(R)}$ and $\widehat{B}$ via the isomorphism $\phi$ and considering the canonical epimorphism $\pi': B \to \widehat{B}$, there exists a cocycle $p'\in \Ext(N, K(B))$ such that $\pi'_*(p)=\pi_*(q)$. It is also clear that for any choice of $g\in S^2(N, D)$, assuming that $D=Ann(B)$, we have $p+g \in S^2(N, K(B))$ and $\pi_*'(p+g)=\pi_*'(p)=\pi_*(q)$. 

As mentioned above, there is another important restriction that we need to impose on the cocycle $p+g$ or $g$ for that matter. We require that $L(B)/\Delta(B)$ is a torsion free abelian group elementarily equivalent to $K(B)/\Delta(B)\cong B_0$; that is, both are abelian groups elementarily equivalent to the free abelian group $A_0$. 
Unfortunately, we cannot provide an algebraic criterion here. Although we know that Szmielew’s Invariants are the same for elementarily equivalent abelian groups, these invariants do not necessarily define the structure of an abelian group unless it is finitely generated. In this case, there is an algebraic criterion based on Smith Normal Form and the minors of the relation matrix. Here, we deal with arbitrary models. However, we can impose a similar and very important restriction on $g$ as follows.  Since $N$ is finite and all the ideals involved are uniformly definable,  $$L(A(R)^*)/K(A(R)^*)\cong N^*\cong N\cong L(A)/K(A).$$ Pick $a_1, \ldots, a_m \in L(A)$ whose cosets in $L(A)/K(A)\cong N$ provide the primary factor decomposition of $N$, where the period of each $a_i$ is modulo $K(A)$. By the structure theorem for finitely generated abelian groups, $a_i$ can be chosen so that $\{e_ia_i+\Delta(A):i=1,\ldots, m\}$ can be extended to a basis of $Q(A)=K(A)/\Delta(A)$. This statement is not necessarily first order. However, we can express that for any number $d\geq 2$, there are elements in $a_1, \ldots, a_m$ in $L(A)$ whose images provide a primary decomposition for $L(A)/K(A)$ with periods $e_1|e_2|\cdots|e_m$, respectively, and the images of $\{e_ia_i+\Delta(A)|i=1,\ldots,m\}$ in $Q/dQ$ are linearly independent.  The same sentence will imply in $B$ that there are elements  $b_1, \ldots, b_m$ of $L(B)$ which generate $L(B)/K(B)$ and the images of $e_ib_i+\Delta(B)$ in $B_0/dB_0)\cong A_0/dA_0$ are independent. By Lemma~\ref{cocycle:lem} and the fact that $\Ext(\oplus_{i=1}^m G_m,D)\cong \oplus_{i=1}^m\Ext( G_m,D)$, the above will give a restriction on the definition of $g$.     

Now, define a new structure on the set $B=(M(A(R))\oplus N) \times K(B)$, where $\oplus$ is the direct sum of the groups and $\times$ is simply the Cartesian product of the sets, using the cocycles $f$ and $g$ as follows: 
 
Any $b\in B$ can be written as $$b=(a_1,a_2,k),$$ where $a_1\in M(A(R))$, $a_2\in N$, and $k\in K(B)$. Now, define an additive structure on $B$ by 
\begin{align*} (a_1,a_2,k) &\boxplus (a_1',a_2',k')\\
&=(a_1+a_1', a_2+a_2', k_1+k_1'+f(a_1,a_1')+p(a_2+a_2')+g(a_2+a'_2))\end{align*}
The above abelian group structure is well defined, since it is constructed as an abelian extension of two well defined abelian groups.

To define a ring product on $B$, we can identify $B/D$ with $A(R)/Ann(A(R))$. Also, there is a copy of $\Delta(A(R))$ in $K(B)$ as an additive subgroup, which we will formally call $\Delta(B)$ for now. Let us pick $b, b' \in B$ and let $a$ and $a'$ be preimages of $b+Ann(A(R))$ and $b'+Ann(A(R) \in \widehat{A(R)}$, respectively. Then $a\cdot a'$ is in $\Delta(B)$, where $\cdot$ is the product in $A(R)$. So we define
 $$b\boxdot b'=a\cdot a'$$

 The structure defined above is denoted by $B=(A(R),B_0,f,g,h)$.
\subsection{Proof of the Characterization Theorem}
\begin{definition}[Abelian deformations of $A(R)$]
Let $A$ be a FDZ-ring, let $R$ be a model of  $Th(\Z)$, and let $A(R)=R\otimes_\Z A$. Assume $B_0$ is an abelian group elementarily equivalent to an addition $A_0$ of $A$. The structure $(A(R),B_0,f,g,h)$ described in Subsection~\ref{subsec:cocycles} is called an abelian deformation of $B=A(R)$ for some scalar ring $R\equiv \Z$.
\end{definition}
\begin{lemma} The structure $B=(A(R),B_0,f,g,h)$ is a well-defined ring.\end{lemma}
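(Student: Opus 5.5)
The plan is to verify the ring axioms in two layers: first that $(B,\boxplus)$ is an abelian group, then that $\boxdot$ is well defined and biadditive with respect to $\boxplus$. No further identities are needed, since the paper's notion of ring requires only bilinearity.

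\emph{Additive structure.} $(B,\boxplus)$ is built in two steps. First, $K(B)$ is the extension of $\Delta(A(R))$ by $B_0$ determined by the symmetric cocycle $h$, with $O(A(R))\le\Delta(A(R))$. Second, $B$ is an extension of $K(B)$ by $M(A(R))\oplus N$, determined by $f$ on the $M$-component and by $p+g$ on the $N$-component. In both steps we use the standard fact (see \cite{robin}) that a symmetric 2-cocycle $c\in S^2(G,D)$ with values in an abelian group $D$ makes $G\times D$, with $(x,d)+(y,e)=(x+y,d+e+c(x,y))$, into an abelian group:
\begin{itemize}
\item associativity is exactly the cocycle identity $c(x,y)+c(x+y,z)=c(y,z)+c(x,y+z)$;
\item commutativity is the symmetry of $c$;
\item we may normalize $c(0,x)=0$, so the neutral element is $(0,0)$ and the inverse of $(x,d)$ is $(-x,-d-c(x,-x))$.
\end{itemize}
Two points need checking. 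The sum of the two cocycles on $M(A(R))\oplus N$, namely $f$ composed with the projection to $M$ plus $(p+g)$ composed with the projection to $N$, is again a symmetric cocycle on the direct sum, since cocycle conditions are additive. The formula in the definition should be read with the cocycle evaluated on pairs, $p(a_2,a_2')+g(a_2,a_2')$; we fix this reading first. We also record the canonical additive epimorphism $\rho:B\to M(A(R))\oplus N\cong A(R)/K(A(R))$, whose kernel is $K(B)$.

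\emph{Multiplication.} We define an additive map $\theta:B\to\widehat{A(R)}$ that lifts the identification $B/Ann(B)\cong\widehat{A(R)}$ from Lemma~\ref{lem:6term}(3). On $K(B)$ we send $\Delta(B)\cong\Delta(A(R))$ via $\psi^{-1}$ followed by the canonical map to $\widehat{A(R)}$, and we send the $B_0$-part to $0$. On the remaining coordinates we use the splitting $A(R)/K(A(R))=M\oplus N$ and chosen set-theoretic lifts. Additivity of $\theta$ holds because every cocycle value lies in $O(A(R))+B_0$, and that group maps into $Ann$, hence to $0$ in $\widehat{A(R)}$. This is the key observation: cocycles $f,h$ take values in $D=Ann(B)$, and $p$ differs from a lift of $\pi_*(q)$ by annihilator terms, since $\pi'_*(p)=\pi_*(q)$. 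We then set
$$b\boxdot b'=\psi(\tilde a\cdot\tilde a'),$$
where $\tilde a,\tilde a'\in A(R)$ are any preimages of $\theta(b),\theta(b')$.

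\emph{Well-definedness and bilinearity.} The product is independent of the choice of preimages because the preimages differ by elements of $Ann(A(R))$. Biadditivity then follows from the additivity of $\theta$, the bilinearity of the product of $A(R)$, and the fact that $\psi$ is a group isomorphism. The product lands in $\Delta(B)\subseteq K(B)\subseteq B$, so $\boxdot$ is a binary operation on $B$. Finally, every element of $O(A(R))+B_0$ is annihilated by $\boxdot$, so $Ann(B)\supseteq D$. This matches Lemma~\ref{lem:6term}, although it is not needed for the ring axioms.

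The main obstacle is the additivity of $\theta$ on the $N$-coordinate. There the cocycle $p+g$ takes values in $K(B)$, not just in $D$, and its $\Delta$-component must map under $\psi^{-1}$ and $\pi$ exactly to the cocycle $\pi_*(q)$ describing $L(A(R))$ inside $\widehat{A(R)}$. I would handle this by choosing $p$ as a cocycle-level lift, normalized so that $\pi'\circ p=\phi\circ\pi\circ q$ holds on the nose and not just in $\Ext$. This is possible because $\pi'$ restricted to the relevant part is surjective. With that choice, $g$, having values in $D$, does not affect $\theta$, and additivity follows.
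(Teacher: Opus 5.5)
Your proposal is correct and follows essentially the paper's proof: the addition is an abelian extension built from symmetric cocycles, and the product is defined by lifting through the identification $B/D\cong\widehat{A(R)}$ to $A(R)$, with bilinearity holding because the cocycle contributions are annihilated. Your explicit additive map $\theta$ and your cocycle-level normalization of $p$, whose values lie in $K(B)$ rather than in $D$, handle a point the paper's left-bilinearity computation skips, since it displays only the $f$ and $g$ terms; note also that additivity of $\theta$ requires the lift over $M(A(R))$ to be additive, which you can arrange because $A(R)=R\otimes_\Z A$ splits additively over $M(A(R))$.
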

\begin{proof} As already mentioned in Section~\ref{subsec:cocycles}, the addition is well defined since we are just defining an abelian extension:
$$0\to K(B)\to B \to A(R)/K(A(R)) \to 0$$
via the symmetric 2-cocycles $f$ and $g$, where the abelian group $K(B)=\Delta(A(R))+B_0$ is also a well-defined group as an abelian extension of $\Delta(A(R)$ by $B_0$ defined by the cocycle $h$. 

The product is well-defined modulo $D$. Any $b\in D$ annihilates all elements of $B$.  We just need to check bi-linearity. We check it on the left:

\begin{align*} d\boxdot(b\boxplus b')&= d\cdot  (b+b'+f(b_1,b_1')+g(b_2+b'_2))\\
&=d\cdot b +d\cdot b' +d\cdot(f(b_1,b_1')+g(b_2+b'_2)) \\
&=d\cdot b +d\cdot b'\\
&=d\cdot b \boxplus d\cdot b'~ (\textrm{since $d\cdot b, d\cdot b' \in R^2\subset \Delta(A(R))$}) \\
&=d\boxdot b \boxplus d\boxdot b'\end{align*}
\end{proof} 

\begin{theorem}[Characterization Theorem] Let $A$ be an FDZ-ring where $\text{Spec}^0(P(A))$ is connected. If $B$ is any ring $B\equiv A$, then
$B\cong (A(R),B_0, f,g,h)$
for some ring $R\equiv \Z$, some $B_0\equiv A_0$, and choice of cocycles $f,g$ and $h$ as described in Section~\ref{subsec:cocycles}.   
\end{theorem}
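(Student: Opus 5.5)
The plan is to start from Lemma~\ref{lem:6term}. Given $B\equiv A$, it supplies a scalar ring $R\equiv\Z$, which is the interpreted copy of $P(f_A)$ read inside $B$ by the uniform formulas of Corollary~\ref{P(f)-interpret:cor}. It also supplies $R$-algebra isomorphisms $\eta,\psi,\phi,\mu$ identifying $O(B)$, $\Delta(B)$, $\widehat B$ and $B/K(B)$ with the corresponding objects of $A(R)$, and an exact sequence $0\to Ann(B)\to B\to \widehat{A(R)}\to 0$. We take $B_0:=Ann(B)/O(B)$, which is elementarily equivalent to $A_0$ by part (1) of that lemma. The proof then rebuilds $B$ as an additive group in three layers, reading off a cocycle at each layer, and finally checks that the product agrees with $\boxdot$.

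\textbf{Layer 1: $h$.} The sequence $0\to O(B)\to Ann(B)\to B_0\to 0$ is an abelian extension. Choose a set-theoretic section $s_0\colon B_0\to Ann(B)$ with $s_0(0)=0$, and let $h(b,b')=s_0(b)+s_0(b')-s_0(b+b')$. This is a symmetric $2$-cocycle with values in $O(B)\cong_\eta O(A(R))$. Using the splitting $O(A(R))=T\oplus F$ and $\Ext(B_0,T)=0$ (I would justify this from $B_0$ being torsion-free), we may modify $s_0$ so that $h$ takes values in $F$. Since $K(B)=\Delta(B)+Ann(B)$, the same section shows that $K(B)$ is the extension of $\Delta(B)\cong_\psi\Delta(A(R))$ by $B_0$ defined by $h$. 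This matches the model $K(B)$ of Subsection~\ref{subsec:cocycles}.

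\textbf{Layer 2: $f$ and $g$.} Next use $B/K(B)\cong_\mu A(R)/K(A(R))=M(A(R))\oplus N$. Choose a section over $M(A(R))$, which is a free $R$-module in the model $A_1$, and a section over the finite group $N$. For the $N$ part, take lifts $b_1,\dots,b_m\in L(B)$ given by the first-order sentence described in Subsection~\ref{subsec:cocycles}, so that the images of $e_ib_i$ are independent modulo $d$ in $B_0$ for every $d$. The resulting symmetric cocycles split as $f\in S^2(M(A(R)),D)$ plus a term $p+g$ over $N$. Here $p$ is the cocycle transported from $q$ (the one defining $L(A(R))$ over $K(A(R))$), and $g$ takes values in $D=Ann(B)$. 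Such a $p$ exists because $\phi$ intertwines the projections $\pi$ and $\pi'$, so $\pi'_*$ of the class of the $N$-part equals $\pi_*(q)$. The difference is then killed by $\pi'$, hence lands in $Ann(B)$, and that difference is the $g$ we keep. The independence condition gives exactly the required restriction on $g$. The map $B\to (M(A(R))\oplus N)\times K(B)$ determined by these sections is then an additive isomorphism onto the $\boxplus$-structure.

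\textbf{Layer 3: product.} Since $Ann(B)$ annihilates $B$, the product $bb'$ depends only on the images of $b,b'$ in $\widehat B$, and it lies in $B^2\subseteq\Delta(B)$. By the commutative middle square of Lemma~\ref{lem:6term}, transporting along $\phi$ and $\psi$ gives $\psi^{-1}(bb')=a\cdot a'$, where $a,a'\in A(R)$ are lifts of $\phi^{-1}(\hat b),\phi^{-1}(\hat b')$. This is precisely the definition of $\boxdot$, so the additive isomorphism is a ring isomorphism.

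The main obstacle I expect is Layer 2. Compatibility of the chosen sections with $\psi$ and $\phi$ must be shown simultaneously, so that a single identification of $K(B)$ with the model built in Layer 1 works for both $f$ and $p+g$. Moreover, the restriction on $g$ is only available as a family of first-order statements, one for each $d$. I would handle this by fixing the identification of $K(B)$ first and choosing the lifts over $N$ inside $\psi$-preimages. Any remaining discrepancy would then be absorbed into $g$ via the coboundary freedom in $S^2(N,D)$.
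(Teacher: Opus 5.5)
Your three layers are the paper's own route. The paper also reads off $h$ from $0\to O(B)\to Ann(B)\to B_0\to 0$; your justification of $\Ext(B_0,T)=0$ is correct, since $T$ is bounded and $B_0$ is torsion-free, torsion-freeness being a first-order property of $A_0$. It then reuses $h$ for $K(B)$ over $\Delta(B)$ and splits $\Ext(M\oplus N,K(B))$ to get $f$ and $p+g$ with $\pi'_*(p+g)=\pi_*(q)$.

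\textbf{Gap in Layer~3.} The paper never checks the multiplication, so Layer~3 is an addition, but its justification fails. Take $b,b'$ with $\pi'(b)=\phi(\pi(a))$ and $\pi'(b')=\phi(\pi(a'))$. The commutative middle square plus $\phi$ being a ring isomorphism only give $\pi'(\psi(aa'))=\phi(\pi(a)\pi(a'))=\pi'(bb')$. That is, $\psi(aa')\equiv bb'$ modulo $\ker\pi'\cap\Delta(B)=O(B)$, not equality. The diagram cannot do better. For any $R$-linear $\delta\colon\Delta(A(R))\to O(B)$ vanishing on $O(A(R))+\Delta(A(R))^2$, the map $\psi+\delta$ is again an algebra isomorphism making every square commute, yet it changes $\boxdot$. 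A concrete case is the ring with $\Z$-basis $x,z,w$, $x^2=z$, $xz=w$ and all other products zero, with $B=A$, $\phi=\mathrm{id}$, $\delta(z)=w$; then $(\psi+\delta)^{-1}(x\cdot x)\neq x\cdot x$.

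What you actually need is that $(\phi,\psi)$ is an isomorphism of bilinear maps, i.e.\ $\psi(xy)=f_B(\phi(\hat x),\phi(\hat y))$ for all $x,y\in A(R)$. This must come from the proof of Lemma~\ref{lem:6term}, where the isomorphisms arise from the uniform interpretation of $\langle R,f_B\rangle$, not from commutativity of the diagram. You also need your additive isomorphism to induce exactly $\phi$ modulo $D$ and $\psi$ on $\Delta$.

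\textbf{Fixing Layer~2.} Both requirements are secured by one choice in Layer~2, which also answers the compatibility worry you flagged. Take the sections over $M$ and over $N$ to be lifts through $\pi'$ of $\phi\circ\pi\circ s$, where $s$ ranges over sections in $A(R)$. Take $s$ additive on $M$; this is possible because $L(A)$ has a free complement $M'$ in $A$, so $A(R)=L(A(R))\oplus(R\otimes M')$. Then $\pi'$ kills the $M$-cocycle, which is the actual reason $f$ can be taken $D$-valued; you assert this without argument.

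\textbf{The independence condition.} The sentences in Subsection~\ref{subsec:cocycles} give, for each $d$ separately, some tuple in $L(B)$ with the independence property modulo $d$. A single tuple that works for every $d$ would need saturation, which an arbitrary $B$ lacks. You do not need it. The restriction on $g$ is a family of first-order conditions on the resulting structure, and these hold automatically once that structure is isomorphic to $B\equiv A$. So arbitrary lifts over $N$ suffice.
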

\begin{proof}
    We have already proven that $B\equiv A$ satisfies conditions (1.)-(3.) of  Lemma~\ref{lem:6term}. We shall refer to the maps and constructions mentioned in that lemma. Let us denote $(A(R),B_0,f,g,h)$ by $C$, where $f$, $g$ and $h$ are yet to be defined in terms of those defining the structure of the ring $B$. By construction, there is a cocycle $h'$ that defines the lower short exact sequence of the following diagram. Therefore, there exists a cocycle $h\in S^2(B_0, O(A(R))$ defining the upper exact sequence which induces an isomorphism $Ann(C)\to Ann(B)$ making the following diagram commutative.
$$\begin{tikzcd}[column sep=small]
0 \arrow[r] & O(A(R)) \arrow[r]  \arrow[d,"\eta"] &
Ann(C) \arrow[r] \arrow[d]&
B_0 \arrow[r]\arrow[d,"id"] &0\\
0 \arrow[r] & O(B) \arrow[r] &
Ann(B) \arrow[r]  &
B_0 \arrow[r]  & 0
\end{tikzcd}$$
Since $\Delta(C)=\Delta(A(R))$ and the isomorphism $\psi: \Delta(A(R))\to \Delta(B)$ agrees with the isomorphism $\eta$ and $K(B)=\Delta(B)+Ann(B)$, in fact $K(B)$ is defined as an extension of $\Delta(B)$ by $B_0$ using the cocycle $h'$, therefore, the cocycle $h$ defines $K(C)$ in a manner that there exists an isomorphism $\theta:K(C) \to K(B)$ making the following diagram commutative:

$$\begin{tikzcd}[column sep=small]
0 \arrow[r] & \Delta(A(R)) \arrow[r]  \arrow[d,"\psi"] &
K(C) \arrow[r] \arrow[d, "\theta"]&
B_0 \arrow[r]\arrow[d,"id"] &0\\
0 \arrow[r] & \Delta(B) \arrow[r] &
K(B) \arrow[r]  &
B_0 \arrow[r]  & 0
\end{tikzcd}$$

We note that $C/K(C)$ can be identified with $A(R)/K(A(R))$ and the isomorphism $\mu:C/K(C) \to B/K(B)$ respects the maximal torsion subgroup $N$. Therefore, $B/K(B)\cong \mu(N) \oplus M(B)$, where $M(B)\cong M(A(R))=M(C)$ as free $R$-modules. If one picks an $R$-basis, $m_1, \ldots ,m_k$ for $M(C)$ and the corresponding basis $\mu(m_1), \ldots \mu(m_k)$, then the inverse images of these under the corresponding canonical epimorphisms $\pi:C\to\widehat{C}$ and $\pi':B\to \widehat{B}$ generate subgroups $M_1(C)$ and $M_1(B)$ of $C$ and $B$, respectively, which contain the corresponding annihilators. Therefore, given a cocycle $f'\in S^2(M(B), Ann(B)$, defining the extension of in the lower row of the following diagram, there exists a cocycle $f\in S^2(M(C), Ann(C)$ defining the extension in the upper row, and an isomorphism $\theta_1$ making the diagram commutative:
$$\begin{tikzcd}[column sep=small]
0 \arrow[r] & K(C) \arrow[r]  \arrow[d,"\theta"] &
K(C)+M_1(C) \arrow[r] \arrow[d, "\theta_1"]&
M(C) \arrow[r]\arrow[d,"\mu|_{M(C)}"] &0\\
0 \arrow[r] & K(B) \arrow[r] &
K(B)+M_1(B) \arrow[r]  &
M(B) \arrow[r]  & 0
\end{tikzcd}$$
Since we have 
\begin{equation}\label{eqn:exts}
\begin{split}
\Ext(B/K(B),K(B))&\cong \Ext(M(B)\oplus N(B), K(B))\\
&\cong  \Ext(M(B), K(B))\oplus  \Ext(N(B), K(B))
\end{split}
\end{equation}

it is left to consider the extension:
$$0\to K(B)\to L(B) \to N(B)\to 0$$
Since $N(B)$ is the maximal torsion subgroup of $B/K(B)$, the cocycle, say $g'$, whose class in $\Ext (N(B), K(B))$ partially determines $L(B)$ will produce the subgroup $L(B)=Is(Ann(B) + B^2)$. Consider the corresponding extension in $A(R)$:
$$0\to K(A(R))\to L(A(R)) \to N\to 0$$
and let $q\in S^2(N, K(A(R))$ be the cocycle defining this extension up to equivalence and
Let $\pi:A(R) \to \widehat{A(R)}$ be the canonical epimorphism and $\pi': B \to \widehat{B}$ the one for $B$. The homomorphism $\pi$ induces a homomorphism: $$\pi_*: \Ext(N, K(A(R)) \to \Ext(N, \pi(K(A(R))).$$ Identifying $\widehat{A(R)}$ and $\widehat{B}$ via the isomorphism $\phi$, there exists a cocycle $p'\in \Ext(N(B), K(B))$ such that $\pi'_*(p')=\pi_*(q)$. It is also clear that for any choice of $g'\in S^2(N(B), Ann(B))$, we have $p'+g' \in S^2(N, K(B))$ and $\pi_*'(p'+g')=\pi_*'(p')=\pi_*(q)$. Finally, it is clear that there exists a cocycle $p\in S^2(N, K(C))$, and some cocycle $g\in S^2(N, Ann(C))$ such that $\pi''_*(p+c)=\pi'_*(p'+g')$, where $\pi'':C\to \widehat{C}$ is the canonical epimorphism and an isomorphism $\theta_2: L(C) \to L(B)$, making the following diagram commutative
$$\begin{tikzcd}[column sep=small]
0 \arrow[r] & K(C) \arrow[r]  \arrow[d,"\theta"] &
L(C) \arrow[r] \arrow[d, "\theta_2"]&
N \arrow[r]\arrow[d,"\mu|_{N}"] &0\\
0 \arrow[r] & K(B) \arrow[r] &
L(B) \arrow[r]  &
N(B) \arrow[r]  & 0
\end{tikzcd}$$

This finishes the proof considering the isomorphism in Equation~\ref{eqn:exts} above.
\end{proof}

\section{The converse of the characterization theorem}\label{sec:conv}
In this section, we prove the converse of our characterization theorem. The argument uses standard ultrapower constructions. The construction goes through two stages of taking ultrapowers.

\begin{lemma} Let $A$ be an $FDZ$-ring, $(I,D)$ a non-principal ultrafilter, and $A^*$ denote the corresponding ultrapower of $A$. Then $A^*\cong A(R)$, where $R=\Z^*$.\end{lemma}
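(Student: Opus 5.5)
The plan is to show that ultrapowers commute with tensoring by $\Z$ when the module is finitely generated (indeed finitely presented). Fix a decomposition of the additive group $A^+=\bigoplus_{i=1}^m\langle a_i\rangle$ as in the proof of Lemma~\ref{lem:commutative-Mod}, with $a_1,\dots,a_k$ of finite orders $d_1,\dots,d_k$ and $a_{k+1},\dots,a_m$ of infinite order. Write the multiplication through integer structure constants, $a_ia_j=\sum_l c_{ijl}a_l$, with $c_{ijl}$ taken modulo $d_l$ when $l\le k$. I will define
$$\Lambda:R\otimes_\Z A\to A^*,\qquad r\otimes a\mapsto [(r_\iota a)_{\iota\in I}],$$
where $r=[(r_\iota)]\in \Z^*=R$. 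This assignment is $\Z$-bilinear in $(r,a)$, so $\Lambda$ is a well-defined additive homomorphism, and it does not depend on the representative $(r_\iota)$.

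Next I check multiplicativity. The product on $R\otimes A$ is $(r\otimes a)(s\otimes b)=rs\otimes ab$, and in $A^*$ products are computed coordinatewise: $(r_\iota a)(s_\iota b)=r_\iota s_\iota\, ab$. So $\Lambda$ is a ring homomorphism.

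For bijectivity I use the decomposition. As an additive group, $R\otimes A\cong\bigoplus_{i\le k}R/d_iR\oplus R^{m-k}$. On the other side, each element of $A^*$ is a class $[(x_\iota)]$ with $x_\iota=\sum_i z_{i,\iota}a_i$, where the coefficients are unique (modulo $d_i$ for $i\le k$). This gives an additive isomorphism
$$A^*\cong\prod_\iota\Bigl(\bigoplus_{i}\langle a_i\rangle\Bigr)\Big/D\cong\bigoplus_i\Bigl(\prod_\iota\langle a_i\rangle/D\Bigr),$$
because ultrapowers commute with finite direct sums. Moreover $\prod\Z/d_i\Z/D\cong\Z/d_i\Z$, since the factor is finite, and $\Z/d_i\Z\cong R/d_iR$ as in Claim~I of Lemma~\ref{lem:commutative-Mod}; also $\prod\Z/D=R$. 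Under these identifications $\Lambda$ sends the $i$-th summand $R\otimes\langle a_i\rangle$ identically onto the $i$-th ultrapower factor, so $\Lambda$ is a bijection.

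The only step needing care is the torsion summands. There I must check that $r\otimes a_i\mapsto[(r_\iota a_i)]$ has kernel exactly $d_iR\otimes a_i$. This holds because $[(r_\iota)]\in d_iR$ iff $d_i\mid r_\iota$ for $D$-almost all $\iota$, which is Łoś's theorem applied to the formula $\exists y\,(x=d_iy)$. The statement does not require the ultrafilter to be $\aleph_1$-incomplete, and $R\equiv\Z$ follows from Łoś as well, so $A^*\cong A(R)$ holds as rings.
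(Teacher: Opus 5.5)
Your proof is correct and follows essentially the same route as the paper: decompose $A^+$ into cyclic summands, note that ultrapowers commute with finite direct sums, that finite cyclic summands are unchanged, and that infinite cyclic ones become $R=\Z^*$. Defining the natural map $\Lambda$ first makes multiplicativity immediate instead of relying on matching integer structure constants as the paper does. Your \L{}o\'s argument on the torsion summands also supplies a step the paper leaves implicit; what you mean there is that $r\mapsto\Lambda(r\otimes a_i)$ has kernel exactly $d_iR$, since $d_iR\otimes a_i$ is already $0$ in $R\otimes\langle a_i\rangle$.
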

\begin{proof} Pick the elements $a_i$ of $A$ as in Claim 1 in the proof of Lemma~\ref{lem:commutative-Mod}. The $(\bigoplus_{i=1}^m \langle a_i\rangle)^*\cong \bigoplus_{i=1}^m \langle a_i\rangle^*$. Each $\langle a_i\rangle^*$ has an additive group isomorphic to $R^+$ or $\Z/d_i\Z$. Both are cyclic $R$-modules. The ring structure is induced as an $R$-algebra structure defined using the same integer structure constants that define the product in $A$ in terms of elements $a_i$. Therefore, there is a natural isomorphism between $A^*$ and $A(R)$.\end{proof}
\begin{lemma} Let $B\equiv A$, where $A$ is an FDZ-ring, and let $(I,\D)$ be a non-principal ultrafilter. Using the identification of $B$ with its diagonal image in $B^*$, we have the following:
\begin{enumerate}
  \item[(a)] $Ann(B^*)=Ann(B)^*$
  \item[(b)] $(B^*)^2=(B^2)^*$
  \item[(c)] $\Delta(B^*)=(\Delta (B))^*$
\end{enumerate}
\end{lemma}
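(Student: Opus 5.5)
The plan is to use Łoś's theorem throughout, together with the fact that each of the ideals involved is uniformly $0$-definable in every model of $\Th(A)$. We interpret $X^*$, for a definable $X\subseteq B$, as the ultraproduct $\prod_I X/\D$ viewed inside $B^*$. This coincides with the set defined in $B^*$ by the same formula. The key principle is the following: if $\chi(x)$ is a formula such that, for every model $C\models\Th(A)$, $\chi$ defines a given ideal $J(C)$, then by Łoś $J(B^*)=\chi(B^*)=\prod_I\chi(B)/\D=J(B)^*$. Note that $B^*\equiv B\equiv A$, so $\chi$ does define $J(B^*)$ in $B^*$.

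For (a), take $\chi(x)=\forall y\,(xy=0\wedge yx=0)$. This defines the annihilator in any ring, so (a) follows at once from the principle. For (b), we use Lemma~\ref{lem:ft}(1). We have $A\models\Phi$, hence $B\models\Phi$ and $B^*\models\Phi$. The formula $\Theta$ therefore defines $B^2$ in $B$ and $(B^*)^2$ in $B^*$, which gives $(B^*)^2=\Theta(B^*)=\Theta(B)^*=(B^2)^*$.

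For (c), the difficulty is that $Is(I)=\{x:\exists n\ge1,\ nx\in I\}$ is a priori an infinite disjunction, so it is not obviously first-order. We first show that in $A$ there is a fixed $e\ge1$ with $Is(A^2)=\{x: e x\in A^2\}$. This holds because $Is(A^2)/A^2$ is the torsion subgroup of the finitely generated group $A/A^2$, hence finite, and we can take $e$ to be its exponent. Next we record the sentence
\[
\forall x\,\bigl(\Theta(nx)\rightarrow\Theta(ex)\bigr)
\]
for each $n\ge1$. Each of these sentences is true in $A$ and therefore in every model of $\Th(A)$. It follows that in any $C\equiv A$ we have $\Delta(C)=\{x: \Theta(ex)\}$: any $x$ with $nx\in C^2$ satisfies $ex\in C^2$, and the reverse inclusion is trivial. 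Thus $\Delta$ is uniformly defined by $\chi(x)=\Theta(ex)$, and the principle gives $\Delta(B^*)=\Delta(B)^*$.

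The main obstacle is this first-order expressibility of $Is(\cdot)$ in (c). The argument relies on the uniform bound $e$ transferring via the infinitely many sentences above. This requires only $B\equiv A$, not that $B$ be FDZ, which is the point of the lemma. An element of $B^*$ whose coordinates need unbounded $n$ is ruled out by exactly this uniform bound.
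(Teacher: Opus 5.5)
Your proof is correct. It uses the same two ingredients as the paper's proof: the finite width of $B^2$ for (b), and a uniform bound on the exponent of $\Delta(B)/B^2$ for (c). The difference is how they are deployed.

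The paper argues elementwise in $B^I$. For (b) it writes almost every coordinate of an element of $(B^2)^*$ as a sum of $n$ products and reassembles these into $n$ products in $B^*$. For (c) it multiplies a representative coordinatewise by the exponent $m$ of $Is(B^2)/B^2$.

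You instead observe that $Ann$, $(\cdot)^2$ and $\Delta$ are cut out by fixed formulas in every model of $\Th(A)$, and let \L{}o\'{s}'s theorem handle all three items at once. The paper's computations are essentially \L{}o\'{s} unwound by hand for $\Theta$ and for $\Theta(mx)$.

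Your packaging buys two things. First, it makes explicit why the width and the exponent are finite for a ring $B$ that is only elementarily equivalent to $A$ and need not be FDZ. This comes from $B\models\Phi$ (Lemma~\ref{lem:ft}) and from the transferred sentences $\forall x\,(\Theta(nx)\rightarrow\Theta(ex))$; the paper's proof uses both facts without comment. Second, it shows the lemma is an instance of a general fact: any ideal uniformly definable across models of $\Th(A)$ commutes with ultrapowers. The paper's version is more concrete and avoids having to isolate the defining formulas.
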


 \begin{proof} Statement (a) is clear. For (b), we first understand that the elements of the Cartesian power $B^I$ are functions $x: I \to B$. Now, the inclusion $\geq$ follows from the fact that $(B^I)^2$ is generated by $x\cdot y=z$, $x, y\in G^I$, where $z(i)=x(i)\cdot y(i)\in B^2$ for $\D$-almost every $i\in I$. But then, the equivalence class of $x\cdot y$ is in $(B^2)^*$. The other inclusion follows from the fact that $B^2$ is of finite width. Indeed, let $z\in G^I$ be such that the equivalence class $\tilde{z}\in (B^2)^*$ and assume that the width of $B^2$ is $n$, that is, every element can be written as a sum of at most $n$ elements of the form $x\cdot y$, $x,y\in B$. Then for $\D$-almost every $i\in I$, $z(i)=\sum_{k=1}^n x_k(i)\cdot y_k(i)$. Define $z_k\in B^I$, $k=1,\ldots n$ by $z_k(i)=x_k(i)\cdot y_k(i)$. Obviously $\widetilde{z_k}\in (B^*)^2$, $k=1, \ldots ,n$, and $\tilde{z}=\widetilde{z_1}+\cdots+ \widetilde{z_n}$. This implies the result.      

For (c)
\begin{align*}
\tilde{x}\in Is((B^2)^*)&\Leftrightarrow n\tilde{x}\in (B^2)^*, \text{ for some $n\in \N^+$}\\
&\Leftrightarrow \widetilde{nx}\in (B^2)^*,  \text{ for some $n\in \N^+$}\\ &\Rightarrow \tilde{x}\in (Is(B^2))^*.   
\end{align*}
If $\tilde{x}\in (Is(B^2))^*$, then $n(i)x(i)\in B^2$ for some $n(i)\in \N^+$ for $\D$-almost all $i\in I$. But then $mx(i)\in B^2$ where $m$ is the exponent of $Is(B^2)/B^2$, which implies that $\tilde{x}\in Is((B^2)^*)$.
\end{proof}
\begin{lemma}[Exercise 11.1.5 of~\cite{robin}]\label{cocycle:lem}
Let 
\[
0\to D \xrightarrow{\mu} E \xrightarrow{\varepsilon} G\to 0
\]
be a group extension with abelian kernel $ D$, and let $ G = \langle a \rangle $ be cyclic of order $ e$. Let $a= \varepsilon(x) $ with \( x \in E \). A transversal function \( \tau: G \rightarrow E \) is defined by

\[
\tau(ia) = ix \quad \text{for } 0 \leq i < e.
\]

Then the values of the corresponding cocycle $g$ are

\[
g(ia, ja) = 
\begin{cases} 
0 & \text{if } i + j < e, \\
d & \text{if } i + j \geq e,
\end{cases}
\quad \text{where } d = ex \in \ker \varepsilon.
\]
\end{lemma}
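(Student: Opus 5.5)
We have to prove the standard fact (Exercise 11.1.5 of \cite{robin}). The plan is to compute the cocycle directly from its defining relation
\[
\tau(u)+\tau(v)=\mu(g(u,v))+\tau(u+v), \qquad u,v\in G,
\]
written additively. Here $E$ is abelian, or at least $D$ is abelian and the relevant elements are powers of the single element $x$, which commute with each other. First we check that $\tau$ is a genuine transversal. The elements $ia$ with $0\le i<e$ are exactly the distinct elements of $G$, and $\varepsilon(ix)=ia$. So $\tau$ is well defined, satisfies $\varepsilon\circ\tau=\mathrm{id}_G$, and has $\tau(0)=0$. Next, $d=ex$ lies in $\ker\varepsilon=\mu(D)$ because $\varepsilon(ex)=ea=0$; we identify $d$ with its preimage in $D$.

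Now fix $0\le i,j<e$; the only thing to do is split into two cases according to where $(i+j)a$ falls among the representatives.

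\emph{Case $i+j<e$.} Here $(i+j)a$ has representative index $i+j$, so $\tau((i+j)a)=(i+j)x$. Since $ix+jx=(i+j)x$ (all are multiples of the same $x$), we get $g(ia,ja)=0$.

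\emph{Case $i+j\ge e$.} Since $i+j<2e$, the representative index of $(i+j)a$ is $i+j-e$, so $\tau((i+j)a)=(i+j-e)x$. Then $ix+jx=(i+j)x=ex+(i+j-e)x=d+\tau((i+j)a)$, which gives $g(ia,ja)=d$.

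There is no real obstacle; the only point to be careful about is the convention for the cocycle (the order of factors and the sign). If the cocycle convention uses conjugation or the opposite order, the multiples of $x$ still commute, so the same computation goes through.
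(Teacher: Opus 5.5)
The paper gives no proof of this lemma; it only cites it as Exercise 11.1.5 of Robinson. Your direct verification from the defining identity $\tau(u)+\tau(v)=\mu(g(u,v))+\tau(u+v)$ is the routine computation the exercise intends, and it is correct: you check that $\tau$ is a transversal, that $d=ex\in\ker\varepsilon$, and you split into the cases $i+j<e$ and $e\le i+j<2e$.

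One small correction to your closing remark. Reversing the order of the factors is harmless, because everything involved is a multiple of $x$. Flipping the sign convention is not harmless. With $g(u,v)=\tau(u+v)-\tau(u)-\tau(v)$, the case $i+j\ge e$ gives $-d$ rather than $d$. So the stated formula presupposes exactly the convention you used, and you should say so rather than claim the computation is convention-independent.
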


\begin{lemma}\label{lem:mono} Let $B=(A(R),B_0,f,g,h)$, and let $(I,\D)$ be a non-principal ultrafilter such that $B_0^*\cong (A(R)_0)^*$. Then, for every natural number $e\geq 2$, there exists a natural number $d$ that is relatively prime to $e$, and a monomorphism of rings $\theta: A(R)^*\to B^*$ such that $[B^*:\theta(A(R)^*)]=d$.\end{lemma}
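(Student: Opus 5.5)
The plan is to imitate, inside the ultrapower, the mechanism behind the criterion ``injective homomorphism $f:A\to B$ of finite index prime to $k$'' from the MOS theorem. In $B=(A(R),B_0,f,g,h)$ the deformation cocycles only affect the additive structure over $K$: the ring product is pulled back from $\widehat{A(R)}$ into $\Delta$. Concretely, we compare $A(R)^*$ and $B^*$ layer by layer along the filtration $\Delta\subseteq K\subseteq L\subseteq$ (whole ring). On $\Delta$ and on the quotient by $\Ann$ the two rings agree (Lemma~\ref{lem:6term}, together with the preceding lemma $\Ann(B^*)=\Ann(B)^*$, $\Delta(B^*)=\Delta(B)^*$). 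So a ring monomorphism $\theta$ is determined by an additive monomorphism that is the identity on $\Delta(A(R))^*$ and induces the identity on $\widehat{A(R)}^*$. Any such additive map is automatically multiplicative, because the product only depends on classes modulo the annihilator and lands in $\Delta$.

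The first step is to build $\theta$ on $K$. Using the hypothesis $B_0^*\cong (A(R)_0)^*$, we fix an isomorphism of the additions. In $A(R)$ the annihilator is split, $O\oplus A(R)_0$; in $B$ it is the extension of $B_0$ by $O$ given by $h$. Passing to the ultrapower, $h^*$ takes values in the torsion-free part $F^*$, and $\Ext$-classes of $B_0^*$ by $F^*$ can be killed after multiplying by a suitable integer. I would therefore define $\theta$ on $A(R)_0^*$ by sending a basis-like system to lifts in $\Ann(B^*)$ of the image in $B_0^*$. Here I intend to use that $B_0\equiv A_0$ is elementarily free, so $B_0^*/dB_0^*\cong A_0/dA_0$ for all $d$; this is the ``linear independence modulo $d$'' condition imposed on $g$ in Subsection~\ref{subsec:cocycles}. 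This gives $\theta:K(A(R))^*\to K(B^*)$, injective, with image of index controlled by $d$ on the $B_0$-part.

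The second step extends $\theta$ over $M$ and $N$. Since $M(A(R))$ is a free $R$-module, the $f$-part of the cocycle splits once we choose lifts, and $\theta$ extends by sending chosen lifts of an $R$-basis to lifts in $B^*$; this costs no index. The $N$-part is the crux. By Lemma~\ref{cocycle:lem}, on each cyclic summand $\langle \bar a_i\rangle$ of $N$ of order $e_i$ the extension cocycle is determined by the element $e_ia_i$, respectively $e_ib_i$, in $K$. We need $\theta(e_ia_i)=e_ib_i'$ for some lift $b_i'$. This is solvable exactly when the discrepancy $e_ib_i-\theta(e_ia_i)$, which lies in the $B_0$-direction of $K(B^*)/\Delta$, is divisible by $e_i$. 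That is not guaranteed, so instead we allow $\theta$ on the $A_0$-part to differ from an isomorphism by multiplication by an integer $d$.

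The main obstacle is choosing $d$ coprime to the given $e$ (we take $e$ to be a multiple of $|N|$) while the images $e_i b_i+\Delta$ remain independent modulo primes dividing $e$. This is where the first-order constraint on $g$ is used. Since the images are independent in $B_0^*/pB_0^*$ for each $p\mid e$, we can complete them to a system mapping onto $B_0^*/e^\infty$-adically. The remaining obstruction then lives only at primes not dividing $e$, and it is absorbed by a finite index $d$ coprime to $e$. I would first try a Chinese-remainder/Smith normal form argument on the finitely many relevant vectors, carried out inside the ultrapower via {\L}o\'s. Once $\theta$ is defined on all three layers, injectivity follows from the five lemma applied to each diagram. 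The index $[B^*:\theta(A(R)^*)]$ equals the index on the $B_0$-layer, namely $d$.
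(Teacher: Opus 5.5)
\textbf{There is a genuine gap at the crux.} Your architecture matches the paper's. You make $\theta$ the identity on $\Delta$ and induce $\phi$ on $\widehat{A(R)}^*$, so multiplicativity is automatic, as you note. You dispose of the $h$- and $f$-layers. You put the whole index into the addition by sending $b_i=e_ia_i$ to $e_iu_i$; this is the paper's $\theta=\phi_e+\psi_d$ with $e_iu_i\equiv d_iv_i$. But the crux, that this index is \emph{finite} and prime to $e$, is never established. Independence of the $e_iu_i+\Delta$ modulo the primes dividing $e$ controls only those primes. Let $W\in M_n(R^*)$ be the coordinate matrix of the $e_iu_i$ in an $R^*$-basis $v_1,\dots,v_n$ of $B_0^*\cong (R^*)^n$. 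Then the map $b_i\mapsto e_iu_i$ has cokernel of size $|R^*/\det(W)R^*|$, which is infinite whenever $\det W$ is nonstandard. For example, $\det W=1+e\omega$ with $\omega$ nonstandard is prime to every $p\mid e$. A Smith normal form over $R^*$ only diagonalizes $W$, with possibly nonstandard invariant factors. {\L}o\'s cannot transfer ``index $d$'', since $\theta$ is not definable.

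The missing step is the paper's reduction of $g^*$. We have $\Ext(\Z/e_i\Z,(R^*)^n)\cong (R^*)^n/e_i(R^*)^n\cong \Z^n/e_i\Z^n$, and the cocycle is determined by $e_iu_i$ (Lemma~\ref{cocycle:lem}). So you may change each $u_i$ by an element of $Ann(B^*)$ so that the coordinates of $e_iu_i$ become standard integers, congruent mod $e$ to the old ones. The first-order independence condition then makes the integer determinant $d$ prime to $e$. The $R^*$-linear map $b_i\mapsto e_iu_i$ is then injective with index exactly $d$.

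\textbf{Your reasons for disposing of $h$ and $f$ are not valid.} For torsion-free $X$, $\Ext(X,C)$ is divisible, so there is no reason a class should die after multiplication by an integer. Doing so would also put uncontrolled primes into the index. $M(A(R))\cong R^k$ is free over $R$ but not free abelian. Moreover, $f$ is only an abelian-group cocycle with values in $Ann(B)$, which is not an $R$-module. So lifting an $R$-basis gives no additive section. The same objection applies to defining $\theta$ on $A(R)_0^*\cong (R^*)^n$ by ``sending a basis-like system to lifts''.

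What the paper uses, and what your plan never invokes, is saturation. The groups $O(A(R))^*$, $Ann(B^*)$ and $K(B^*)$ are $\aleph_1$-saturated, hence algebraically compact (pure-injective). Since $B_0^*$ and $M(A(R))^*$ are torsion-free, $\Ext(B_0^*,O(A(R))^*)=0$ and $\Ext(M(A(R))^*,K(B^*))=0$. Thus $h^*$ and $f^*$ are coboundaries, and those layers split at no cost to the index. The map $\psi_d$ is then defined $R^*$-linearly, using the module structure transported along $B_0^*\cong A(R)_0^*$. With these two ingredients supplied, your layer-by-layer outline goes through.
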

\begin{proof} 
Consider the sequence of ideals in $A$: $$0\leq O(A) \leq Ann(A) \leq K(A) \leq L(A) \leq A$$
Since $A$ is FDZ, there are natural numbers $$m_0=0\leq m_1 \leq m_2 \leq m_3\leq m_4 \leq m_5=m$$ and elements $x_1,x_2, \ldots, x_m$ such that $\{x_{m_i+1}, \ldots, x_{m_{i+1}}\}$ fall in the $i$'th gap in that sequence, $i=1,\ldots 5$, and their images modulo the $i-1$'th ideal in the gap provide the primary factor decomposition for the corresponding quotients of the ideals. For example, $x_{m_1+1}+O(A), \ldots , x_{m_2}+O(A)$ produces a primary factor decomposition for $Ann(A)/O(A)\cong A_0$. Since this specific gap is special in what is coming, we set
$$n=m_2-m_1$$
For simplicity, we will also assume 
$$n=m_4-m_3$$
meaning that in the primary factor decomposition for $L(A)/K(A)$ we have $n$-terms, each cyclic factor of order $e_i$, $i=1,\ldots n$. We can make this assumption since $K(A)/\Delta(A) \cong L(A)/\Delta(A)\cong \Z^n$. So, in general, we  might assume some $e_i=1$. From the setup of the lemma, we can assume that the same sequence will generate $A(R)$ as an $R$-module while the cyclic factors are cyclic $R$-modules. If a cyclic factor is finite, as explained previously, it will remain the same since $R\equiv \Z$. Again, since we shall work with the above special gap frequently in the sequel, we pick nicknames for those $x_i$, and refer to the elements freely by either name when it suits the purpose. Set:
\begin{itemize}
    \item $a_i= x_{m_1+i}$, $i=1,\ldots ,n$
    \item $b_i=x_{m_3+i}$, $i=1, \ldots, n$
\end{itemize}
By the structure theorem for finitely generated abelian groups, we can assume that
$$e_ia_i \equiv  b_i, (\text{mod~} \Delta(A_i))$$

Note that $B^*\cong (A(R^*),B_0^*,f^*,g^*,h^*)$, where each of $f^*$, $g^*$, and $h^*$ is induced by $h$, $g$, and $h$, respectively. Since $M(A(R))^*$ and $B_0^*$ are torsion-free and saturated, $\Ext(B_0,O(R)^*)=0$; this, in turn, implies that $\Ext(A_1^*,D^*)=0$. This means that both $f^*$ and $h^*$ are coboundaries. Thus, we assume that they are trivial. We recall the discussion in Subsection~\ref{subsec:cocycles}. Let $g^*\in S^2(N, (R^*)^n)$. Let $N=G_1\oplus\cdots \oplus G_n$ be the primary decomposition of $N$, where $G_i\cong \Z/e_i\Z$. By~(\cite{fuchs}, Theorem 52.2), there is a short exact sequence:
$$0\to(R^*)^n\xrightarrow{e_i} (R^*)^n \to \Ext(\Z/e_i\Z,(R^*)^n)\to 0$$
Where the map $e_i$ is just multiplication by $e_i$, implying the existence of natural isomorphisms
$$\Ext(\Z/e_i\Z, (R^*)^n)\cong (R^*)^n/e_i(R^*)^n\cong \Z^n/e_i\Z^n$$
In view of the above and Lemma~\ref{cocycle:lem}, the cocycle $g^*$ is cohomologous to a cocycle defined under the diagonal embedding of $\Z\to R\to R^*$. Therefore, the following holds in $B$ and $B^*$:
\begin{enumerate}
    \item there are elements $u_i \in L(B)$ such that, where $u_i+\Delta(B^*)$ is a $\Z^*$-basis of $L(B^*)/\Delta(B^*)$
    \item The elements $e_iu_i+\Delta(B^*)$ lie in $B_0^*$, We observe that some $e_i$'s could be 1.
    \item There is a $\Z^*$ basis $v_1, \ldots,  v_n$ of $B_0^*$ and numbers $d_1, \ldots, d_n$, such that $d=d_1\cdots d_n$ is relatively prime to $e$ and $e_iu_i=d_iv_i$ (mod $\Delta(B^*)$) for each $i$.
    \end{enumerate}
Going back to the elements $x_1, \ldots, x_m$ we picked in $A(R)$, we can select a similar set of elements $y_1,\ldots , y_m$ in $B^*$, where $u_i$'s function similarly to $a_i$'s and the $v_i$ will be similar to the $b_i$.  
    
Since $h^*$ is a coboundary, there is a subgroup of $K(B^*)$ isomorphic to $B_0^*$ that splits from $K(B^*)$. We call it, for simplicity, again by $B_0^*$. Hence, $K(B^*)\cong \Delta(B^*) \times B_0^*$. It is immediately clear that there exists an isomorphism $(A(R)^*)/(A(R)^*_0)\to B^*/B_0^*$ restricting to an isomorphism that takes the canonical image of $\Delta(A(R)^*)$ onto that of $\Delta(A(R)^*)$ and maps $a_i+\Delta(A(R)^*) \mapsto u_i+\Delta(B^*) $. Pick a transversal of the canonical homomorphism $A(R)^* \to A(R)^*/(A(R)^*_0)$ that corresponds to the choice of $a_i$ above and remains the identity on $\Delta(A(R)^*)$. Let $A(R)^*_f$ be the subring of $A(R^*)$ generated by this transversal. Clearly, $A(R)^*_f$ and $A(R)^*_0$ generate $A(R)^*$. Similarly, consider a transversal of $B^* \to B^*/B^*_0$ that corresponds to the choice of $u_i$ above and remains the identity on $\Delta(B^*)$. Let $B^*_f$ be the subring of $B^*$ generated by this transversal. Define a map $\phi_e:A(R)^*_f\to B^*_f$, extending $x_i\mapsto y_i$ to an $R^*$-module map, $i\neq m_3+1, \ldots m_4$, that is, $\phi_e$ induces our given isomorphism  $(A(R)^*)/(A(R)^*_0)\to B^*/B_0^*$ and also $\phi_e(a_i)=u_i$ for all $i=1,\ldots n$. Since the cocycle $g^*$ defines $(B^*)_f$ as an extension of $(B_0^*)\cap B^*_f$ by $B^*/B_0^*$, the above is, in fact, a homomorphism. It is also 1-1 and onto by construction. Next, define $\psi_d:A(R)^*_0 \to B_0^*$ as a $\Z^*$-module monomorphism that extends to $b_i\to d_iv_i$. Note that $A(R)^*=A(R)^*_f + A(R)_0^*$ and $B_0^*= B_f^*+ B_0^*$. Therefore, an element $z\in A(R)^*$ can be written as $z=z_1+z_2$, $z_1\in A(R)^*_f$, $z_2\in A(R)_0^*$. Now, define 
$$ \theta: A(R)^* \to B^*, \quad \theta(z_1+z_2)= \phi_e(z_1)+\psi_d(z_2)$$
We need to check that $\theta$ is well-defined, which in this case requires us to check whether $\phi_e$ and $\psi_d$ agree on $B_f^*\cap B_0^*$. Elements in the intersection, if they exist, are generated by $e_ia_i=b_i$ in the case they belong to $A(R)_0^*$. If $e_ia_i\in A(R)^*_0$, then  $$\phi_e(e_ia_i)= e_iu_i=d_iv_i=\psi_d(b_i).$$  
The image of $\theta$ has index $d=d_1\cdots d_n$ in $B^*$. In terms of the elements $x_1,\ldots, x_m$ and $y_1,\ldots, y_m$ described earlier, 
$\theta(x_j)=y_j$, unless $j=m_3+i$, $i=1, \ldots , n$, in which case $x_j=b_i$, $y_j=v_i$, and $\theta(b_i)=d_iv_i$.
    \end{proof}
    The following lemma is a classical result implied by the structure theory of saturated abelian groups. See~\cite{eklof} for a reference. 
    \begin{lemma}\label{satabelian:lem} Assume $A\cong A_1 \oplus \cdots \oplus A_n$, where each $A_i$ is a cyclic module over $R$ and $R$ is a saturated model of $\Z$. Let $v_1, \ldots ,v_n$ be the corresponding basis. Let $\alpha_i$, $i=1, \ldots, n$ be elements of $R$ such that $p\nmid \alpha_i$ for any prime number $p$, where $|$ denotes the division in the ring $R$. Then, there exists an automorphism of $A$ as an abelian group that extends to $v_i\mapsto \alpha_iv_i$. \end{lemma}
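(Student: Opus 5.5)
The plan is to reduce the statement to a question about how large the automorphism group of a saturated $\aleph_1$-saturated abelian group is. Write $A=\bigoplus_{i=1}^n Rv_i$, where each summand $Rv_i$ is either free, isomorphic to $R$, or finite, isomorphic to $R/d_iR\cong \Z/d_i\Z$. The automorphism will be built one summand at a time: on $Rv_i$ I will construct an additive automorphism $\sigma_i$ with $\sigma_i(v_i)=\alpha_iv_i$, and then take $\sigma=\bigoplus_i\sigma_i$. A direct sum of automorphisms is an automorphism, so it suffices to treat a single cyclic $R$-module.

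The finite summands come first. There $Rv_i\cong\Z/d_i\Z$, and $\alpha_i$ acts through its residue class modulo $d_i$. Every prime $p\mid d_i$ satisfies $p\nmid\alpha_i$, so this residue is a unit in $\Z/d_i\Z$. Multiplication by $\alpha_i$ is therefore an automorphism of the summand, and in fact an $R$-module automorphism.

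The free summands $Rv_i\cong R^+$ are the main step. Multiplication by $\alpha_i$ is injective but need not be surjective: for a nonstandard $\alpha_i$, the map $R\to R$, $x\mapsto\alpha_ix$, is never onto. So we must leave $R$-linearity and use the abelian group structure, in the form the paper already used in the proof of Theorem~\ref{thm:main2}. Because $R$ is saturated (in particular $\aleph_1$-saturated), there is a decomposition $R^+\cong\widehat{\Z}'\oplus\Q^{(\kappa)}$. Here the reduced part is the pure-injective hull of $\Z$, namely $\prod_p\Z_p$ up to saturation; this is the structure theory of \cite{eklof}. The key observation is that $\alpha_i$ is divisible by no standard prime, so for every $p$ it is a unit in $\Z_p$. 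Multiplication by $\alpha_i$ therefore induces an automorphism of each $R/p^kR\cong\Z/p^k\Z$, compatibly in $k$. Hence it induces an automorphism of the reduced pure-injective part, i.e. of the $\widehat{\Z}$-completion of the pure closure of $\langle 1\rangle$.

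The plan for constructing $\sigma_i$ is then:
\begin{enumerate}
\item Decompose $R^+=C\oplus Q$, where $C$ is the pure-injective hull (inside $R$) of the pure subgroup $\langle \alpha_i,1\rangle_*$ generated by $1$ and $\alpha_i$, and $Q$ is divisible.
\item Define $\sigma_i$ on $C$ so that it extends $1\mapsto\alpha_i$. To see this is possible, note that $\langle 1\rangle$ and $\langle\alpha_i\rangle$ are pure with the same type: both have $p$-height $0$ for every $p$, and for all $p,k$, $p^k\mid m$ iff $p^k\mid m\alpha_i$. So $1\mapsto\alpha_i$ is a partial isomorphism between pure subgroups realizing the same pp-type. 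By saturation and the uniqueness of pure-injective hulls, it extends to an isomorphism between the hulls of $\langle1\rangle$ and $\langle\alpha_i\rangle$, and then, by a back-and-forth argument, to an automorphism of $R^+$.
\item Verify that $\sigma_i$ is additive and bijective.
\end{enumerate}
The cleanest formulation of steps 1 and 2 is model-theoretic. The abelian group $R^+$ is saturated of cardinality $|R|$, and $\operatorname{tp}(1)=\operatorname{tp}(\alpha_i)$ in the pure language of abelian groups, because pp-types of single elements in $\Z$-like groups are determined by the divisibility conditions $p^k\mid x$ together with the relations $mx=0$. By quantifier elimination to pp-formulas for modules, the complete type is determined by these. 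Saturation (homogeneity) then gives an automorphism of $R^+$ sending $1$ to $\alpha_i$.

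The expected main obstacle is the type-equality step. We need that in a model of $\Th(\Z,+)$ an element's complete type in the additive language is determined by which $p^k$ divide it. This follows from the Baur–Monk pp-elimination, since the relevant invariants are pp-formulas $\exists y\,(p^ky=x)$ together with torsion conditions, and $R^+$ is torsion-free. A second point to check is that "saturated model of $\Z$" gives a saturated, hence strongly homogeneous, reduct $R^+$. The reduct of a saturated structure is saturated in the same cardinality, and that suffices. Doing this simultaneously for all $i$, or applying homogeneity to the tuple $(v_1,\dots,v_n)\mapsto(\alpha_1v_1,\dots,\alpha_nv_n)$ directly with componentwise type equality, finishes the argument.
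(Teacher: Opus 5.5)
Your proof is correct, but it takes a different route from the paper. The paper argues structurally. It decomposes the saturated group $R^+$ into a divisible part and a reduced algebraically compact part, then extends $1\mapsto\alpha$ summand by summand, implicitly multiplying by the image of $\alpha$ in each $\Z_p$, which is a $p$-adic unit.

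You instead show $\mathrm{tp}(1)=\mathrm{tp}(\alpha_i)$ in $(R,+)$ via Baur--Monk: in a torsion-free group, one-variable pp-formulas reduce to conditions $n\mid x$ and $x=0$, and neither element is divisible by any $n\geq 2$. You then invoke strong homogeneity of the saturated reduct. This needs no structure theory and avoids bookkeeping that the paper's sketch glosses over. The reduced part is really $\prod_p\Z_p$, one copy per prime since $R/pR\cong\Z/p\Z$, not a direct sum. Also, $1$ and $\alpha$ can have different components in the divisible part, so one has to choose complements $R^+=D\oplus C=D\oplus C'$ with $1\in C$ and $\alpha\in C'$. You also handle the finite cyclic summands, which the paper's proof omits.

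What the structural route buys is that it needs only algebraic compactness, i.e.\ $\aleph_1$-saturation. Your homogeneity step uses saturation in cardinality $|R|$. This matters in the paper's final theorem, where the lemma is applied to an ultrapower over $\N$. Such an ultrapower is only guaranteed to be $\aleph_1$-saturated; it is saturated under CH.

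Your steps 1--2 already contain the fix. Since $R/pR\cong\Z/p\Z$ is generated both by the image of $1$ and by that of $\alpha_i$, one gets $R^+=H(1)\oplus D=H(\alpha_i)\oplus D$, where $D$ is the divisible part. So the hull isomorphism over $1\mapsto\alpha_i$, extended by the identity on $D$, is the required automorphism with no back-and-forth needed.

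Finally, your reading of the hypothesis (summands isomorphic to $R$ or to $\Z/d\Z$ with $d$ standard) is forced, not merely convenient. Take $A_i=R/\beta R$ with $\beta$ nonstandard and divisible by no standard prime. Then $\alpha_i=\beta$ satisfies the hypothesis, but $\alpha_iv_i=0$.
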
 
\begin{proof} Since $R^+$ is a saturated abelian group, then $$\displaystyle  R^+\cong \Q^\beta \oplus \sum_{p\in \pi} (\Z_p)^{\alpha_p},$$where $\Q$ is the additive group of rationals, $\Z_p$ is the additive group of $p$-adic integers, $\pi$ is the set of all primes, and the cardinals $\beta$ and $\alpha_p$ depend on $R^+$. Clearly, if the lemma condition applies to $\alpha$, then $1\mapsto \alpha$ can be extended to an automorphism of every summand. This is not necessarily an $R$-module automorphism.\end{proof} 

\begin{theorem} Assume $B=(A(R),B_0,f,g,h)$, where $B_0\cong A(R)_0$. Then for any $\aleph_1$-incomplete ultrafilter $(\N,D)$, $B^*\cong A(R)^*$.    
\end{theorem}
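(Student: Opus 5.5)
The plan is to combine Lemma~\ref{lem:mono} with Lemma~\ref{satabelian:lem}. The monomorphism of Lemma~\ref{lem:mono} has finite index $d$, and I will correct it into an isomorphism by composing with an abelian-group automorphism of $B_0^*$ that rescales the basis vectors $v_i$ by the integers $d_i$. First I pass to the ultrapower over an $\aleph_1$-incomplete ultrafilter $D$ on $\N$. Since $\Th(\Z)$ is countable, $R^*=R^{\N}/D$ is $\aleph_1$-saturated, and so are $B_0^*$, $M(A(R))^*$ and $O(A(R))^*$.

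As in the proof of Lemma~\ref{lem:mono}, saturation forces $\Ext(B_0^*,O(A(R))^*)=0$ and $\Ext(M(A(R))^*,D^*)=0$. Hence $f^*$ and $h^*$ are coboundaries and may be taken trivial, and $K(B^*)\cong \Delta(B^*)\oplus B_0^*$. The hypothesis $B_0\cong A(R)_0$ gives $B_0^*\cong (A(R)_0)^*$, so Lemma~\ref{lem:mono} applies. Fix any $e\geq 2$, for instance $e=\prod_i e_i$, the product of the orders of the cyclic factors of $N$. This yields $\theta\colon A(R)^*\to B^*$ with $\theta(x_j)=y_j$ for $j\notin\{m_3+1,\dots,m_4\}$ and $\theta(b_i)=d_iv_i$, where $\gcd(d,e)=1$.

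Next I modify $\theta$ on the addition. The image of $\theta$ misses exactly the cosets of $d_iv_i$ in $\langle v_i\rangle$. I will choose an additive automorphism $\sigma$ of $B_0^*$ with $\sigma(v_i)=d_i^{-1}$-adjusted, i.e.\ an automorphism sending $d_iv_i\mapsto v_i$. Equivalently, I apply Lemma~\ref{satabelian:lem} to $v_i\mapsto \alpha_i v_i$, choosing $\alpha_i\in R^*$ with $\alpha_i d_i\equiv 1$ in each $\Z_p$-component for $p\nmid d_i$. For $p\mid d_i$, I arrange invertibility on the divisible and $p$-adic summands separately, using that $R^{*+}\cong\Q^\beta\oplus\prod_p\Z_p^{\alpha_p}$. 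Then I set $\theta'=(\mathrm{id}_{\Delta}\oplus\sigma)\circ\theta$ on $K$, with $\theta'=\phi_e$ on the foundation part. Since $B_0^*\subseteq Ann(B^*)$, any additive map on $B_0^*$ that is compatible with the rest is automatically multiplicative, because products land in $\Delta(B^*)$ and involve no addition elements. So ring-homomorphism status only requires compatibility on $B_f^*\cap B_0^*$, namely $\sigma(d_iv_i)=e_iu_i$ modulo $\Delta$. I would fix this by instead redefining $\phi_e(a_i)$ as an element $u_i'$ with $e_iu_i'=v_i$ modulo $\Delta$. This is possible because the $g^*$ relation can be renormalized: $\gcd(d_i,e_i)=1$, so $u_i'=su_i+tv_i$ with $sd_i+te_i=1$ works after adjusting the basis of $L(B^*)/\Delta(B^*)$.

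The main obstacle is this last step. Lemma~\ref{satabelian:lem} as stated excludes primes dividing $d_i$, so rescaling by $d_i^{-1}$ is not literally available. I will therefore avoid needing $\sigma$ at all and use the B\'ezout renormalization $u_i'=su_i+tv_i$ directly. Then $e_iu_i'\equiv s e_i u_i+te_iv_i\equiv (sd_i+te_i)v_i=v_i$, which makes the modified $\theta$ hit $v_i$ itself and hence surjective. Injectivity survives because $\{u_i',v_i\}$ spans the same $R^*$-lattice modulo $\Delta$; this holds since the change-of-basis matrix has determinant $\pm1$ after using that $\langle u_i,v_i\rangle$ mod $\Delta$ has relation $e_iu_i=d_iv_i$ with coprime coefficients. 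I would check carefully that this $\Z$-relation makes $\langle u_i,v_i\rangle/\Delta$ cyclic over $R^*$, which is where $\aleph_1$-saturation and $\gcd(d_i,e_i)=1$ are both used.
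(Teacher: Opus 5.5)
The gap is in the B\'ezout renormalization, and what fails there is multiplicativity, not bijectivity. In Lemma~\ref{lem:mono}, $u_i=\theta(a_i)$ is a lift of $\phi(\hat a_i)$, where $\phi\colon\widehat{A(R)^*}\to\widehat{B^*}$ is the ring isomorphism. This is what makes $\theta$ multiplicative. The changes of $\theta(a_i)$ that keep the induced map $\phi$ are exactly the annihilator shifts $u_i+c$ with $c\in Ann(B^*)$.

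Your $u_i'=su_i+tv_i$ is not such a shift: its image in $\widehat{B^*}$ is $s\,\phi(\hat a_i)$. Your modified map $\theta'$ agrees with $\theta$ on $\Delta(A(R)^*)$ and on every generator $x$ you did not touch. So $\theta'(a_i)\theta'(x)=s\,\theta(a_ix)$, while $\theta'(a_ix)=\theta(a_ix)$. Hence $\theta'$ is a ring homomorphism only if $(s-1)a_ix=0$ for all such $x$. This fails in general: $sd_i\equiv 1 \pmod{e_i}$ forces $s\neq 1$ unless $d_i\equiv 1\pmod{e_i}$, and $a_i\notin Ann(A)$.

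The checks you plan (same lattice modulo $\Delta$, unimodular change of basis) only see the additive structure. There, indeed, $\langle u_i,v_i\rangle/\Delta=\langle u_i'\rangle$, so they cannot detect the problem. There is also a warning sign: the B\'ezout step uses no saturation. If it were valid, it would give $B\cong A(R)$ with no ultrapower at all, and for $R=\Z$ it would make every non-regular FDZ ring first-order rigid among FDZ rings, which is false in general. Your first idea, an additive automorphism with $d_iv_i\mapsto v_i$, is not merely unsupported by Lemma~\ref{satabelian:lem}; it is impossible, since automorphisms preserve divisibility by $d_i$.

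The missing idea is the right way to use Lemma~\ref{satabelian:lem}. Feed it not $d_i^{-1}$, but $d_i$ perturbed by a nonstandard multiple of $e$, and absorb the perturbation by an annihilator shift. Set $\theta(a_i)=u_i+q_i^*\hat e_iv_i$, where $\hat e_i=e/e_i$ and $q_i^*\in\Z^*$ is the class of $(q_{ij})_j$, with $q_{ij}$ the product of the first $j$ primes not dividing $d_i$. This keeps products, and by Lemma~\ref{cocycle:lem} it keeps the class of $g^*$. It gives $e_i\theta(a_i)\equiv\alpha_iv_i\pmod{\Delta(B^*)}$ with $\alpha_i=d_i+q_i^*e$.

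Checking coordinatewise, no standard prime divides $\alpha_i$. A prime dividing $d_i$ divides neither $q_{ij}$ nor $e$, and any other prime divides $q_{ij}$ for large $j$. So Lemma~\ref{satabelian:lem} yields an additive automorphism of $B_0^*$ with $v_i\mapsto\alpha_iv_i$. Composed with the isomorphism $b_i\mapsto v_i$, this gives an additive isomorphism $A(R)^*_0\to B_0^*$ that agrees with the foundation map on the overlap. Your own remark that only additivity matters on the annihilator then makes the glued map a ring isomorphism. This is where saturation is really used, and the resulting isomorphism is necessarily not $R^*$-linear on the addition.
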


\begin{proof}
We shall use the monomorphism $\theta: A(R)\to B$ constructed in Lemma~\ref{lem:mono} to construct the claimed isomorphism. We will use the terminology and notation of Lemma~\ref{lem:mono}.  Recall that $e=e_{1}\cdots e_{n}$, $d=d_1\cdots d_n$, and gcd$(d,e)=1$. Assume $\pi$ denotes the set of all prime numbers. For each $i=1, \ldots, n$, let $\pi_i$ denote the set of all prime numbers $p$, such that $p|d_i$. Let us denote the $j$'th prime number in $\pi\setminus \pi_i$ by $p_{ij}$ and the product of the first $j$ primes in $\pi\setminus \pi_i$ by $q_{ij}$.
 
       For each $j\in \N$ and each $1\leq i \leq n$, define 
 $$w_{ij}=(d_i+q_{ij}e) v_i$$
 Now, let $w_i^*$ and $q_i^*$ denote the classes of $(w_{ij})_{j\in \N}$ and $(q_{ij})_{j\in \N}$, respectively, in $\Z^*$. Indeed $$w^*_i=(d_i+q_i^*e)v_i.$$ Set $\alpha_i=d_i+q_i^*e$ for each relevant $i$.

 Next, we claim that $\alpha_i$ satisfies the hypothesis of Lemma~\ref{satabelian:lem}; that is, no prime $p$ divides $\alpha_i=d_i+q_i^*e$ for each $i$. Pick a prime $p$. If $p\in \pi_d$, i.e., $p|d_i$ and $p|(d_i+q_{ij}e)$, then $p|q_{ij}e$, which contradicts the choice of $q_{ij}$ and the fact that gcd$(d_k,e)=1$. So for such $p$, $p\nmid (d_i+q_{ij}e)$. Now choose a prime $p\in \pi\setminus \pi_i$, i.e, $p\nmid d_{i}$. Then $p=p_{it}$ for some $t\in \N$, implying that $p$ is a factor of $q_{ij}$ for every $j\geq t$. So $p|q_{ij}e$ for every $j\geq t$. Therefore, for every such $j$, if $p|(d_i+q_{ij}e)$ then $p|d_i$, which is impossible. So for every $j\geq t$, $p\nmid d_i+q_{ij}e$. Therefore, for any prime $p$, $p\nmid (d_i+q_{i}^*e)$.
 Now we twist $\theta$ defined above into maps $\theta_j$ in such a way that the cocycle $g$ (or $g^*$) does not change modulo coboundaries; therefore, each $\theta_j$
 defines a monomorphism of $A(R)$ into $B$, but with different indices.
  
       Let $\hat{e}_i=e/e_i$. For each $j\in\N$ and each $k=1, \ldots ,m$, let
\begin{equation*}
  \theta_j(x_k)=\left\{
  \begin{array}{ll}
   y_k & \text{if } x_k\neq a_i, b_i; i=1, \ldots ,n \\
  u_i+ q_{ij}\hat{e}_iv_i& \text{if } x_k=a_i; i=1,\ldots, n\\
  d_iv_i+ q_{ij}ev_i & \text{if } x_k=b_i; i=1,\ldots, n\end{array}\right.
   \end{equation*}
 The above assignment can clearly be extended to a homomorphism from $A(R)^*$ into $B^*$. Note that here $\theta_j$ picks a transversal $u_i+B_0$ which produces a cocycle cohomologous to $g$ defined by the transversal $e_iu_i= d_iv_i$ since $d_iv_i \equiv d_iv_i+ q_{ij}ev_i$ (mod $e$) and therefore by Lemma~\ref{cocycle:lem} they define cocycles which are in the same class in $\Ext(\Z/e_i\Z, \Z^n)$. Let $\theta^*:(A(R))^*\to B^*$ be the monomorphism induced $(\theta_j)_{j\in \N}$. Now $\theta^*$ is an injective homomorphism that is also surjective by Lemma~\ref{satabelian:lem}. This finishes the proof.
\end{proof}
The above result concludes the proof of the converse part of the Characterization Theorem~\ref{thm:char}.

\section*{Acknowledgments}
This paper appears in a special volume in honor of Alexei G. Myasnikov (Miasnikov), and it is a pleasure to acknowledge how deeply his influence has shaped my mathematical life. I first met Alexei in 2003, in his McGill office, where he introduced me to the Mal'cev correspondence between rings and unitriangular groups. I was immediately captivated by the elegance of the constructions and by Alexei's gift for making ideas feel both simple and profound.

Over the years, Alexei has been my supervisor, collaborator, and friend. His originality, generosity, and unwaveringly positive attitude toward scientific and personal challenges have been a constant source of inspiration. The results of this paper are offered in that spirit as a contribution toward the broader program that Alexei helped set in motion.

\end{document}